\declaretheorem[name=Theorem,
refname={Theorem,Theorems},
Refname={Theorem,Theorems},
numberwithin=section]{theorem}
\declaretheorem[name=Lemma,
refname={Lemma,Lemmas},
Refname={Lemma,Lemmas},
sibling=theorem,
style=definition]{lemma}
\declaretheorem[name=Corollary,
refname={Corollary,Corollaries},
Refname={Corollary,Corollaries},
sibling=theorem,
style=definition]{col}
\declaretheorem[name=Example,
refname={Example,Examples},
Refname={Example,Examples},
sibling=theorem,
style=definition]{eg}
\declaretheorem[name=Proposition,
refname={Proposition,Propositions},
Refname={Proposition,Propositions},
sibling=theorem,
style=definition]{propn}
\declaretheorem[name=Definition,
refname={Definition,Definitions},
Refname={Definition,Definitions},
sibling=theorem,
style=definition]{defn}
\declaretheorem[name=Remark,
refname={Remark,Remarks},
Refname={Remark,Remarks},
sibling=theorem]{rmk}
\declaretheorem[name=Notation,
refname={Notation,Notations},
Refname={Notation,Notations},
sibling=theorem]{notation}
\newcommand{\setR}{\ensuremath{\mathbb{R}}}
\newcommand{\setN}{\ensuremath{\mathbb{N}}}
\newcommand{\setZ}{\ensuremath{\mathbb{Z}}}
\newcommand{\setQ}{\ensuremath{\mathbb{Q}}}
\newcommand{\R}{\mathbb{R}}
\newcommand{\sR}{\mathcal{R}}
\newcommand{\lM}{\mathcal{M}_L}
\newcommand{\N}{\mathbb{N}}
\newcommand{\Q}{\mathbb{Q}}
\newcommand{\bkl}{\text{ \textbackslash \ }}
\title{On a new measure on the Levi-Civita field $\sR$}
\author{Mateo Restrepo Borrero$^*$}
\address{Department of Mathematics, Universidad Nacional de Colombia, Colombia}
\email{marestrepob@unal.edu.co}
\author{Vatsal Srivastava$^*$}
\address{Department of Mathematics, Indian Institute of Technology Bombay, Mumbai, India}
\email{vatsal.sri.math@gmail.com}
\thanks{$^*$ Undergraduate student supported by the MITACS Globalink program}
\author{Khodr Shamseddine$^\dagger$}
\address{Department of Physics and Astronomy and Department of Mathematics, University of Manitoba, Winnipeg, Manitoba
R3T 2N2, Canada}
\email{khodr.shamseddine@umanitoba.ca}
\thanks{$^\dagger$ Research funded by the Natural Sciences and Engineering Council of Canada (NSERC, Grant \# RGPIN/4965-2017)}
\date{}
\begin{document}

\maketitle
\markboth{M. Restrepo Borrero, V. Srivastava, and K. Shamseddine}{On a new measure on the Levi-Civita field $\sR$}

\begin{abstract}
   The Levi-Civita field $\sR$ is the smallest non-Archimidean ordered field extension of the real numbers that is real closed and Cauchy complete in the topology induced by the order. In an earlier paper \cite{RSMNi02}, a measure was defined on $\sR$ in terms of the limit of the sums of the lengths of inner and outer covers of a set by countable unions of intervals as those inner and outer sums get closer together. That definition proved useful in developing an integration theory over $\sR$ in which the integral satisfies many of the essential properties of the Lebesgue integral of real analysis. Nevertheless, that measure theory lacks some intuitive results that one would expect in any reasonable definition for a measure; for example, the complement of a measurable set within another measurable set need not be measurable.

In this paper, we will give a characterization for the measurable sets defined in \cite{RSMNi02}. Then we will introduce the notion of an outer measure on $\sR$ and show some key properties the outer measure has. Finally, we will use the notion of outer measure to define a new measure on $\sR$ that proves to be a better generalization of the Lebesgue measure from $\R$ to $\sR$ and that leads to a family of measurable sets in $\sR$ that strictly contains the family of measurable sets from \cite{RSMNi02}, and for which most of the classic results for Lebesgue measurable sets in $\R$ hold.
\end{abstract}
\section{Introduction}

We recall that the elements of the Levi-Civita field $\mathcal{R}$ are functions
from $\ensuremath{\mathbb{Q}}$ to $\ensuremath{\mathbb{R}}$ with left-finite support (denoted by
supp). That is, for every $q\in\setQ$ there are only finitely many elements in the support that are smaller than $q$. For the further
discussion, it is convenient to introduce the following terminology.

\begin{defn}($\lambda $, $\sim $, $\approx $)
We define $\lambda: \mathcal{R}\rightarrow\setQ$ by
\[
\lambda(x)=\left\{\begin{array}{ll}
\min (\mbox{supp}(x))&\text{if }x\ne0\\
\infty&\text{if }x=0.
\end{array}\right.
\]
The minimum exists because of the
left-finiteness of supp$(x)$. Moreover, we denote the value of $x$ at $q\in\setQ$ with brackets like $x[q]$.

Given $x,y\ne 0$ in $\mathcal{R}$, we say $x\sim y$ if $\lambda (x)=\lambda (y)$; and we say $x\approx y$ if $\lambda
(x)=\lambda (y)$ and $x[\lambda (x)]=y[\lambda (y)] $.
\end{defn}

At this point, these definitions may feel somewhat arbitrary; but after having introduced an order on $\mathcal{R}$, we will see that $\lambda $
describes orders of magnitude, $\sim$ corresponds
to agreement of the order of magnitude, while $\approx$ corresponds to agreement up to infinitely small relative error.

The set $\mathcal{R}$ is endowed with formal
power series multiplication and componentwise addition, which make
it into a field \cite{shamseddinephd} in which we can isomorphically
embed the field of real numbers $\setR$  as a subfield via the map
$E :\setR\rightarrow \mathcal{R}$ defined by
\begin{equation}
E (x)[q]=\left\{
\begin{array}{ll}
x & \mbox{ if }q=0 \\
0 & \mbox{ else.}
\end{array}
\right.  \label{eq:embed}
\end{equation}

\begin{defn} (Order in $\mathcal{R}$)
Let $x,y\in\mathcal{R}$ be given. Then we say that $x>y$ (or $y<x$) if $x\ne y$ and
$(x-y)[\lambda (x-y)]>0$; and we say $x\ge y$ (or $y\le x$) if $x=y$ or $x>y$.
\end{defn}

It follows that the relation $\ge$ (or $\le$) defines a total order on $\mathcal{R}$ which makes it into an ordered field.
Note that, given $a<b$ in $\mathcal{R}$, we define the $\mathcal{R}$-interval
$[a,b] = \{x\in\mathcal{R}:a\le x\le b\}$, with the obvious adjustments in the definitions of the intervals $[a,b)$, $(a,b]$, and $(a,b)$.
Moreover, the embedding $E$ in Equation (\ref{eq:embed}) of
$\setR$ into $\mathcal{R}$ is compatible with the order.

The order leads to the definition of an ordinary absolute value on $\mathcal{R}$:
\[
 |x|=\max\{x,-x\}=\left\{\begin{array}{ll}x&\mbox{if }x\ge 0\\
 -x&\mbox{if }x<0;
 \end{array}\right.
 \]
 which induces the same topology on $\mathcal{R}$ (called the order topology or valuation topology) as that induced by the ultrametric absolute value $|\cdot|_u:\mathcal{R}\rightarrow\setR$, given by
 \[
  |x|_u=\left\{\begin{array}{ll}
  e^{-\lambda(x)}&\text{if }x\ne0\\
  0&\text{if }x=0,
  \end{array}\right.
  \]
  as was shown in \cite{rspascal05}.

We note in passing here that $\left\vert
\cdot\right\vert_u$ is a non-Archimedean valuation on $\mathcal{R}$; that is, it satisfies the following properties
\begin{enumerate}
\item $|v|_u\ge 0$ for all $v\in\mathcal{R}$ and $|v|_u=0$ if and only if $v=0$;
\item $|vw|_u=|v|_u|w|_u$ for all $v,w\in\mathcal{R}$; and
\item $|v+w|_u\le \max\{|v|_u,|w|_u\}$ for all $v,w\in\mathcal{R}$: the strong triangle inequality.
\end{enumerate}
Thus, $(\mathcal{R},|\cdot|_u)$ is a non-Archimedean valued field. Moreover, $|.|_u$ induces a metric $\Delta$ on $\mathcal{R}$ given by $\Delta(x,y)=|y-x|_u$ which satisfies the strong triangle inequality and is thus an ultrametric, making $(\sR,\Delta)$ an utrametric space.

Besides the usual order relations on $\mathcal{R}$, some other notations are
convenient.

\begin{defn} ($\ll ,\gg $)
Let $x,y\in\mathcal{R}$ be non-negative. We say $x$ is infinitely
smaller than $y$ (and write $x\ll y$) if $nx<y$ for all $n\in\setN$;
we say $x$ is infinitely larger than $y$ (and write $x\gg y$) if
$y\ll x$. If $x\ll 1,$ we say $x$ is infinitely small; if $x\gg 1$,
we say $x$ is infinitely large. Infinitely small numbers are also
called infinitesimals or differentials. Infinitely large numbers are
also called infinite. Non-negative numbers that are neither
infinitely small nor infinitely large are also called finite.
\end{defn}

\begin{defn}(The Number $d$)
\label{def:d}Let $d$ be the element of $\mathcal{R}$ given by $d[1]=1$ and $%
d[t]=0$ for $t\ne 1$.
\end{defn}

\begin{rmk}
Given $m\in\setZ$, then $d^m$ is the positive $\mathcal{R}$-number given by
\[
d^m=\left\{\begin{array}{ll}\underbrace{dd\cdots d}_{m \mbox{ times}}&\mbox{ if }m>0\\
\mbox{}&\mbox{}\\
1&\mbox{ if }m=0\\
\mbox{}&\mbox{}\\
\frac{1}{d^{-m}}&\mbox{ if }m<0
\end{array}\right. .
\]
Moreover, given a rational number $q=m/n$ (with $n\in\setN$ and $m\in \setZ$), then $d^q$ is  the positive $n$th root of $d^m$ in $\mathcal{R}$ (that is, $(d^q)^n=d^m$) and it is given by
\[
d^{q}[t]= \left\{
\begin{array}{ll}
1& \mbox{ if }t=q \\
0 & \mbox{otherwise.}
\end{array}
\right.
\]

It is easy to check that $d^q\ll 1$ if $q>0$ and $d^q\gg 1$ if
$q<0$ in $\setQ$. Moreover, for all $x\in \mathcal{R}$,
the elements of supp$(x)$ can be arranged in ascending order, say
supp$(x)=\{q_1, q_2, \ldots\}$ with $q_j<q_{j+1}$ for all $j$; and
$x$ can be written as $x=\sum\limits_{j=1}^\infty x[q_j]d^{q_j}$, where the
series converges in the order (valuation) topology \cite{rscrptsf}.
\end{rmk}

Altogether, it follows that $\mathcal{R}$ is a non-Archimedean (valued and ordered) field extension of $\setR$. For a detailed study of this field, we refer the
reader to the survey paper \cite{rsrevitaly13} and the references therein. In particular, it is
shown that $\mathcal{R}$ is complete with respect to the natural (valuation) topology or, equivalently, with respect to the ultrametric $\Delta$.

It follows therefore that $\mathcal{R}$
is just a special case of the class of fields discussed in
\cite{schikhofbook}. For a general
overview of the algebraic properties of formal power series fields, we refer to the comprehensive overview by Ribenboim \cite{ribenboim92}, and
for an overview of the related valuation theory, to the book by Krull \cite{krull32}. A thorough and complete treatment of ordered structures can
also be found in \cite{priessbook}. A more comprehensive survey of all non-Archimedean fields can be found in \cite{barria-sham-18}.

Besides being the smallest non-Archimedean ordered field extension
of the real numbers that is both complete in the order topology and
real closed, the Levi-Civita field $\mathcal{R}$ is of particular
interest because of its practical usefulness. Because of the left-finiteness of the supports of the Levi-Civita numbers, those numbers can be used on a computer, thus allowing for many useful computational applications.
One such application is the computation of derivatives
of real functions representable on a computer
\cite{rsdiffsf}, where both the accuracy of formula
manipulators and the speed of classical numerical methods are
achieved.

The following result is not special to $\mathcal{R}$ but it holds in any non-Archimedean valued field; its proof can be found in \cite{shamseddinephd,rspsio00}.
\begin{propn} Let $\{a_n\}_{n\in\N}$ be a sequence in $\mathcal{R}$. Then $\{ a_n\}$ is a Cauchy sequence in the valuation topology if and only if $\lim\limits_{n\to\infty}\left(a_{n+1}-a_n\right)=0$.
\end{propn}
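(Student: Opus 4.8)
The plan is to prove both implications directly from the definitions, with the strong triangle inequality doing all the real work in the nontrivial direction. Recall that $\{a_n\}$ is Cauchy in the valuation topology precisely when, for every real $\varepsilon>0$, there is an $N\in\N$ such that $|a_m-a_n|_u<\varepsilon$ whenever $m,n\ge N$; and that $\lim_{n\to\infty}(a_{n+1}-a_n)=0$ means $|a_{n+1}-a_n|_u\to 0$ in $\setR$.

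The forward implication is immediate and uses nothing special about $\mathcal{R}$. If $\{a_n\}$ is Cauchy, then given $\varepsilon>0$ I pick the corresponding $N$ and simply specialize the Cauchy estimate to the consecutive pair $m=n+1$, $n\ge N$, obtaining $|a_{n+1}-a_n|_u<\varepsilon$ for all $n\ge N$; hence $a_{n+1}-a_n\to 0$.

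For the reverse implication, suppose $|a_{n+1}-a_n|_u\to 0$ and let $\varepsilon>0$ be given. I would choose $N$ so that $|a_{k+1}-a_k|_u<\varepsilon$ for every $k\ge N$, and then for any $m>n\ge N$ write the telescoping identity $a_m-a_n=\sum_{k=n}^{m-1}(a_{k+1}-a_k)$. Applying the strong triangle inequality (property (3) of $|\cdot|_u$) iteratively to this finite sum yields $|a_m-a_n|_u\le\max_{n\le k\le m-1}|a_{k+1}-a_k|_u<\varepsilon$, which is exactly the Cauchy condition.

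I do not expect a genuine obstacle here: the content of the statement is entirely concentrated in the single inequality $|a_m-a_n|_u\le\max_{n\le k\le m-1}|a_{k+1}-a_k|_u$, and this is precisely the point at which the argument would break down over an Archimedean field, where one could only bound the sum by $\sum_{k=n}^{m-1}|a_{k+1}-a_k|_u$ and consecutive differences tending to zero need not force a sequence to be Cauchy, as the partial sums of the harmonic series show. The only mild care needed is the formal induction establishing the iterated strong triangle inequality for a finite sum, which I would state once and then invoke.
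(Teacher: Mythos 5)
Your proof is correct: the forward direction is the trivial specialization to consecutive indices, and the reverse direction is exactly the standard ultrametric argument, telescoping $a_m-a_n$ and applying the strong triangle inequality iteratively to get $|a_m-a_n|_u\le\max_{n\le k\le m-1}|a_{k+1}-a_k|_u$. The paper itself does not prove this proposition — it notes the result holds in any non-Archimedean valued field and cites external references — but your argument is precisely the standard proof one finds there, and your remark about where the argument would fail in the Archimedean case correctly identifies the essential point.
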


Since $\mathcal{R}$ is Cauchy complete, we readily obtain the following result.
\begin{col}\label{corseqconv} Let $\{a_n\}_{n\in\N}$ be a sequence in $\mathcal{R}$. Then $\{ a_n\}$ converges in $\mathcal{R}$
 if and only if $\lim\limits_{n\to\infty}\left(a_{n+1}-a_n\right)=0$.
\end{col}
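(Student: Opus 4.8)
The plan is to obtain this as an immediate consequence of the preceding Proposition combined with the Cauchy completeness of $\mathcal{R}$; the proof reduces to chaining two equivalences, one in each direction.

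First, for the forward implication, I would assume $\{a_n\}$ converges in $\mathcal{R}$ and invoke the standard fact that every convergent sequence in a metric space is Cauchy. In the present setting this is especially transparent: if $a_n\to L$ in the valuation topology, then since $\Delta$ is an ultrametric we have $\Delta(a_m,a_n)\le\max\{\Delta(a_m,L),\Delta(a_n,L)\}$, so the Cauchy condition follows at once from convergence. Having established that $\{a_n\}$ is Cauchy, I would then apply the Proposition to conclude $\lim_{n\to\infty}(a_{n+1}-a_n)=0$.

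For the reverse implication, I would start from the hypothesis $\lim_{n\to\infty}(a_{n+1}-a_n)=0$ and use the Proposition in the other direction to deduce that $\{a_n\}$ is Cauchy in the valuation topology. Then, invoking the completeness of $\mathcal{R}$ with respect to this topology (equivalently, with respect to $\Delta$) as recalled in the Introduction, I would conclude that $\{a_n\}$ converges in $\mathcal{R}$.

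I do not anticipate any genuine obstacle: all the substantive work was already carried out in the Proposition, and the only extra ingredients are the elementary observation that convergence implies the Cauchy property and the cited completeness of $\mathcal{R}$. Accordingly, I would expect the final write-up to occupy only a few lines.
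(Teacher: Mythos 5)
Your proposal is correct and matches the paper's intent exactly: the paper states this corollary without a written proof, presenting it as an immediate consequence of the preceding Proposition together with the Cauchy completeness of $\mathcal{R}$, which is precisely the two-step chaining you describe. Nothing is missing.
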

\begin{col}\label{corinfinitesum} Let $\{a_n\}_{n\in\N}$ be a sequence in $\mathcal{R}$. Then $\sum\limits_{n\in\N}a_n$ converges in $\mathcal{R}$
 if and only if $\lim\limits_{n\to\infty}a_n=0$.
\end{col}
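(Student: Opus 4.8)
The plan is to reduce this statement to \Cref{corseqconv} by applying it to the sequence of partial sums. By definition, the series $\sum_{n\in\N}a_n$ converges in $\sR$ precisely when its sequence of partial sums converges, so I would first set $S_N=\sum_{n=1}^{N}a_n$ and reframe the claim entirely in terms of the sequence $\{S_N\}_{N\in\N}$.

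Next I would invoke \Cref{corseqconv} with $\{S_N\}$ in place of $\{a_n\}$: that corollary tells us $\{S_N\}$ converges in $\sR$ if and only if $\lim_{N\to\infty}(S_{N+1}-S_N)=0$. The key observation is then the telescoping identity $S_{N+1}-S_N=a_{N+1}$, which holds immediately from the definition of the partial sums. Substituting this in, the convergence criterion becomes $\lim_{N\to\infty}a_{N+1}=0$, and a trivial reindexing shows this is the same condition as $\lim_{n\to\infty}a_n=0$. Chaining these equivalences yields exactly the stated biconditional.

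There is really no serious obstacle here: the result is an essentially immediate consequence of \Cref{corseqconv}, and the only thing to be careful about is that ``$\sum_{n\in\N}a_n$ converges'' is interpreted as convergence of the partial sum sequence (so that the hypothesis of \Cref{corseqconv} applies verbatim) and that the harmless index shift between $a_{N+1}$ and $a_n$ is noted. The entire content is the telescoping step $S_{N+1}-S_N=a_{N+1}$, after which the equivalence follows by transitivity.
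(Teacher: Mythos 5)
Your proposal is correct and matches the paper's intended derivation: the paper states this corollary without proof as an immediate consequence of \cref{corseqconv}, and applying that corollary to the sequence of partial sums with the telescoping identity $S_{N+1}-S_N=a_{N+1}$ is precisely the argument being left implicit.
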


Moreover, thanks to the non-Archimedean (ultrametric) nature of $\mathcal{R}$, the order of limits, including double infinite sums, can be interchanged more conveniently than in $\R$ \cite{shamseddinephd}.

\section{The S-Measure on $\mathcal{R}$}\label{secMI1D} Using the nice smoothness properties of power series (see \cite{rspssurv13} and the references therein), we developed a measure and integration theory on $\mathcal{R}$ in \cite{RSMNi02,rsint2} that uses the $\mathcal{R}$-analytic functions (functions given locally by power series) as the building blocks for measurable functions instead of the step functions used in the real case. We will refer to that measure by the S-measure henceforth in this paper.

\begin{notation} Let $a<b$ in $\mathcal{R}$ be given. Then by $l(I(a,b))$ we will denote the length of the interval $I(a,b)$, that is
\[
l(I(a,b))=\mbox{ length of }I(a,b)= b-a.
\]
\end{notation}

\begin{defn}\label{S-measureDef}
Let $A\subset\mathcal{R}$ be given. Then we say that $A$ is
S-measurable if for every $\epsilon>0$ in $\mathcal{R}$, there exist a sequence
of pairwise disjoint intervals $(I_{n})$ and a sequence of pairwise disjoint
intervals $(J_{n})$ such that $\bigcup\limits_{n=1}^{\infty}I_{n}\subset A\subset
\bigcup\limits_{n=1}^{\infty}J_{n}$, $\sum\limits_{n=1}^{\infty}l(I_{n})$ and $\sum
_{n=1}^{\infty}l(J_{n})$ converge in $\mathcal{R}$, and $\sum\limits_{n=1}^{\infty
}l(J_{n})-\sum\limits_{n=1}^{\infty}l(I_{n})\leq\epsilon$.
\end{defn}

Given an S-measurable set $A$, then for every
$k\in\ensuremath{\mathbb{N}}$, we
can select a sequence of pairwise disjoint intervals $\left(  I_{n}%
^{k}\right)  $ and a sequence of pairwise disjoint intervals $\left(
J_{n}^{k}\right)  $ such that $\sum\limits_{n=1}^{\infty}l\left(
I_{n}^{k}\right)  $ and $\sum\limits_{n=1}^{\infty}l\left(
J_{n}^{k}\right)  $ converge in $\mathcal{R}$ for all $k$,
\[
\bigcup\limits_{n=1}^{\infty}I_{n}^{k}\subset\bigcup\limits_{n=1}^{\infty}I_{n}^{k+1}\subset
A\subset\bigcup\limits_{n=1}^{\infty}J_{n}^{k+1}\subset\bigcup\limits_{n=1}^{\infty}J_{n}^{k}
\mbox{ and }\sum\limits_{n=1}^{\infty}l\left(  J_{n}^{k}\right)
-\sum\limits_{n=1}^{\infty }l\left(  I_{n}^{k}\right)  \le d^{k}
\]
for all $k\in\setN$. Since $\mathcal{R}$ is Cauchy complete in the
order (valuation) topology, it follows that $\lim\limits_{k\rightarrow\infty}\sum
_{n=1}^{\infty}l\left(  I_{n}^{k}\right)  $ and $\lim\limits_{k\rightarrow\infty}%
\sum\limits_{n=1}^{\infty}l\left(  J_{n}^{k}\right)  $ both exist and they
are equal. We call the common value of the limits the S-measure of $A$
and we denote it by $M_s(A)$. Thus,
\[
M_s(A)=\lim\limits_{k\rightarrow\infty}\sum\limits_{n=1}^{\infty}l\left(
I_{n}^{k}\right)  =
\lim\limits_{k\rightarrow\infty}\sum\limits_{n=1}^{\infty}l\left(
J_{n}^{k}\right)  .
\]
Contrary to the real case,
\[
\sup\left\{  \sum\limits_{n=1}^{\infty}l(I_{n}):I_{n}\mbox{'s are pairwise disjoint intervals}  \mbox{ and
}  \bigcup\limits_{n=1}^{\infty
}I_{n}\subset A\right\}
\]
 and
 \[
 \inf\left\{  \sum\limits_{n=1}^{\infty}l(J_{n}%
):J_{n}\mbox{'s are pairwise disjoint intervals}  \mbox{ and
} A\subset\bigcup\limits_{n=1}^{\infty}J_{n}\right\}
\]
need not exist for a
given set $A\subset\mathcal{R}$. However, as shown in \cite{RSMNi02}, if
$A$ is S-measurable then both the supremum and infimum exist
and they are equal to $M_s(A)$. This shows that the definition of S-measurable
sets in Definition \ref{S-measureDef} is a good generalization of that of the Lebesgue
measurable sets of real analysis that corrects for the lack of suprema and infima in
non-Archimedean ordered fields.

It follows directly from the definition
that $M_s(A)\ge0$ for any S-measurable set $A\subset\mathcal{R}$ and that any
interval $I(a,b)$ is S-measurable with S-measure $M_s(I(a,b))=l(I(a,b))=b-a$. It also follows
that if $A$ is a countable union of pairwise disjoint intervals $\left(
I_{n}(a_{n},b_{n})\right)  $ such that $\sum\limits_{n=1}^{\infty}(b_{n}-a_{n})$
converges then $A$ is S-measurable with $M_s(A)= \sum\limits_{n=1}^{\infty}(b_{n}-a_{n}%
)$. Moreover, if $B\subset A\subset\mathcal{R}$ and if $A$ and $B$ are
S-measurable, then $M_s(B)\le M_s(A)$.

In \cite{RSMNi02} we show that the S-measure defined on $\mathcal{R}$ above has similar
properties to those of the Lebesgue measure on $\ensuremath{\mathbb{R}}$. For example,
we show that any subset of an S-measurable
set of S-measure $0$ is itself S-measurable and has S-measure $0$. We also show that
any countable unions of S-measurable sets whose S-measures form a null sequence is
S-measurable and the S-measure of the union is less than or equal to the sum of
the S-measures of the original sets; moreover, the S-measure of the union is equal
to the sum of the S-measures of the original sets if the latter are pairwise
disjoint. Furthermore, we show that any finite intersection of S-measurable sets is also
S-measurable and that the sum of the S-measures of two S-measurable sets is equal to
the sum of the S-measures of their union and intersection.

It is worth noting that the complement of an S-measurable set in an S-measurable set
need not be S-measurable. For example, $[0,1]$ and $[0,1]\cap\ensuremath
{\mathbb{Q}}$ are both S-measurable with S-measures $1$ and $0$, respectively.
However, the complement of $[0,1]\cap\ensuremath{\mathbb{Q}}$ in $[0,1]$ is
not S-measurable. On the other hand, if $B\subset A\subset\mathcal{R}$ and if
$A$, $B$ and $A\setminus B$ are all S-measurable, then $M_s(A)=M_s(B)+M_s(A\setminus B)$.

The example of $[0,1]\setminus[0,1]\cap\ensuremath{\mathbb{Q}}$ above shows
that the axiom of choice is not needed here to construct a set that is not S-measurable,
as there are many simple examples of such sets. Indeed, any
uncountable real subset of $\mathcal{R}$, like $[0,1]\cap\ensuremath
{\mathbb{R}}$ for example, is not S-measurable. This ease of finding subsets of $\mathcal{R}$ that are not S-measurable may
 seem surprising; however, through closer inspection and the following characterization (Theorem \ref{thmcharac}), it will become obvious that the family of S-measurable sets is simply too narrow, thus the need for a new measure on $\sR$ that will extend the family of S-measurable sets and will share more of the nice properties of the Lebesgue measure on $\R$.

\begin{theorem}\label{thmcharac}
    Let $A\subset\mathcal{R}$ be S-measurable. Then $A$ can be written as a disjoint union $A=\left(\bigcup\limits_{n=1}^\infty K_n\right)\cup S$, where $K_n$ is an interval in $\mathcal{R}$ for each $n\in\N$ and where $\sum\limits_{n=1}^\infty l(K_n)=M_s(A)$ and $M_s(S)=0$.
\end{theorem}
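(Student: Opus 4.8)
The plan is to produce the intervals $K_n$ from the inner covers guaranteed by S-measurability and to absorb the leftover into a null set $S$ controlled by the outer covers. First I would fix, as in the discussion following \Cref{S-measureDef}, a nested family of inner covers $G_k=\bigcup_n I_n^k$ and outer covers $H_k=\bigcup_n J_n^k$ with $G_k\subset G_{k+1}\subset A\subset H_{k+1}\subset H_k$ and $M_s(H_k)-M_s(G_k)\le d^k$; in particular $m_k:=M_s(G_k)=\sum_n l(I_n^k)$ increases to $M_s(A)$. Set $U=\bigcup_k G_k\subset A$. Since $G_k\subset U\subset A\subset H_k$ and the gap between the inner and outer sums is at most $d^k$, which we may drive below any prescribed $\epsilon>0$, the set $U$ is itself S-measurable with $M_s(U)=M_s(A)$, directly from \Cref{S-measureDef}.

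The heart of the argument is to rewrite $U$, which is manifestly a countable union of intervals, as a countable disjoint union of intervals $\bigsqcup_n K_n$. I would do this by telescoping, $U=G_1\cup\bigcup_{k\ge 1}(G_{k+1}\setminus G_k)$, and proving the key lemma that the difference of two nested countable disjoint unions of intervals is again a countable disjoint union of intervals. For a single outer interval $I=I_n^{k+1}$ one has $I\setminus G_k=I\setminus\bigsqcup_m P_m$ with $P_m:=I\cap I_m^k$ a disjoint family of genuine subintervals whose lengths form a null sequence; the maximal order-convex pieces of this complement have endpoints that are limits of monotone sequences drawn from the countable set of endpoints of the $P_m$. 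Because these endpoint sequences are Cauchy, \Cref{corseqconv} guarantees their limits exist in $\mathcal{R}$, so each piece is a genuine interval. Collecting the resulting intervals over all $m$, $n$, and $k$ yields the family $\{K_n\}$.

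With $U=\bigsqcup_n K_n$ in hand, $\sum_n l(K_n)=M_s(U)=M_s(A)$ follows from monotonicity, since the finite partial unions sit inside $A$ and their measures $\sum_{n\le N}l(K_n)$ are squeezed between $m_k$ and $M_s(A)$; convergence of the series itself is \Cref{corinfinitesum}, once one checks that the lengths $l(K_n)$ form a null sequence, which they must since $U$ is S-measurable of finite measure. Finally, set $S=A\setminus U$. For every $k$ one has $S\subset A\setminus G_k\subset H_k\setminus G_k$, and the same difference lemma shows $H_k\setminus G_k$ is a countable disjoint union of intervals, S-measurable with $M_s(H_k\setminus G_k)=M_s(H_k)-M_s(G_k)\le d^k$. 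Thus, for any $\epsilon>0$, the empty inner cover together with the outer cover $H_k\setminus G_k$ for $k$ large enough that $d^k\le\epsilon$ witnesses that $S$ is S-measurable with $M_s(S)=0$, and $A=\bigl(\bigsqcup_n K_n\bigr)\sqcup S$ is the desired decomposition.

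The main obstacle is the disjointification lemma: in $\mathcal{R}$, which is not order-complete, the gaps left after removing a countable family of subintervals from an interval need not have endpoints a priori, so the decomposition into genuine intervals can fail at accumulation points. The crux is therefore to verify that the monotone sequences of endpoints arising there are Cauchy, so that \Cref{corseqconv} supplies the missing endpoints, and to argue that any residual accumulation sets that still fail to be intervals carry S-measure zero and may be swept into $S$ without affecting the identity $\sum_n l(K_n)=M_s(A)$.
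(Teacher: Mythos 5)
Your strategy has the right overall shape (extract intervals from nested inner covers and push the remainder into $S$), but it rests on a ``difference lemma'' that is false in $\mathcal{R}$ (and already in $\R$), and the two repairs you sketch do not close the gap. First, the endpoint sequences you invoke need not be Cauchy, so Corollary~\ref{corseqconv} does not supply the missing endpoints: remove from $(0,2)$ the disjoint intervals $P_m=\left(1-\frac{1}{m},\,1-\frac{1}{m}+d^m\right)$, whose lengths form a null sequence; the maximal order-convex piece of the complement containing $1$ is $\left\{x\in(0,2):x\geq 1\right\}\cup\left\{x\in(0,2):0<1-x\ll 1\right\}$, which has no left endpoint in $\mathcal{R}$ and is not an interval, and the candidate endpoint sequence $1-\frac{1}{m}+d^m$ is monotone but not Cauchy in the valuation topology. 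Second, even where the pieces are genuine intervals, a Cantor-type arrangement of the removed subintervals leaves an uncountable, totally disconnected residual that is not a countable union of intervals at all. Your fallback is to sweep these residuals into $S$, but the only control available on the residual produced at stage $k$ is that it sits inside a set of outer measure on the order of $d^k$; summing over $k$ bounds the swept-in part only by an infinitesimal such as $d/(1-d)$, not by $0$, and it is not even clear that this residual is S-measurable, since only subsets of sets of S-measure $0$ are automatically S-measurable. Hence neither $\sum_n l(K_n)=M_s(A)$ nor $M_s(S)=0$ follows from your argument as written.

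The step you are missing is precisely the finite truncation at the heart of the paper's proof: inside each outer interval $J_m$ one keeps only the first $N_m$ inner intervals, with $N_m$ chosen so that the discarded tail has total length less than $d^m\epsilon$. The complement of finitely many disjoint subintervals of an interval is a finite union of genuine intervals, so no completeness or accumulation pathologies can arise. The construction is then iterated: the new inner intervals at stage $k+1$ are taken inside the residual intervals $R_m^k$ of stage $k$, which makes the growing family $\{S_n^k\}$ automatically pairwise disjoint, and the final residual $S=A\setminus\bigcup_n S_n^\infty$ is covered for every $k$ by intervals of total length less than $d^k$, which forces $M_s(S)=0$ exactly. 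If you want to keep your telescoping picture, you must build this finite-truncation step into the construction of the $G_k$ before taking differences.
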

\begin{proof}
    Let $\epsilon>0$ in $\mathcal{R}$ be given. By definition, there exist two sequences of pairwise disjoint intervals $\{I_n\}$ and $\{J_n\}$ such that ${\bigcup\limits_{n=1}^\infty I_n\subseteq A\subseteq \bigcup\limits_{n=1}^\infty J_n}$,  $\sum\limits_{n=1}^\infty l(I_n)$ and $\sum\limits_{n=1}^\infty l(J_n)$ both converge in the order topology, and $\sum\limits_{n=1}^\infty l(J_n)-\sum\limits_{n=1}^\infty l(I_n)<\epsilon/2$.

We can re-write the collection $\{I_n\}_{n=1}^\infty$ as $\bigcup\limits_{m=1}^\infty\{I_n\cap J_m\}_{n=1}^\infty$. Since, for every $m\in\setN$, we have that $\lim\limits_{n\rightarrow\infty}l(I_n\cap J_m)=0$, it follows that $\sum\limits_{n=1}^\infty l(I_n\cap J_m)$ converges for every $m\in\setN$, by Corollary \ref{corinfinitesum}. Thus, there exists $N_m\in\setN$ such that $\sum\limits_{n=N_m+1}^\infty l(I_n\cap J_m)<d^m\epsilon$. It follows that
\begin{align*}
    \sum\limits_{n=1}^\infty l(J_n)-\sum\limits_{m=1}^\infty\sum\limits_{n=1}^{N_m} l(I_n\cap J_m)&\leq \sum\limits_{n=1}^\infty l(J_n)-\sum\limits_{m=1}^\infty\left[\sum\limits_{n=1}^\infty l(I_n\cap J_m)-d^m\epsilon\right]\\
    &=\sum\limits_{n=1}^\infty l(J_n)-\sum\limits_{n=1}^\infty\sum\limits_{m=1}^\infty l(I_n\cap J_m)+\sum\limits_{m=1}^\infty d^m\epsilon\\
     &=\sum\limits_{n=1}^\infty l(J_n)-\sum\limits_{n=1}^\infty l(I_n)+\sum\limits_{m=1}^\infty d^m\epsilon\\
     &<\frac{\epsilon}2+\frac{d}{1-d}\epsilon\\
     &<\epsilon.
\end{align*}
Thus, we can replace the original collections of intervals $\{I_n\}$ and $\{J_n\}$ with $\bigcup\limits_{m=1}^\infty \{J_m\cap I_n\}_{n=1}^{N_m}$ and $\{J_n\}$ which can be easily re-written as $\{S_n\}$, $\{X_n\}$ where $S_n\subseteq X_n$ for each $n$. Moreover, since $X_n\bkl S_n$ is at most the disjoint union of two intervals, we can write $\{X_n\}=\{S_n\}\cup\{R_n\}$ where $\sum\limits_{n=1}^\infty l(R_n)<\epsilon$.

Now, take $\epsilon = d$. As shown, we can find two sequences of pairwise disjoint intervals $\{S_n^1\}$ and $\{R_n^1\}$ such that
$$\bigcup\limits_{n=1}^\infty S_n^1\subseteq A \subseteq \left(\bigcup\limits_{n=1}^\infty S_n^1\right)\cup \left(\bigcup\limits_{n=1}^\infty R_n^1\right)\text{ and }\sum\limits_{n=1}^\infty l(R_n^1)<d.$$
Now, given an arbitrary $k\in\setN$, assume that for every positive integer $m\leq k$ we have a pair of sequences of pairwise disjoint intervals $\{S_n^m\}$ and $\{R_n^m\}$ such that $\bigcup\limits_{n=1}^\infty S_n^m\subseteq A \subseteq \left(\bigcup\limits_{n=1}^\infty S_n^m\right)\cup \left(\bigcup\limits_{n=1}^\infty R_n^m\right)$, $\sum\limits_{n=1}^\infty l(R_n^m)<d^m$, and $\{S_n^m\}\subseteq \{S_n^{m+1}\}$. Take now a pair of sequences of pairwise disjoint intervals $\{I_n\}$ and $\{O_n\}$ such that
$$\bigcup\limits_{n=1}^\infty I_n\subseteq A \subseteq \left(\bigcup\limits_{n=1}^\infty I_n\right)\cup \left(\bigcup\limits_{n=1}^\infty O_n\right)\text{ and }\sum\limits_{n=1}^\infty l(O_n)<d^{k+1}.$$
Consider the collections of pairwise disjoint intervals $\bigcup\limits_{m=1}^\infty\{I_n\cap R_m^k\}$ and $\bigcup\limits_{m=1}^\infty\{O_n\cap R_m^k\}$. We define
$$\{R_n^{k+1}\}:=\bigcup\limits_{m=1}^\infty\{O_n\cap R_m^k\}\text{ and }\{S_n^{k+1}\}:=\{S_n^k\}\cup\left(\bigcup\limits_{m=1}^\infty\{I_n\cap R_m^k\}\right).$$
Then $\{S_n^{k+1}\}$ and $\{R_n^{k+1}\}$ are pairwise disjoint collections of intervals that satisfy
$$\bigcup\limits_{n=1}^\infty S_n^{k+1}\subseteq A \subseteq \left(\bigcup\limits_{n=1}^\infty S_n^{k+1}\right)\cup \left(\bigcup\limits_{n=1}^\infty R_n^{k+1}\right)$$
and
\[
    \sum\limits_{n=1}^\infty l(R_n^{k+1})=\sum\limits_{n=1}^\infty\sum\limits_{m=1}^\infty l(O_n\cap R_m^k)\leq \sum\limits_{n=1}^\infty l(O_n)<d^{k+1}.
\]
We define $\{S_n^\infty\}=\bigcup\limits_{k=1}^\infty\{S_n^k\}$, which is a disjoint countable union of intervals that are contained in $A$. It follows that $\{R_n^k\}$ is a sequence of covers of $A\bkl \bigcup\limits_{n=1}^\infty S_n^\infty$ that satisfies the condition $\lim\limits_{k\to\infty}\sum\limits_{n=1}^\infty l(R_n^k)=0$. Thus,
$$\sum\limits_{n=1}^\infty l(S_n^\infty)\leq M_s(A)\leq\sum\limits_{n=1}^\infty l(S_n^\infty)+\lim\limits_{k\to\infty}\sum\limits_{n=1}^\infty l(R_n^k)=\sum\limits_{n=1}^\infty l(S_n^\infty)$$
We conclude that $A=\left(\bigcup\limits_{n=1}^\infty K_n\right)\cup S$ where $\sum\limits_{n=1}^\infty l(K_n)=M_s(A)$ and $M_s(S)=0$.
\end{proof}
\section{The outer measure}
The effect of having too small a family of S-measurable sets impedes further progress into more significant results that the reader associates with the Lebesgue measure in $\R$. So we will introduce a new definition that will enlarge the pool of measurable sets while still circumventing the fact that not all bounded sets in $\sR$ have an infimum or a supremum.
\begin{defn}
    Let $A\subset\sR$ be given. Then we say that $A$ is outer measurable if
    $$\inf \left\{\sum\limits_{n=1}^\infty l(S_n): S_n\text{'s are intervals and }A\subseteq \bigcup\limits_{n=1}^\infty S_n \right\}$$
    exists in $\sR$. If so, we call that number the outer measure of $A$ and denote it by $M_u(A)$.
\end{defn}
\subsection{General properties}
As we shall see, this definition is going to assist us in defining a larger family of measurable sets than that of the S-measurable sets. Unfortunately, working with the infimum of a set is often difficult and labour intensive. To avoid that, we will make use of a variant of the assertion made immediately after Definition \ref{S-measureDef}, which was used to define the S-measure of an S-measurable set.
\begin{propn}
Let $A\subset\sR$ be outer measurable. Then there exists a sequence of sequences of pairwise disjoint intervals $\left(\{I_n^k\}_{n=1}^\infty\right)_{k=1}^\infty$ such that $\lim\limits_{k\to\infty}\sum\limits_{n=1}^\infty l(I_n^k)=M_u(A)$, and for all $k\in\N$, we have that
$$A\subseteq\bigcup\limits_{n=1}^\infty I_n^{k+1}\subseteq\bigcup\limits_{n=1}^\infty I_n^k.$$
We say that such a sequence converges to $A$.
\end{propn}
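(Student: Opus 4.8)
The plan is to produce the required nested family in two stages: first I extract, for each $k\in\N$, a pairwise disjoint interval cover of $A$ whose total length lies within $d^k$ of $M_u(A)$, and then I force these covers to be nested by intersecting them, concluding with a squeeze argument. Throughout, the convergence criterion of Corollary \ref{corinfinitesum} (a series converges iff its terms tend to $0$) does the heavy lifting.

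First I would exploit that $M_u(A)$ is an infimum. For each $k$, the element $M_u(A)+d^k$ exceeds $M_u(A)$ and hence is not a lower bound, so there is an interval cover $\{C_n^k\}_{n=1}^\infty$ of $A$ with $\sum_{n=1}^\infty l(C_n^k)$ convergent and $M_u(A)\le\sum_{n=1}^\infty l(C_n^k)<M_u(A)+d^k$. These intervals need not be disjoint, so the next move is disjointification: setting $T_n:=C_n^k\setminus\bigcup_{i<n}C_i^k$, each $T_n$ is a finite union of pairwise disjoint subintervals of $C_n^k$ (removing one interval from an interval leaves at most two intervals, so removing $n-1$ of them leaves at most $n$), the whole collection of resulting pieces is pairwise disjoint, it still covers $\bigcup_n C_n^k\supseteq A$, and finite subadditivity of length gives $l(T_n)\le l(C_n^k)$. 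Re-enumerating all pieces as $\{U_n^k\}_{n=1}^\infty$, the lengths tend to $0$ (a piece from $C_n^k$ has length at most $l(C_n^k)$, and $l(C_n^k)\to0$ since $\sum_n l(C_n^k)$ converges), so by Corollary \ref{corinfinitesum} the sum $\sum_n l(U_n^k)$ converges, and it satisfies $\sum_n l(U_n^k)\le\sum_n l(C_n^k)<M_u(A)+d^k$.

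Next, to achieve nesting, I would intersect the covers. Put $W_k:=\bigcap_{j=1}^k\big(\bigcup_n U_n^j\big)$. Since each $\bigcup_n U_n^j$ is a disjoint union of intervals, $W_k$ is again a countable union of pairwise disjoint intervals, namely the nonempty intersections $U_{n_1}^1\cap\cdots\cap U_{n_k}^k$; it contains $A$ because every factor does, and $W_{k+1}\subseteq W_k$ by construction. Denoting by $\{I_n^k\}_{n=1}^\infty$ the intervals comprising $W_k$, we obtain $A\subseteq\bigcup_n I_n^{k+1}\subseteq\bigcup_n I_n^k$ for all $k$. For the length bound I would group the intervals of $W_k$ according to which interval $U_m^k$ of $\bigcup_n U_n^k$ contains them: these groups are disjoint subintervals of $U_m^k$, so their total length is at most $l(U_m^k)$ (the countable case follows from the finite one by passing to the limit, using that $\{x:x\le l(U_m^k)\}$ is closed in the order topology), and summing over $m$ yields $\sum_n l(I_n^k)\le\sum_n l(U_n^k)<M_u(A)+d^k$. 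On the other hand, $\{I_n^k\}$ is itself a pairwise disjoint interval cover of $A$ with convergent length sum, so $\sum_n l(I_n^k)\ge M_u(A)$.

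Finally the conclusion is a squeeze: from $0\le\sum_n l(I_n^k)-M_u(A)<d^k$ and $\lambda(d^k)=k$ one gets $\lambda\big(\sum_n l(I_n^k)-M_u(A)\big)\ge k$, so $\big|\sum_n l(I_n^k)-M_u(A)\big|_u\le e^{-k}\to0$, giving $\lim_{k\to\infty}\sum_n l(I_n^k)=M_u(A)$. I expect the main obstacle to be the disjointification step, where the non-Archimedean setting forces one to check not merely that subtracting finitely many intervals from an interval yields a finite union of intervals and that finite subadditivity of length holds, but—crucially—that the re-enumerated total length still converges in $\sR$; this is precisely where Corollary \ref{corinfinitesum} is indispensable, since boundedness of the partial sums alone does not guarantee convergence in $\sR$. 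The same subtlety reappears in the intersection step when controlling the length of $W_k$, which is handled by the grouping argument above rather than a naive term-by-term comparison.
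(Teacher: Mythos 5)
Your construction is sound in outline, and it is worth noting that the paper itself offers no proof of this proposition: it is presented as a ``variant of the assertion made immediately after Definition \ref{S-measureDef}'', which is likewise left unproved, so you are filling a genuine gap rather than reproducing an argument. The extraction of near-optimal covers from the infimum, the disjointification with the ``removing $n-1$ intervals from an interval leaves at most $n$ pieces'' count, the nesting obtained by intersecting the first $k$ covers, and the final squeeze via $0\le \sum_n l(I_n^k)-M_u(A)<d^k$ are all correct and are essentially what the paper's unproved construction for S-measurable sets implicitly relies on.

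The one step that is not yet watertight is the convergence of $\sum_n l(I_n^k)$ for the intersected cover $W_k$. You assert that the total length of the (countably many) pieces of $W_k$ lying inside a fixed $U_m^k$ is at most $l(U_m^k)$ ``by passing to the limit'', but in $\sR$ that limit need not exist a priori: countably many pairwise disjoint subintervals of $[0,1]$, e.g.\ $[1-1/n,\,1-1/(n+1))$ for $n\in\N$, have lengths $1/(n(n+1))$, which do not form a null sequence, so their total length diverges in $\sR$. Your grouping argument bounds the sum once it is known to converge; it does not by itself produce convergence. The fix is cheap: a piece $U_{n_1}^1\cap\cdots\cap U_{n_k}^k$ has length at most $\min_j l(U_{n_j}^j)$, and for each $\epsilon>0$ and each $j$ only finitely many indices $n_j$ satisfy $l(U_{n_j}^j)\ge\epsilon$ (since the lengths $l(U_n^j)$ form a null sequence in $n$); hence only finitely many tuples yield pieces of length $\ge\epsilon$, the lengths of the pieces of $W_k$ form a null sequence in any enumeration, and Corollary \ref{corinfinitesum} gives convergence, after which your closedness-of-$\{x: x\le l(U_m^k)\}$ argument legitimately supplies the upper bound. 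With that observation inserted, the proof is complete.
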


\begin{lemma}
    Let $A$, $B$ and $C$ be outer measurable sets in $\mathcal{R}$ such that $A\subseteq B\cup C$. Then $M_u(A)\leq M_u(B)+M_u(C)$.
\end{lemma}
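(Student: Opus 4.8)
The plan is to exploit that, by hypothesis, $M_u(A)$ is the infimum of the length-sums of all countable interval covers of $A$, and to manufacture covers of $A$ whose length-sums approach $M_u(B)+M_u(C)$ by simply concatenating covers of $B$ with covers of $C$. A key point that makes this painless is that, unlike the definition of the S-measure, the intervals in the definition of the outer measure are \emph{not} required to be pairwise disjoint; so gluing two covers together produces a legitimate cover even if intervals from the two families overlap. Accordingly, I would first apply the preceding Proposition to $B$ and to $C$ separately, obtaining nested sequences of covers $(\{I_n^k\}_n)_k$ of $B$ and $(\{J_n^k\}_n)_k$ of $C$ with $\lim_{k\to\infty}\sum_{n=1}^\infty l(I_n^k)=M_u(B)$ and $\lim_{k\to\infty}\sum_{n=1}^\infty l(J_n^k)=M_u(C)$, where $B\subseteq\bigcup_n I_n^{k+1}\subseteq\bigcup_n I_n^k$ and likewise for $C$.

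For each fixed $k$, I would interleave the two families into a single sequence $\{S_m^k\}_m$ (say $I_1^k,J_1^k,I_2^k,J_2^k,\dots$). Since $A\subseteq B\cup C\subseteq\left(\bigcup_n I_n^k\right)\cup\left(\bigcup_n J_n^k\right)=\bigcup_m S_m^k$, each $\{S_m^k\}$ is an interval cover of $A$. Next I would verify that the combined length-sum converges and equals the sum of the two component sums: for fixed $k$ both $\sum_n l(I_n^k)$ and $\sum_n l(J_n^k)$ converge, so $l(I_n^k)\to 0$ and $l(J_n^k)\to 0$, whence $l(S_m^k)\to 0$ as $m\to\infty$, and by Corollary \ref{corinfinitesum} the interleaved series $\sum_m l(S_m^k)$ converges. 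Its value is $\sum_n l(I_n^k)+\sum_n l(J_n^k)$, by the freedom to reindex and interchange the order of summation that the ultrametric structure provides.

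Finally, since $A$ is outer measurable, $M_u(A)$ is a lower bound for the length-sum of \emph{every} convergent interval cover of $A$; in particular $M_u(A)\leq\sum_m l(S_m^k)=\sum_n l(I_n^k)+\sum_n l(J_n^k)$ for each $k\in\N$. Letting $k\to\infty$ then yields $M_u(A)\leq M_u(B)+M_u(C)$. The step I expect to require the most care is this passage to the limit: one must confirm that $M_u(A)\leq x_k$ for all $k$ together with $x_k\to x$ forces $M_u(A)\leq x$ in the order topology of $\sR$, and that $\lim_{k\to\infty}\left(\sum_n l(I_n^k)+\sum_n l(J_n^k)\right)=M_u(B)+M_u(C)$ by continuity of addition in the valuation topology. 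These are standard facts but deserve to be stated explicitly in the non-Archimedean setting; the convergence and rearrangement of the interleaved length-sum, handled by Corollary \ref{corinfinitesum}, is the only other technical point.
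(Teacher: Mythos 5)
Your proof is correct, but it takes a different route from the paper's. The paper starts from the same observation you do --- the union of a cover of $B$ and a cover of $C$ is a cover of $A$, so $M_u(A)\leq\sum_{n=1}^\infty l(I_n)+\sum_{n=1}^\infty l(J_n)$ --- but then it never introduces limits at all: holding the cover $\{I_n\}$ of $B$ fixed, it reads the inequality as saying that $M_u(A)-\sum_{n=1}^\infty l(I_n)$ is a lower bound for the set of all cover-sums of $C$, hence is at most the infimum $M_u(C)$; rearranging, $M_u(A)-M_u(C)$ is then a lower bound for the set of all cover-sums of $B$, hence at most $M_u(B)$. This two-step "any lower bound is at most the infimum" argument uses only the definition of outer measurability and the order structure. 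Your version instead invokes the preceding Proposition to produce near-optimal nested covers of $B$ and $C$, interleaves them, and passes to the limit in $k$; this is also valid, but it carries extra overhead: you need the rearrangement/interleaving of ultrametric series (justified, as you say, by the terms tending to zero and Corollary \ref{corinfinitesum}), continuity of addition, and the fact that $M_u(A)\leq x_k$ for all $k$ with $x_k\to x$ forces $M_u(A)\leq x$ in the order (equivalently valuation) topology --- all true in $\sR$, and you rightly flag them, but none of them is needed for the paper's purely order-theoretic argument. The one thing your approach buys is that it would still function in settings where one only has approximating sequences of covers rather than a genuine infimum; here, however, outer measurability hands you the infimum for free, and the paper exploits that directly.
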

\begin{proof}
    Let $\{I_n\}$, $\{J_n\}$ be arbitrary covers of $B$ and $C$, respectively. Then, $\{I_n\}\cup\{J_n\}$ is a cover of $A$ and, by definition,
    $$M_u(A)\leq \sum\limits_{n=1}^\infty l(I_n)+\sum\limits_{n=1}^\infty l(J_n)$$
    It follows that
    $$M_u(A)-\sum\limits_{n=1}^\infty l(I_n)\leq \sum\limits_{n=1}^\infty l(J_n)$$
    Thus, $M_u(A)-\sum\limits_{n=1}^\infty l(I_n)$ is a lower bound for the set $X_C:=\left\{\sum\limits_{n=1}^\infty l(S_n):C\subseteq \bigcup\limits_{n=1}^\infty S_n \right\}$ and hence
     $$M_u(A)-\sum\limits_{n=1}^\infty l(I_n)\leq \inf(X_C)=M_u(C).$$
     Thus,
     $$M_u(A)-M_u(C)\leq \sum\limits_{n=1}^\infty l(I_n)$$
     and hence
     $$M_u(A)-M_u(C)\leq \inf(X_B)=M_u(B),$$
     from which the result follows.
\end{proof}
It turns out that sets of outer measure zero inherit one of the key properties that hold for the classical Lebesgue measurable subsets of $\R$ of measure $0$, as shown in the following proposition.
\begin{propn}\label{0measuresets}
Let $A,B\subset\sR$ be outer measurable with $M_u(B)=0$. Then, for any subset $C\subseteq B$ we have that $M_u(C)=0$ and $M_u(A\bkl C)=M_u(A)$.
\end{propn}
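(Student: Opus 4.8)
The plan is to treat the two assertions separately and, in each case, to reduce the claim to exhibiting a concrete value and proving it is the greatest lower bound of the relevant family of cover-sums. For $D\subseteq\sR$ write $X_D$ for the set of all totals $\sum_{n=1}^\infty l(S_n)$ taken over countable interval covers $\{S_n\}$ of $D$, so that $D$ is outer measurable precisely when $\inf X_D$ exists, in which case $M_u(D)=\inf X_D$. The pervasive obstacle is that infima need not exist in the non-Archimedean field $\sR$, so at every stage I must name the candidate infimum and verify both that it is a lower bound and that nothing larger is.

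For the first assertion I would note that $C\subseteq B$ makes every cover of $B$ a cover of $C$, whence $X_B\subseteq X_C$. Since each length $l(S_n)\ge 0$, the value $0$ is a lower bound for $X_C$. Conversely, from $M_u(B)=\inf X_B=0$ I obtain, for each $\epsilon>0$ in $\sR$, a cover of $B$---and therefore of $C$---whose total length is smaller than $\epsilon$; hence no strictly positive element of $\sR$ can bound $X_C$ from below. Thus $\inf X_C=0$ exists, so $C$ is outer measurable with $M_u(C)=0$.

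For the second assertion I would show directly that $\inf X_{A\setminus C}$ exists and equals $M_u(A)$. Because $A\setminus C\subseteq A$, every cover of $A$ covers $A\setminus C$, so $X_A\subseteq X_{A\setminus C}$; consequently any $z>M_u(A)=\inf X_A$ already fails to be a lower bound for $X_A$, and a fortiori for the larger set $X_{A\setminus C}$, so no element exceeding $M_u(A)$ can be the infimum. The crux is to show that $M_u(A)$ is itself a lower bound for $X_{A\setminus C}$. For this I would fix an arbitrary cover $\{S_n\}$ of $A\setminus C$ and an arbitrary $\epsilon>0$, and use $M_u(C)=0$ from the first part to pick a cover $\{T_n\}$ of $C$ with $\sum_{n=1}^\infty l(T_n)<\epsilon$. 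Then $\{S_n\}\cup\{T_n\}$ is a cover of $A$, so its total lies in $X_A$ and is at least $M_u(A)$; since $\sum_n l(T_n)<\epsilon$, this yields $\sum_n l(S_n)+\epsilon>M_u(A)$. As $\epsilon>0$ was arbitrary, an order-theoretic passage to the limit in $\sR$ gives $\sum_n l(S_n)\ge M_u(A)$.

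Putting the two halves together shows $\inf X_{A\setminus C}=M_u(A)$, so $A\setminus C$ is outer measurable and $M_u(A\setminus C)=M_u(A)$, completing the proof. I expect the main obstacle to be exactly the lower-bound step for $X_{A\setminus C}$: this is where the failure of infima in $\sR$ must be sidestepped, and the augmentation trick---enlarging a cover of $A\setminus C$ to a cover of $A$ by an arbitrarily short cover of $C$---is what forces the value down to $M_u(A)$. I would also be careful to justify two routine but genuinely non-Archimedean points: that the ``for all $\epsilon>0$'' argument really yields a non-strict inequality in $\sR$ (arguing by contradiction from a hypothetical positive gap), and that the combined family $\{S_n\}\cup\{T_n\}$ has convergent total equal to $\sum_n l(S_n)+\sum_n l(T_n)$, which follows from \Cref{corinfinitesum} since both length sequences are null.
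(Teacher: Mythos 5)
Your proposal is correct and uses essentially the same argument as the paper: the first claim follows directly from the definition since every cover of $B$ covers $C$, and the second rests on the same augmentation trick of adjoining an arbitrarily short cover of $C$ to a cover of $A\setminus C$ to obtain a cover of $A$, which forces every cover-sum for $A\setminus C$ up to at least $M_u(A)$. The only (harmless) difference is presentational: you verify directly that $M_u(A)$ is the greatest lower bound of the cover-sums, whereas the paper phrases the same step as a contradiction using a sequence of covers converging to $A$.
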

\begin{proof}
It follows immediately from the definition that $M_u(C)=0$. To see that $M_u(A\bkl C)=M_u(A)$, it is enough to notice that if $\{J_n^k\}$ converges to $A$ then it so converges to $A\bkl C$, for if $\{S_n\}$ covers $A\bkl C$ and $\sum\limits_{n=1}^\infty l(S_n)<M_u(A)$, we can find $\{I_n\}$ covering $C$ such that $\sum\limits_{n=1}^\infty l(I_n)+\sum\limits_{n=1}^\infty l(S_n)<M_u(A)$. This will yield a contradiction, since $\{I_n\}\cup\{S_n\}$ covers $A$.
\end{proof}
\subsection{Intervals and the outer measure}
We now introduce a series of results showing that intervals behave particularly well with the notion of outer measure.
\begin{propn}\label{AintersecI_isuppmeasurable}
Let $A\subset\mathcal{R}$ be outer measurable and let $I$ be an interval in $\mathcal{R}$. Then $A\cap I$ is outer measurable.
\end{propn}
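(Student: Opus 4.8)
The plan is to feed $A$ into the preceding proposition to obtain a sequence $\left(\{I_n^k\}_{n=1}^\infty\right)_{k=1}^\infty$ of pairwise disjoint intervals converging to $A$, and then to carve every interval with $I$. For fixed $k$, each $I_n^k\cap I$ is again an interval while $I_n^k\bkl I$ is at most the disjoint union of two intervals, and lengths split additively: $l(I_n^k)=l(I_n^k\cap I)+l(I_n^k\bkl I)$. Setting $a_k=\sum_n l(I_n^k\cap I)$ and $b_k=\sum_n l(I_n^k\bkl I)$, Corollary \ref{corinfinitesum} guarantees that each series converges, since the summands are dominated by $l(I_n^k)\to 0$; and by construction $a_k+b_k=\sum_n l(I_n^k)\to M_u(A)$. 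Note that $\{I_n^k\cap I\}_n$ is a cover of $A\cap I$ and $\{I_n^k\bkl I\}_n$ is a cover of $A\bkl I$, and that the nesting $\bigcup_n I_n^{k+1}\subseteq\bigcup_n I_n^k$ is inherited by both families after intersecting, respectively, with $I$ and with its complement.

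I would first record that $\{a_k\}$ and $\{b_k\}$ are each non-increasing. Indeed $\bigcup_n(I_n^k\cap I)$ is a countable union of pairwise disjoint intervals with convergent length-sum, hence S-measurable with $M_s\!\left(\bigcup_n(I_n^k\cap I)\right)=a_k$; since $\bigcup_n(I_n^{k+1}\cap I)\subseteq\bigcup_n(I_n^k\cap I)$, monotonicity of the S-measure gives $a_{k+1}\le a_k$, and the identical argument gives $b_{k+1}\le b_k$.

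The main obstacle is to upgrade the convergence of the \emph{sum} $a_k+b_k$ to convergence of each sequence separately; this is precisely where the non-Archimedean setting bites, since a bounded monotone sequence in $\sR$ need not converge. The key elementary fact is that if $0\le u\le w$ in $\sR$ then $\lambda(u)\ge\lambda(w)$, and hence $|u|_u\le|w|_u$ (for if $\lambda(u)<\lambda(w)$ the leading term of $u-w$ would be positive, forcing $u>w$). Writing $u_k=a_k-a_{k+1}\ge 0$ and $v_k=b_k-b_{k+1}\ge 0$, Corollary \ref{corseqconv} applied to the convergent sequence $\{a_k+b_k\}$ yields $u_k+v_k\to 0$; since $0\le u_k\le u_k+v_k$, the fact above forces $u_k\to 0$, and likewise $v_k\to 0$. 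Corollary \ref{corseqconv} then shows that $\{a_k\}$ and $\{b_k\}$ converge, say to $\alpha$ and $\beta$, with $\alpha+\beta=M_u(A)$.

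Finally I would identify $\alpha$ as the required infimum. Since $\{I_n^k\cap I\}_n$ covers $A\cap I$, each $a_k$ belongs to $X:=\left\{\sum_n l(S_n):S_n\text{ intervals and }A\cap I\subseteq\bigcup_n S_n\right\}$, and $a_k\to\alpha$. To see that $\alpha$ is a lower bound for $X$, take any cover $\{S_n\}$ of $A\cap I$; then $\{S_n\}\cup\{I_n^k\bkl I\}_n$ is a cover of $A$, so by definition $\sum_n l(S_n)+b_k\ge M_u(A)$, whence $\sum_n l(S_n)\ge M_u(A)-b_k$. Letting $k\to\infty$ and using $b_k\to\beta=M_u(A)-\alpha$ gives $\sum_n l(S_n)\ge\alpha$. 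Thus $\alpha$ is a lower bound for $X$ that is realized as the limit of elements of $X$, so $\inf(X)=\alpha$ exists; that is, $A\cap I$ is outer measurable with $M_u(A\cap I)=\alpha$.
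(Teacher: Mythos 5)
Your argument is correct and follows essentially the same route as the paper's: both proofs intersect a converging sequence of covers of $A$ with $I$, establish that the resulting length-sums form a convergent sequence by squeezing their consecutive differences against those of the complementary parts $\sum_n l(I_n^k\cap I^c)$, and then show the limit is a lower bound by recombining an arbitrary cover of $A\cap I$ with the complementary intervals to produce a cover of $A$. The only differences are cosmetic (you make the valuation-squeeze and the S-measure monotonicity explicit, and you argue the lower-bound step directly rather than by contradiction).
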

\begin{proof}
Let $\left(\{J_n^k\}\right)_{k=1}^\infty$ converge to $A$. For each $n,k\in\setN$, we define
$$I_n^k:=I\cap J_n^k.$$
Then, clearly, $l(I_n^k)\leq l(J_n^k)$ for all $n,k\in\setN$. It follows that, for every $k\in\setN$, the series $\sum\limits_{n=1}^\infty l(I_n^k)$ converges. We will show that $\lim\limits_{k\to\infty}\sum\limits_{n=1}^\infty l(I_n^k)$ exists and is equal to $M_u(A\cap I)$.

First we note that since for every $k$, $\bigcup\limits_{n=1}^\infty (J_n^{k+1}\cap I^c)\subseteq\bigcup\limits_{n=1}^\infty (J_n^k\cap I^c)$ we have that $\sum\limits_{n=1}^\infty l(J_n^{k+1}\cap I^c)\leq\sum\limits_{n=1}^\infty l(J_n^k\cap I^c)$. It follows that
\begin{align*}
   \sum\limits_{n=1}^\infty l(I_n^k)- \sum\limits_{n=1}^\infty l(I_n^{k+1})&=\sum\limits_{n=1}^\infty l(J_n^k\cap I)-\sum\limits_{n=1}^\infty l(J_n^{k+1}\cap I)\\
    &=\sum\limits_{n=1}^\infty (l(J_n^k)-l(J_n^k\cap I^c))-\sum\limits_{n=1}^\infty( l(J_n^{k+1})-l(J_n^{k+1}\cap I^c))\\
    &=\sum\limits_{n=1}^\infty l(J_n^k)- \sum\limits_{n=1}^\infty l(J_n^{k+1})+\sum\limits_{n=1}^\infty l(J_n^{k+1}\cap I^c)-\sum\limits_{n=1}^\infty l(J_n^{k}\cap I^c)\\
    &\leq  \sum\limits_{n=1}^\infty l(J_n^k)- \sum\limits_{n=1}^\infty l(J_n^{k+1})
\end{align*}
Since $0\le\sum\limits_{n=1}^\infty l(I_n^k)- \sum\limits_{n=1}^\infty l(I_n^{k+1})\leq  \sum\limits_{n=1}^\infty l(J_n^k)- \sum\limits_{n=1}^\infty l(J_n^{k+1})$ and since $\lim\limits_{k\to\infty}\left(\sum\limits_{n=1}^\infty l(J_n^k)- \sum\limits_{n=1}^\infty l(J_n^{k+1})\right)=0$, we obtain that $\lim\limits_{
k\to\infty}\left(\sum\limits_{n=1}^\infty l(I_n^k)- \sum\limits_{n=1}^\infty l(I_n^{k+1})\right)=0$. It follows then from Corollary \ref{corseqconv} that $\lim\limits_{k\to\infty}\sum\limits_{n=1}^\infty l(I_n^k)$ exists in $\mathcal{R}$. Let $x:=\lim\limits_{k\to\infty}\sum\limits_{n=1}^\infty l(I_n^k)$.

Suppose now that $(J_n)$ is a cover of $A\cap I$ by pairwise disjoint open intervals. We will show that $\sum\limits_{n=1}^\infty l(J_n)\ge x$. Suppose, to the contrary, that $\sum\limits_{n=1}^\infty l(J_n)<x$. It follows that
\begin{align*}
M_u(A)&=\lim_{k\to\infty}\sum\limits_{n=1}^\infty l(J_n^k)\\
    &=\lim_{k\to\infty}\sum\limits_{n=1}^\infty l(I\cap J_n^k)+\lim_{k\to\infty}\sum\limits_{n=1}^\infty l(I^c\cap J_n^k)\\
    &= x+\lim_{k\to\infty}\sum\limits_{n=1}^\infty l(I^c\cap J_n^k)\\
    &>\sum\limits_{n=1}^\infty l(J_n)+\lim_{k\to\infty}\sum\limits_{n=1}^\infty l(I^c\cap J_n^k)
\end{align*}
It follows that $M_u(A)>\sum\limits_{n=1}^\infty l(J_n)+\sum\limits_{n=1}^\infty l(I^c\cap J_n^k)$ for $k$ large enough. This is a contradiction, since ${\{J_n\}\cup\{I^c\cap J_n^k\}}$ is a cover of $A$ by intervals. We conclude that $A\cap I$ is outer measurable.
\end{proof}
\begin{rmk}
\label{AintersecIcompl_isuppmeasurable}
Using a similar argument, one can prove that $A\cap I^c$ is also outer measurable for any interval $I$ and for any outer measurable set $A$.
\end{rmk}
\begin{col}\label{Acaporminusfiniteintervals}
    Let $A\subset\sR$ be outer measurable and let $\{I_n\}_{n=1}^N$ be a sequence of intervals in $\sR$. Then
    $$A\cap \bigcup\limits_{n=1}^N I_n
   \text{ and }
    A\bkl \bigcup\limits_{n=1}^N I_n$$
    are outer measurable.
\end{col}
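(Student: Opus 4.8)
The plan is to prove the two assertions separately. The set-difference $A\bkl\bigcup_{n=1}^N I_n$ is the quick one, so I would dispatch it first. Writing $A\bkl\bigcup_{n=1}^N I_n=A\cap\bigcap_{n=1}^N I_n^c$, I would induct on $N$: the base case $A\cap I_1^c$ is outer measurable by \Cref{AintersecIcompl_isuppmeasurable}, and if $A\cap I_1^c\cap\cdots\cap I_m^c$ is outer measurable, then applying \Cref{AintersecIcompl_isuppmeasurable} to this outer measurable set and the interval $I_{m+1}$ shows $A\cap I_1^c\cap\cdots\cap I_{m+1}^c$ is outer measurable. After $N$ steps this gives the claim.

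For $A\cap\bigcup_{n=1}^N I_n$ the first step is to make the intervals disjoint. Since $\sR$ is totally ordered, sorting the $I_n$ by left endpoint and merging those that overlap or abut rewrites $\bigcup_{n=1}^N I_n$ as a finite union $\bigsqcup_{m=1}^M K_m$ of pairwise disjoint intervals. Hence $A\cap\bigcup_{n=1}^N I_n=\bigsqcup_{m=1}^M(A\cap K_m)$, where each $A\cap K_m$ is outer measurable by \Cref{AintersecI_isuppmeasurable} and the pieces lie in the disjoint intervals $K_m$. It then suffices to show this disjoint union is outer measurable, which I would do by proving that its covering infimum exists and equals $L:=\sum_{m=1}^M M_u(A\cap K_m)$.

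To see that $L$ is a lower bound for the covering sums, I take any cover $\{S_n\}$ of $\bigsqcup_m(A\cap K_m)$; for each $m$ the intervals $\{S_n\cap K_m\}_n$ cover $A\cap K_m$, so $\sum_n l(S_n\cap K_m)\ge M_u(A\cap K_m)$, while for each fixed $n$ the (possibly empty) sets $S_n\cap K_1,\dots,S_n\cap K_M$ are disjoint subintervals of $S_n$, so $\sum_m l(S_n\cap K_m)\le l(S_n)$. Summing over $n$ and interchanging the order-convergent double sum yields $L\le\sum_n l(S_n)$. To see that $L$ is approached from above, I use that each $M_u(A\cap K_m)$ is an infimum to choose, for every $k\in\N$, covers $\{I_n^{m,k}\}_n$ of $A\cap K_m$ with $\sum_n l(I_n^{m,k})<M_u(A\cap K_m)+d^k$; then $\bigcup_{m=1}^M\{I_n^{m,k}\}_n$ covers $A\cap\bigcup_{n=1}^N I_n$ with total length below $L+Md^k$, which tends to $L$. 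Since $L$ is a lower bound approached arbitrarily closely by covering sums, it is their infimum, so the infimum exists in $\sR$ and $A\cap\bigcup_{n=1}^N I_n$ is outer measurable.

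The main obstacle I expect is the disjoint-union step, which cannot be reduced to a single application of an earlier result and forces one to establish existence of the covering infimum, not merely a bound. The two points needing care there are the interchange of the double sum, justified by the ultrametric convergence facts together with \Cref{corinfinitesum}, and the elementary order-theoretic facts that a finite union of intervals in $\sR$ splits into pairwise disjoint intervals and that disjoint subintervals of an interval have total length at most that of the interval.
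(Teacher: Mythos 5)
The paper states this corollary without proof, presenting it as an immediate consequence of Proposition \ref{AintersecI_isuppmeasurable} and Remark \ref{AintersecIcompl_isuppmeasurable}, so there is no official argument to compare against; your proof is correct and fills the gap legitimately. Your treatment of $A\setminus\bigcup_{n=1}^N I_n$ (iterating the remark on $A\cap I_1^c\cap\cdots\cap I_N^c$) is surely what the authors intended. For $A\cap\bigcup_{n=1}^N I_n$, however, you do noticeably more work than the placement of the corollary calls for: after merging the $I_n$ into pairwise disjoint intervals $K_1,\dots,K_M$, you prove from scratch that $\bigcup_{m=1}^M(A\cap K_m)$ is outer measurable with outer measure $\sum_{m=1}^M M_u(A\cap K_m)$ --- but this additivity statement is precisely the content of the proposition and corollary that immediately \emph{follow} this result in the paper (the ones on $(A\cap I)\cup(A\cap J)$ for disjoint intervals $I,J$ and on $A\cap\bigcup_{n=1}^N J_n$ for pairwise disjoint $J_n$), and your argument for it (lower bound by splitting an arbitrary cover along the $K_m$, upper approximation by combining near-optimal covers of each piece) is essentially the one the paper gives there. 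A lighter route that uses only the two results already available is to let $J$ be the convex hull of $K_1\cup\cdots\cup K_M$ and let $G_1,\dots,G_{M-1}$ be the bounded gap intervals between consecutive $K_m$'s, so that
\[
A\cap\bigcup\limits_{n=1}^N I_n=(A\cap J)\cap G_1^c\cap\cdots\cap G_{M-1}^c,
\]
which is outer measurable by one application of Proposition \ref{AintersecI_isuppmeasurable} followed by $M-1$ applications of Remark \ref{AintersecIcompl_isuppmeasurable}. Either way the corollary holds; your version simply establishes the stronger additive statement en route, duplicating the results that come next.
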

\begin{propn}
Let $A\subset\mathcal{R}$ be outer measurable and let $I,J$ be two disjoint intervals in $\mathcal{R}$. Then $(A\cap I)\cup(A\cap J)$ is outer measurable. Moreover,
$$M_u((A\cap I)\cup(A\cap J))=M_u(A\cap I)+M_u(A\cap J).$$
\end{propn}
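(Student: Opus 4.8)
The plan is to reduce everything to the definition of the outer measure as an infimum and then to exploit the disjointness of $I$ and $J$ through a single length inequality. Write $B=A\cap I$ and $C=A\cap J$, and observe that $(A\cap I)\cup(A\cap J)=A\cap(I\cup J)$. Since $I\cup J$ is a union of two intervals, Corollary \ref{Acaporminusfiniteintervals} immediately gives that this set is outer measurable, which settles the first assertion; likewise Proposition \ref{AintersecI_isuppmeasurable} guarantees that $B$ and $C$ are outer measurable, so all three outer measures appearing in the statement exist.

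For the additivity I would prove two inequalities. The inequality $M_u(B\cup C)\le M_u(B)+M_u(C)$ is immediate from the subadditivity lemma applied to the inclusion $B\cup C\subseteq B\cup C$. The substance lies in the reverse inequality $M_u(B)+M_u(C)\le M_u(B\cup C)$.

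To establish it, I would take an arbitrary cover $\{S_n\}$ of $B\cup C$ by intervals with $\sum\limits_{n=1}^\infty l(S_n)$ convergent, and intersect each $S_n$ with $I$ and with $J$. Each $S_n\cap I$ and each $S_n\cap J$ is again an interval, the family $\{S_n\cap I\}$ covers $B$ while $\{S_n\cap J\}$ covers $C$, and since $0\le l(S_n\cap I),\,l(S_n\cap J)\le l(S_n)\to 0$, Corollary \ref{corinfinitesum} ensures that both series $\sum\limits_{n=1}^\infty l(S_n\cap I)$ and $\sum\limits_{n=1}^\infty l(S_n\cap J)$ converge; by the definition of the infimum this yields $M_u(B)\le\sum\limits_{n=1}^\infty l(S_n\cap I)$ and $M_u(C)\le\sum\limits_{n=1}^\infty l(S_n\cap J)$. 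The crucial point, and the only place where the disjointness of $I$ and $J$ is used, is the termwise inequality
\[
l(S_n\cap I)+l(S_n\cap J)\le l(S_n).
\]
This holds because $I$ and $J$, being disjoint intervals in a totally ordered field, satisfy the property that one lies entirely to the left of the other; hence $S_n\cap I$ and $S_n\cap J$ are disjoint subintervals of $S_n$ with the left one ending before the right one begins, so their lengths sum to at most $l(S_n)$. Summing this inequality over $n$ and using that both resulting series converge gives $M_u(B)+M_u(C)\le\sum\limits_{n=1}^\infty l(S_n)$ for every such cover, so that $M_u(B)+M_u(C)$ is a lower bound for the defining set of $M_u(B\cup C)$ and therefore does not exceed its infimum.

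The main obstacle is really just the careful bookkeeping behind that length inequality: one must verify that intersecting an interval with $I$ and with $J$ produces two genuinely disjoint subintervals, and that their linear separation forces the sum of their lengths down to at most $l(S_n)$. Everything else is a routine combination of the subadditivity lemma, the convergence criterion of Corollary \ref{corinfinitesum}, and the greatest-lower-bound property of the infimum in $\mathcal{R}$.
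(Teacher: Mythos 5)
Your proof is correct, and its core coincides with the paper's: both arguments obtain the lower bound $M_u(A\cap I)+M_u(A\cap J)\le\sum_{n=1}^\infty l(S_n)$ by intersecting an arbitrary cover $\{S_n\}$ of the union with $I$ and with $J$ and exploiting the disjointness of the two intervals. The differences are in how the remaining ingredients are supplied. The paper first builds an explicit decreasing sequence of covers of $(A\cap I)\cup(A\cap J)$ (the union of sequences converging to $A\cap I$ and to $A\cap J$) whose total lengths converge to $M_u(A\cap I)+M_u(A\cap J)$; together with the lower bound this yields existence of the infimum and its value in one stroke. You instead get existence from Corollary \ref{Acaporminusfiniteintervals} applied to $A\cap(I\cup J)$ and the upper bound from the subadditivity lemma, which is a legitimate shortcut since both results precede this proposition. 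For the lower bound the paper asserts that ``without loss of generality'' each $S_n$ is contained in $I$ or in $J$ and then uses the exact identity $\sum l(S_n)=\sum l(S_n\cap I)+\sum l(S_n\cap J)$, whereas your termwise inequality $l(S_n\cap I)+l(S_n\cap J)\le l(S_n)$, justified by the linear separation of two disjoint intervals in a totally ordered field, handles an arbitrary cover directly; this is arguably the cleaner way to make the paper's reduction rigorous, at the cost of having to verify the separation claim. Both routes are sound, and your appeal to Corollary \ref{corinfinitesum} to guarantee convergence of the two sub-series is exactly what is needed for the infimum argument to apply.
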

\begin{proof}
Since $A\cap I$ and $A\cap J$ are outer measurable by Proposition \ref{AintersecI_isuppmeasurable}, there exist two sequences of sequences of intervals $\{I_n^k\}$, $\{J_n^k\}$ converging to $A\cap I$ and $A\cap J$, respectively. Now, the sequence of sequences $\{I_n^k\}\cup\{J_n^k\}$ is a cover of $(A\cap I)\cup(A\cap J)$ satisfying that:
$$\lim_{k\to \infty}\sum\limits_{X\in \{I_n^k\}\cup\{J_n^k\}}l(X)=\lim_{k\to \infty}\left[\sum\limits_{n=1}^\infty l(I_n^k)+\sum\limits_{n=1}^\infty l(J_n^k)\right]=M_u(A\cap I)+M_u(A\cap J).$$
Now let $\{S_n\}$ be a cover of $(A\cap I)\cup(A\cap J)$. Without loss of generality, we may assume that $S_n=S_n\cap I$ or  $S_n=S_n\cap J$. We now may subdivide $\{S_n\}$ into $\{S_n\cap I\}\cup\{S_n\cap J\}:=\{I_n\}\cup\{J_n\}$. It follows that $\{I_n\}$ covers $A\cap I$ and $\{J_n\}$ covers $A\cap J$. Thus,
$$\sum\limits_{n=1}^\infty l(S_n)=\sum\limits_{n=1}^\infty l(I_n)+\sum\limits_{n=1}^\infty l(J_n)\geq M_u(A\cap I)+M_u(A\cap J).$$
It follows that $(A\cap I)\cup(A\cap J)$ is outer measurable and that
$$M_u((A\cap I)\cup(A\cap J))=M_u(A\cap I)+M_u(A\cap J).$$
\end{proof}
\begin{col}
Let $A\subset\mathcal{R}$ be outer measurable and let $\{J_n\}_{n=1}^N$ be pairwise disjoint intervals in $\mathcal{R}$. Then $A\cap \left(\bigcup\limits_{n=1}^NJ_n\right)$ is outer measurable and
$$M_u\left(A\cap \left(\bigcup\limits_{n=1}^NJ_n\right)\right)=\sum\limits_{n=1}^NM_u(A\cap J_n).$$
\end{col}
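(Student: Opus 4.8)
The plan is to separate the two assertions. Outer measurability of $A\cap\bigcup_{n=1}^N J_n$ is immediate from Corollary \ref{Acaporminusfiniteintervals}, since $\bigcup_{n=1}^N J_n$ is a finite union of intervals; so the real work lies entirely in the additivity formula. I would establish it by the same two-sided cover estimate used in the preceding proposition, carried out for all $N$ intervals at once (this can equally be phrased as an induction on $N$ with that proposition as the base case $N=2$, but since the proposition is stated for two intervals and $\bigcup_{n=1}^{N-1}J_n$ is not itself an interval, it is cleaner to argue directly with all $N$ intervals simultaneously).

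For the inequality $M_u(A\cap\bigcup_n J_n)\le\sum_{n=1}^N M_u(A\cap J_n)$: by Proposition \ref{AintersecI_isuppmeasurable} each $A\cap J_m$ is outer measurable, so I pick a sequence of covers $(\{I_n^{m,k}\}_n)_k$ converging to $A\cap J_m$ for each $m\in\{1,\dots,N\}$. For every fixed $k$ the finite union $\bigcup_{m=1}^N\{I_n^{m,k}\}_n$ is a countable family of intervals covering $A\cap\bigcup_n J_n$, so by the definition of $M_u$ we get $M_u(A\cap\bigcup_n J_n)\le\sum_{m=1}^N\sum_n l(I_n^{m,k})$. Letting $k\to\infty$ and using that a finite sum of convergent sequences converges to the sum of the limits, the right-hand side tends to $\sum_{m=1}^N M_u(A\cap J_m)$, and the weak inequality survives the limit in the order topology.

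For the reverse inequality I would take an arbitrary cover $\{S_n\}$ of $A\cap\bigcup_n J_n$ by intervals and subdivide each $S_n$ exactly as in the proposition: since the $J_m$ are pairwise disjoint, the intervals $S_n\cap J_1,\dots,S_n\cap J_N$ are disjoint sub-intervals of $S_n$, whence $\sum_{m=1}^N l(S_n\cap J_m)\le l(S_n)$. For each fixed $m$ the family $\{S_n\cap J_m\}_n$ covers $A\cap J_m$, so $\sum_n l(S_n\cap J_m)\ge M_u(A\cap J_m)$. Summing the length inequality over $n$ and interchanging the finite $m$-sum with the infinite $n$-sum gives $\sum_n l(S_n)\ge\sum_{m=1}^N\sum_n l(S_n\cap J_m)\ge\sum_{m=1}^N M_u(A\cap J_m)$; as this holds for every interval cover of $A\cap\bigcup_n J_n$, the infimum $M_u(A\cap\bigcup_n J_n)$ is $\ge\sum_{m=1}^N M_u(A\cap J_m)$. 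Combining the two inequalities yields the claim.

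The steps that need care are the bookkeeping ones rather than the headline estimates: justifying that each $\sum_n l(S_n\cap J_m)$ converges (it does, since $l(S_n\cap J_m)\le l(S_n)\to 0$, so Corollary \ref{corinfinitesum} applies), that the interchange of the finite sum over $m$ with the infinite sums over $n$ is legitimate, and that weak inequalities pass to the limit $k\to\infty$. I expect the main obstacle to be the subdivision bookkeeping in the reverse inequality, namely verifying that $\{S_n\cap J_m\}_n$ really is a cover of $A\cap J_m$ (which uses $A\cap J_m\subseteq A\cap\bigcup_i J_i\subseteq\bigcup_n S_n$) and that the disjointness of the $J_m$ is what delivers the per-interval sub-additivity $\sum_m l(S_n\cap J_m)\le l(S_n)$.
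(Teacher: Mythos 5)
Your proof is correct and matches the paper's intent: the paper states this result as an unproved corollary of the immediately preceding two-interval proposition, and your argument is exactly that proposition's two-sided cover estimate carried out for all $N$ pairwise disjoint intervals at once, with outer measurability supplied by Corollary \ref{Acaporminusfiniteintervals}. Your observation that a naive induction on $N$ stumbles because $\bigcup_{n=1}^{N-1}J_n$ is not itself an interval is apt, and the direct simultaneous argument is the natural way to fill in what the authors left implicit.
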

\begin{propn}
Let $A\subset\mathcal{R}$ be outer measurable and let $\{J_n\}_{n=1}^\infty$ be a sequence of pairwise disjoint intervals in $\mathcal{R}$ with  $\lim\limits_{n\to \infty}l(J_n)=0$. Then $A\cap \left(\bigcup\limits_{n=1}^\infty J_n\right)$ is outer measurable and
$$M_u\left(A\cap \left(\bigcup\limits_{n=1}^\infty J_n\right)\right)=\sum\limits_{n=1}^\infty M_u(A\cap J_n).$$
\end{propn}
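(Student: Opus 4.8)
The plan is to identify the candidate value
$$L:=\sum_{n=1}^\infty M_u(A\cap J_n)$$
and to show that it is precisely the infimum defining $M_u\left(A\cap\bigcup_{n=1}^\infty J_n\right)$. First I would check that $L$ is well defined: each $M_u(A\cap J_n)$ exists by Proposition~\ref{AintersecI_isuppmeasurable}, and since $J_n$ itself covers $A\cap J_n$ we have $0\le M_u(A\cap J_n)\le l(J_n)$. As $\lim_{n\to\infty}l(J_n)=0$, we get $\lim_{n\to\infty}M_u(A\cap J_n)=0$, so $L$ converges in $\mathcal{R}$ by Corollary~\ref{corinfinitesum}. Write $B:=A\cap\bigcup_{n=1}^\infty J_n$ and $B_N:=A\cap\bigcup_{n=1}^N J_n$.

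For the lower bound, let $\{S_m\}$ be any cover of $B$ by intervals whose total length converges. Since $B_N\subseteq B$, the family $\{S_m\}$ also covers each $B_N$, and the preceding Corollary gives $M_u(B_N)=\sum_{n=1}^N M_u(A\cap J_n)$; hence $\sum_{m=1}^\infty l(S_m)\ge\sum_{n=1}^N M_u(A\cap J_n)$ for every $N$. Letting $N\to\infty$ and using that the set of non-negative elements of $\mathcal{R}$ is closed in the order topology, I obtain $\sum_{m=1}^\infty l(S_m)\ge L$. Thus $L$ is a lower bound for the set of convergent covering sums of $B$.

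For the upper bound, fix $\epsilon>0$ in $\mathcal{R}$. For each $n$, outer measurability of $A\cap J_n$ yields a cover $\{T_m^n\}_{m=1}^\infty$ of $A\cap J_n$ with $\sum_{m=1}^\infty l(T_m^n)<M_u(A\cap J_n)+d^n\epsilon$; after replacing each $T_m^n$ by $T_m^n\cap J_n$ (still a cover of $A\cap J_n$, with no larger lengths) I may assume $l(T_m^n)\le l(J_n)$. The countable family $\{T_m^n\}_{m,n}$ then covers $B$. Because $l(T_m^n)\le l(J_n)\to 0$ and, for each fixed $n$, $l(T_m^n)\to 0$ as $m\to\infty$, only finitely many members of the family exceed any prescribed positive threshold; hence the family can be enumerated as a single sequence whose lengths tend to $0$, so its total length converges by Corollary~\ref{corinfinitesum}. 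Invoking the interchangeability of double infinite sums in $\mathcal{R}$, this total equals $\sum_{n=1}^\infty\sum_{m=1}^\infty l(T_m^n)<L+\sum_{n=1}^\infty d^n\epsilon=L+\frac{d}{1-d}\epsilon<L+\epsilon$.

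Combining the two bounds, $L$ is a lower bound for the covering sums of $B$, while for every $\epsilon>0$ there is a cover with sum in $[L,L+\epsilon)$; so no element strictly greater than $L$ can be a lower bound, whence the infimum exists and equals $L$. This is exactly the assertion that $B$ is outer measurable with $M_u(B)=\sum_{n=1}^\infty M_u(A\cap J_n)$. I expect the main obstacle to be the upper-bound step: assembling the per-$n$ covers into a single legitimate cover of $B$ requires the lengths $l(T_m^n)$ to form a null family so that the pooled series converges and may be regrouped as $\sum_n\sum_m l(T_m^n)$. This is precisely where the hypothesis $\lim_{n\to\infty}l(J_n)=0$ and the ultrametric rearrangement property of $\mathcal{R}$ are indispensable; without the vanishing of $l(J_n)$ the combined cover need not have a convergent length and the argument breaks down.
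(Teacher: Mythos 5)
Your proposal is correct and follows essentially the same strategy as the paper: convergence of $\sum_n M_u(A\cap J_n)$ from $M_u(A\cap J_n)\le l(J_n)\to 0$, the lower bound via the finite-union corollary applied to $A\cap\bigcup_{n=1}^N J_n$, and the upper bound by pooling near-optimal covers of the pieces $A\cap J_n$ and interchanging the double sum. The only cosmetic difference is that you use $\epsilon$-close covers weighted by $d^n\epsilon$ where the paper uses its ``converging sequence of covers'' formalism; the substance is identical.
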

\begin{proof}
Since $J_n\cap A\subseteq J_n$, we have that $M_u(A\cap J_n)\leq l(J_n)$. Thus, $\lim\limits_{n\to\infty} M_u(A\cap J_n)=0$ and hence $\sum\limits_{n=1}^\infty M_u(A\cap J_n)$ converges, by Corollary \ref{corinfinitesum}. Let $\{J_{n,m}^k\}_{m=1}^\infty$ be a sequence that converges to $A\cap J_n$. The cover $\bigcup\limits_{n=1}^\infty\{J_{n,m}^k\}$ satisfies the following:
$$\lim_{k\to\infty}\sum\limits_{X\in \bigcup\limits_{n=1}^\infty\{J_{n,m}^k\}}l(X)=\lim_{k\to\infty}\sum\limits_{n=1}^\infty \sum\limits_{m=1}^\infty l(J_{n,m}^k)=\sum\limits_{n=1}^\infty\left( \lim_{k\to\infty}\sum\limits_{m=1}^\infty l(J_{n,m}^k)\right)=\sum\limits_{n=1}^\infty M_u(A\cap J_n).$$
Suppose now that $\{S_n\}$ is any cover of $A\cap \left(\bigcup\limits_{n=1}^\infty J_n\right)$. Then, for every $N\in\setN$, we have
$$\sum\limits_{n=1}^\infty l(S_n)\geq \sum\limits_{n=1}^N M_u(A\cap J_n)=M_u\left(A\cap \left(\bigcup\limits_{n=1}^NJ_n\right)\right)$$
Hence $\sum\limits_{n=1}^\infty l(S_n)\geq \sum\limits_{n=1}^\infty M_u(A\cap J_n)$.
We conclude that $A\cap \left(\bigcup\limits_{n=1}^\infty J_n\right)$ is outer measurable and $$M_u\left(A\cap \left(\bigcup\limits_{n=1}^\infty J_n\right)\right)=\sum\limits_{n=1}^\infty M_u(A\cap J_n).$$
\end{proof}
\begin{propn}
Let $A\subset\mathcal{R}$ be outer measurable and let $I$ be an interval in $\mathcal{R}$ such that $I\cap A=\emptyset$. Then $A \cup I$ is outer measurable with
$M_u(A\cup I)=M_u(A)+l(I)$.
\end{propn}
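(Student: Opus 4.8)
The plan is to pin down $M_u(A\cup I)$ by squeezing it between a lower bound that holds for \emph{every} interval cover of $A\cup I$ and an approximating family of covers whose total lengths get arbitrarily close to $M_u(A)+l(I)$ from above. Both ingredients are available: $A$ is outer measurable, so $M_u(A)$ exists, and $I$ is an interval, so $M_u(I)=l(I)$ (since $I$ is S-measurable and, for S-measurable sets, the infimum over interval covers exists and equals the S-measure, which for $I$ is $l(I)$).

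For the easy direction I would produce covers of $A\cup I$ whose lengths approximate $M_u(A)+l(I)$ from above. Since $M_u(A)=\inf\{\sum_n l(J_n)\}$ exists, for every $\epsilon>0$ there is an interval cover $\{J_n\}$ of $A$ with $\sum_n l(J_n)<M_u(A)+\epsilon$. Adjoining the single interval $I$ yields an interval cover $\{J_n\}\cup\{I\}$ of $A\cup I$ of total length $<M_u(A)+l(I)+\epsilon$. Hence, for every $\epsilon>0$, the element $M_u(A)+l(I)+\epsilon$ fails to be a lower bound for the cover-lengths of $A\cup I$.

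Second, and this is the crux, I would show that $M_u(A)+l(I)$ \emph{is} a lower bound: every interval cover $\{S_n\}$ of $A\cup I$ satisfies $\sum_n l(S_n)\ge M_u(A)+l(I)$. The mechanism is to split each $S_n$ along $I$. Because $S_n$ is an interval, $l(S_n)=l(S_n\cap I)+l(S_n\cap I^c)$, where $l(S_n\cap I^c)$ denotes the total length of the at most two intervals comprising $S_n\cap I^c$; this is the same finite additivity of length already exploited in Proposition \ref{AintersecI_isuppmeasurable}. Summing over $n$ gives $\sum_n l(S_n)=\sum_n l(S_n\cap I)+\sum_n l(S_n\cap I^c)$. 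Now $\{S_n\cap I\}$ covers $I$ (as $I\subseteq A\cup I\subseteq\bigcup_n S_n$), so $\sum_n l(S_n\cap I)\ge M_u(I)=l(I)$; and since $A\cap I=\emptyset$ forces $A\subseteq I^c$, the collection $\{S_n\cap I^c\}$, broken into its constituent intervals, covers $A$, whence $\sum_n l(S_n\cap I^c)\ge M_u(A)$. Adding the two inequalities yields $\sum_n l(S_n)\ge M_u(A)+l(I)$.

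Combining the two parts, $M_u(A)+l(I)$ is a lower bound for the cover-lengths of $A\cup I$ that is approached arbitrarily closely from above, hence it is the greatest lower bound; in particular the infimum exists, so $A\cup I$ is outer measurable and $M_u(A\cup I)=M_u(A)+l(I)$. The main obstacle is the lower-bound step, and within it the one genuinely non-trivial input is $M_u(I)=l(I)$ — that no interval cover of $I$ can have total length below $l(I)$ — which I would justify by invoking the S-measurability of $I$ together with the identification of the infimum over interval covers with the S-measure; the disjoint splitting, the additivity of length, and the characterization of the infimum in the ordered field $\mathcal{R}$ are all routine.
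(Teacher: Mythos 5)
Your proof is correct and follows essentially the same route as the paper: both establish the lower bound by splitting each $S_n$ of a cover of $A\cup I$ into $S_n\cap I$ and $S_n\cap I^c$ (using $l(S_n)=l(S_n\cap I)+l(S_n\cap I^c)$ and $A\subseteq I^c$), and both realize $M_u(A)+l(I)$ from above by adjoining $I$ to near-optimal covers of $A$. The only cosmetic difference is that the paper packages the upper approximation as a nested sequence of covers converging to $A\cup I$, whereas you use the $\epsilon$-characterization of the infimum directly; these are equivalent.
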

\begin{proof}
Let $\{J_n^k\}$ be a sequence that converges to $A$. Without loss of generality, we may assume that $J_n^k=J_n^k\cap I^c$. Then, if we define
$$I_0^k:=I\text{ and }I_n^k:=J_n^k\text{ for each }n\in\setN,$$
we obtain that $A\cup I\subseteq \bigcup\limits_{n=0}^\infty I_n^{k+1}\subseteq \bigcup\limits_{n=0}^\infty I_n^{k}$ and that ${\lim\limits_{k\to\infty}\sum\limits_{n=0}^\infty l(I_n^{k})=M_u(A)+l(I)}$.\\

Let $\{S_n\}$ be a cover of $A\cup I$. We can subdivide $\{S_n\}$ into $\{S_n\cap I\}$ and $\{S_n\cap I^c\}$ covers of $I$ and $A$, respectively. Thus,
\[
    \sum\limits_{n=0}^\infty l(S_n)=\sum\limits_{n=0}^\infty (l(S_n\cap I)+l(S_n\cap I^c))=\sum\limits_{n=0}^\infty l(S_n\cap I)+\sum\limits_{n=0}^\infty l(S_n\cap I^c)\geq l(I)+M_u(A).
\]
We conclude that  $A \cup I$ is outer measurable with
$M_u(A\cup I)=M_u(A)+l(I)$.
\end{proof}
\begin{col}
Let $A\subset\mathcal{R}$ be outer measurable, let $N\ge 2$ in $\N$ be given, and let  $\{I_n\}_{n=1}^N$ be pairwise disjoint intervals in $\mathcal{R}$ such that $A\cap I_n=\emptyset$ for each $n\in\{1,\ldots,N\}$. Then $A\cup\left(\bigcup\limits_{n=1}^N I_n\right)
$ is outer measurable, with
$$M_u\left(A\cup\left(\bigcup\limits_{n=1}^N I_n\right)\right)=M_u(A)+\sum\limits_{n=1}^N l(I_n)$$
\end{col}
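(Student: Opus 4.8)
The plan is to prove the statement by induction on $N$, using the immediately preceding proposition as the engine that absorbs one interval at a time. The proposition already handles a single interval disjoint from an outer measurable set, so the idea is to bundle $A$ together with the intervals accumulated so far into a single outer measurable set and then peel off the remaining intervals one at a time.

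For the base case $N=2$, I would set $B := A\cup I_1$. The preceding proposition applies to $A$ and $I_1$ (since $A\cap I_1=\emptyset$), giving that $B$ is outer measurable with $M_u(B)=M_u(A)+l(I_1)$. To add $I_2$, I first check that $I_2\cap B=\emptyset$; this holds because $I_2\cap A=\emptyset$ by hypothesis and $I_2\cap I_1=\emptyset$ by pairwise disjointness. Applying the proposition a second time, now to $B$ and $I_2$, yields that $B\cup I_2 = A\cup I_1\cup I_2$ is outer measurable with $M_u(B\cup I_2)=M_u(B)+l(I_2)=M_u(A)+l(I_1)+l(I_2)$.

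For the inductive step, I would assume the result for some $N\ge 2$ and set $B := A\cup\left(\bigcup_{n=1}^N I_n\right)$, which is outer measurable with $M_u(B)=M_u(A)+\sum_{n=1}^N l(I_n)$ by the inductive hypothesis. The one point requiring a moment of care is verifying $I_{N+1}\cap B=\emptyset$: this follows since $I_{N+1}\cap A=\emptyset$ and $I_{N+1}\cap I_n=\emptyset$ for every $n\le N$. The proposition applied to $B$ and $I_{N+1}$ then gives that $B\cup I_{N+1}=A\cup\left(\bigcup_{n=1}^{N+1} I_n\right)$ is outer measurable with $M_u\!\left(B\cup I_{N+1}\right)=M_u(B)+l(I_{N+1})=M_u(A)+\sum_{n=1}^{N+1} l(I_n)$, closing the induction.

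Since every step is a direct invocation of the preceding proposition, I do not expect any genuine analytic obstacle here; the only thing to watch is the bookkeeping that keeps the newly added interval disjoint from the growing union, which relies simultaneously on the pairwise disjointness of the $I_n$ and on their disjointness from $A$.
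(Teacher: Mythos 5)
Your induction is correct and is precisely the argument the paper intends: the corollary is stated without proof because it follows by iterating the preceding proposition, with the only point to check being that each new interval is disjoint from the accumulated union — which you verify. Nothing further is needed.
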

\begin{col}
Let $A\subset\mathcal{R}$ be outer measurable, let $N\in\N$ be given, and let  $\{I_n\}_{n=1}^N$ be intervals in $\mathcal{R}$. Then $A\cup\left(\bigcup\limits_{n=1}^NI_n\right)$ is outer measurable.
\end{col}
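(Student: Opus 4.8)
The plan is to reduce everything to the single-interval case and then run an immediate induction on $N$. Concretely, I would first establish the following one-step claim: \emph{if $B\subset\mathcal{R}$ is outer measurable and $I$ is any interval in $\mathcal{R}$, then $B\cup I$ is outer measurable.} Granting this, the corollary follows by setting $B_0=A$ and $B_k=B_{k-1}\cup I_k$ for $k=1,\dots,N$; each $B_k$ is outer measurable by the claim, and $B_N=A\cup\left(\bigcup_{n=1}^N I_n\right)$ is exactly the set in question.

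To prove the one-step claim, the key observation is the set identity $(B\cap I^c)\cup I = B\cup I$, which rewrites $B\cup I$ as a union of two disjoint pieces, the interval $I$ and the set $B\cap I^c$ that misses $I$. First I would apply Remark \ref{AintersecIcompl_isuppmeasurable} to conclude that $B\cap I^c$ is outer measurable. Since $(B\cap I^c)\cap I=\emptyset$, the earlier proposition asserting that $A\cup I$ is outer measurable whenever $A\cap I=\emptyset$ applies verbatim with $B\cap I^c$ in place of $A$, and yields that $(B\cap I^c)\cup I$ is outer measurable. Invoking the set identity once more then gives that $B\cup I$ is outer measurable, proving the claim.

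I do not expect a serious obstacle here, since the substantive analytic work was already carried out in the results on intersections with intervals and on adjoining a disjoint interval. The one point that deserves care is \emph{why} the arbitrary (possibly overlapping, and possibly $A$-meeting) intervals $I_n$ cause no difficulty: a more naive route would first rewrite $\bigcup_{n=1}^N I_n$ as a finite disjoint union of intervals and then try to apply the preceding corollary, which however also demands disjointness from the base set. The induction above sidesteps both requirements at once, because adjoining the intervals one at a time never forces the $I_n$ to be mutually disjoint, and replacing the current set $B$ by $B\cap I^c$ before adjoining $I$ automatically restores disjointness from the interval being added. Verifying the elementary identity $(B\cap I^c)\cup I=B\cup I$ and confirming that the hypotheses of Remark \ref{AintersecIcompl_isuppmeasurable} and of the disjoint-adjoining proposition hold at each stage is then all that remains.
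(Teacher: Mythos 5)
Your proposal is correct and matches the paper's own argument: the paper likewise reduces to the single-interval case via the identity $A\cup I=(A\cap I^c)\cup I$, combining Remark \ref{AintersecIcompl_isuppmeasurable} with the proposition on adjoining a disjoint interval, and handles $N$ intervals by repeating the step. You merely make explicit the induction and the disjointness bookkeeping that the paper leaves implicit.
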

\begin{proof}
It is enough to see that ${A\cup I=(A\cap I^c)\cup I}$, which is outer measurable.
\end{proof}
\begin{propn}
Let $A\subset\mathcal{R}$ be outer measurable and let  $\{I_n\}_{n=1}^\infty$ be a collection of pairwise disjoint intervals in $\mathcal{R}$ such that $A\cap I_n=\emptyset$ for each $n\in\setN$ and $\lim\limits_{n\to\infty}l(I_n)=0$. Then $A\cup\left(\bigcup\limits_{n=1}^\infty I_n\right)
$ is outer measurable, with
$$M_u\left(A\cup\left(\bigcup\limits_{n=1}^\infty I_n\right)\right)=M_u(A)+\sum\limits_{n=1}^\infty l(I_n).$$
\end{propn}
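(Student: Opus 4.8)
The plan is to prove both assertions at once by the two-sided estimate used throughout this section: exhibit an explicit sequence of covers of $A\cup\bigl(\bigcup_{n=1}^\infty I_n\bigr)$ whose total lengths converge to $M_u(A)+\sum_{n=1}^\infty l(I_n)$, and separately show that every cover of this set by intervals has total length at least $M_u(A)+\sum_{n=1}^\infty l(I_n)$; together these force the defining infimum to exist and to equal that value. First I would record that $\sum_{n=1}^\infty l(I_n)$ converges in $\sR$: since $\lim_{n\to\infty}l(I_n)=0$, this is immediate from Corollary \ref{corinfinitesum}. Denote this sum by $P$ and set $L:=M_u(A)+P$.

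For the upper estimate, let $\bigl(\{J_n^k\}\bigr)_{k=1}^\infty$ be a sequence of sequences of pairwise disjoint intervals \emph{converging to} $A$, so that $\lim_{k\to\infty}\sum_{n=1}^\infty l(J_n^k)=M_u(A)$. For each $k$, the countable collection $\{J_n^k\}\cup\{I_n\}$ is a cover of $A\cup\bigl(\bigcup_{n=1}^\infty I_n\bigr)$ by intervals, with total length $x_k:=\sum_{n=1}^\infty l(J_n^k)+P$; rearranging the combined family into a single sequence is justified because both constituent sums converge and hence their terms tend to $0$. Thus each $x_k$ belongs to the set $X$ over which the defining infimum is taken, and $x_k\to L$ as $k\to\infty$.

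For the lower estimate, let $\{S_n\}$ be an arbitrary cover of $A\cup\bigl(\bigcup_{n=1}^\infty I_n\bigr)$ by intervals. For each fixed integer $N\ge 2$, the intervals $I_1,\dots,I_N$ are pairwise disjoint and satisfy $A\cap I_n=\emptyset$, so the finite-union corollary above gives that $A\cup\bigl(\bigcup_{n=1}^N I_n\bigr)$ is outer measurable with $M_u\bigl(A\cup\bigcup_{n=1}^N I_n\bigr)=M_u(A)+\sum_{n=1}^N l(I_n)$. Since $\{S_n\}$ also covers this smaller set, $\sum_{n=1}^\infty l(S_n)\ge M_u(A)+\sum_{n=1}^N l(I_n)$ for every such $N$. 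Letting $N\to\infty$ and using $\sum_{n=1}^N l(I_n)\to P$, the order-limit principle in $\sR$ yields $\sum_{n=1}^\infty l(S_n)\ge L$, so $L$ is a lower bound for $X$.

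Finally, I would combine the two estimates: $L$ is a lower bound for $X$, while the elements $x_k\in X$ satisfy $x_k\to L$; hence any lower bound $L'$ of $X$ obeys $L'\le x_k$ for all $k$ and therefore $L'\le\lim_k x_k=L$, so $L=\inf X$ exists in $\sR$. This simultaneously proves outer measurability and the formula $M_u\bigl(A\cup\bigcup_{n=1}^\infty I_n\bigr)=M_u(A)+\sum_{n=1}^\infty l(I_n)$. The step I expect to require the most care is the passage $N\to\infty$ in the lower estimate: because $\sR$ is non-Archimedean, one cannot invoke suprema or infima of the real line, and must instead argue directly that an inequality holding for every finite $N$ survives in the limit, using the convergence of $\sum_{n=1}^N l(I_n)$ together with the order topology. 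The reduction to the already-established finite-union corollary is precisely what keeps this manageable.
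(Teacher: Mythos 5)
Your proposal is correct and follows essentially the same route as the paper: build covers of the union from covers converging to $A$ together with $\{I_n\}$ for the upper estimate, and reduce the lower estimate to the finite-union case before passing to the limit in $N$. The only difference is that you spell out the final infimum argument more explicitly than the paper does, which is a point in your favour rather than a divergence.
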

\begin{proof}
Let $\{J_n^k\}$ be a sequence converging to $A$. Then $\{J_n^k\}\cup\{I_n\}$ covers $A\cup\left(\bigcup\limits_{n=1}^\infty I_n\right)$ and
$$\lim_{k\to\infty}\sum\limits_{X\in \{J_n^k\}\cup\{I_n\}_{n=1}^\infty}l(X)=\lim_{k\to\infty}\sum\limits_{n=1}^\infty l(J_n^k)+\sum\limits_{n=1}^\infty l(I_n)=M_u(A)+\sum\limits_{n=1}^\infty l(I_n).$$
Now, let $\{S_n\}$ be a cover of $\{A\}\cup\{I_n\}_{n=1}^\infty$. It follows that
$$M_u\left(A\cup\left(\bigcup\limits_{n=1}^N I_n\right)\right)=M_u(A)+\sum\limits_{n=1}^N l(I_n)\leq \sum\limits_{n=1}^\infty l(S_n)$$
for every $N\in\setN$, and hence $M_u(A)+\sum\limits_{n=1}^\infty l(I_n)\leq \sum\limits_{n=1}^\infty l(S_n)$.
\end{proof}
\begin{propn}
Let $A\subset\mathcal{R}$ be outer measurable and let  $\{I_n\}_{n=1}^\infty$ be a collection of intervals in $\mathcal{R}$ such that $\lim\limits_{n\to\infty}l(I_n)=0$. Then $A\cup\left(\bigcup\limits_{n=1}^\infty I_n\right)$ is outer measurable.
\end{propn}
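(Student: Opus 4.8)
The plan is to realize $B:=A\cup\bigcup_{n=1}^\infty I_n$ as the increasing union of the sets $B_N:=A\cup\bigcup_{n=1}^N I_n$ and to prove a continuity-from-below statement. Each $B_N$ is outer measurable by the corollary on finite unions with arbitrary intervals. Since $B_N\subseteq B_{N+1}\subseteq B$ and $B=B_N\cup\bigcup_{n>N}I_n$, the whole argument reduces to two quantitative controls forced by the hypothesis $\lim_{n\to\infty}l(I_n)=0$: control on the increments $M_u(B_{N+1})-M_u(B_N)$ and on the tails $\sum_{n>N}l(I_n)$. I will then show that $L:=\lim_N M_u(B_N)$ exists and that $B$ is outer measurable with $M_u(B)=L$.

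First I would establish that $L$ exists. Monotonicity of the outer measure (immediate from the definition, since any interval cover of $B_{N+1}$ is also a cover of $B_N$) gives $M_u(B_N)\le M_u(B_{N+1})$. For the reverse bound I would apply the subadditivity Lemma to the inclusion $B_{N+1}\subseteq B_N\cup I_{N+1}$, all three of whose sets are outer measurable, obtaining $M_u(B_{N+1})\le M_u(B_N)+M_u(I_{N+1})\le M_u(B_N)+l(I_{N+1})$, where the last step uses that $I_{N+1}$ covers itself. Hence $0\le M_u(B_{N+1})-M_u(B_N)\le l(I_{N+1})$, and since $0\le x\le y$ forces $|x|_u\le|y|_u$ while $l(I_{N+1})\to 0$, the consecutive differences tend to $0$; Corollary \ref{corseqconv} then yields the existence of $L$ in $\sR$. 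This is precisely where Cauchy completeness does the work, and it is the main obstacle of the proof: a monotone bounded sequence in $\sR$ need not converge, so the hypothesis $l(I_n)\to 0$ is exactly what lets us invoke the Cauchy criterion.

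It remains to show that $\inf\{\sum_n l(S_n):\{S_n\}\text{ covers }B\}$ exists and equals $L$. For the lower bound, any interval cover $\{S_n\}$ of $B$ also covers every $B_N$, so $\sum_n l(S_n)\ge M_u(B_N)$ for all $N$; letting $N\to\infty$ and using that a limit of non-negative elements of $\sR$ is non-negative gives $\sum_n l(S_n)\ge L$, so $L$ is a lower bound for the cover-sums of $B$. For covers approaching $L$, for each $N$ I would pick an interval cover $\{S_n^N\}$ of $B_N$ with $\sum_n l(S_n^N)<M_u(B_N)+d^N$ (possible because $M_u(B_N)$ is an infimum) and adjoin the tail $\{I_n\}_{n>N}$; the resulting collection covers $B=B_N\cup\bigcup_{n>N}I_n$ and has length-sum less than $M_u(B_N)+d^N+\sum_{n>N}l(I_n)$. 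Since $\sum_n l(I_n)$ converges by Corollary \ref{corinfinitesum}, its tails tend to $0$, and together with $M_u(B_N)\to L$ and $d^N\to 0$ these cover-sums tend to $L$. Combined with the lower bound, this shows the infimum exists and equals $L$, so $B$ is outer measurable.

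I expect the only delicate points beyond the limit-existence step to be routine facts about the ultrametric order on $\sR$: that $0\le x\le y$ implies $|x|_u\le|y|_u$, that limits of non-negative sequences are non-negative, and that an infimum, once it exists, is approached from above, each of which follows quickly from the equivalence of the order and valuation topologies. It is worth emphasizing that this argument never uses that the $I_n$ are pairwise disjoint or disjoint from $A$, so it genuinely removes both hypotheses of the preceding proposition.
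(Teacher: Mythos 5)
Your proof is correct and follows essentially the same route as the paper: approximate $B$ by the finite unions $B_N=A\cup\bigcup_{n=1}^N I_n$ (outer measurable by the finite-union corollary), control the increments $M_u(B_{N+1})-M_u(B_N)$ by $l(I_{N+1})$ and the tails by convergence of $\sum l(I_n)$, obtain the limit $L$ via Corollary \ref{corseqconv}, and produce covers of $B$ by adjoining the tail $\{I_n\}_{n>N}$ to near-optimal covers of $B_N$. Your write-up is a slightly cleaner organization of the paper's argument (you separate the existence of $L$ from the two bounds, and you avoid the paper's ``without loss of generality, assume the $I_n$ are pairwise disjoint and decreasing in length'' reduction, which your version genuinely does not need).
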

\begin{proof}
Without loss of generality, we may assume that $\{I_n\}_{n=1}^\infty$ is a pairwise disjoint collection that's arranged in the order of decreasing length. For each $m\in \mathbb{N}$, there exist $N_m\in\setN$ and some cover $\{S_n^m\}$ of  $A_m:=A\cup\left(\bigcup\limits_{n=1}^{N_m}I_n\right)$ such that
$$\sum\limits_{n=N_m+1}^\infty l(I_n)<d^m\text{ and }\sum\limits_{n=1}^\infty l(S^m_n)-M_u(A_m)<d^m.$$
Then $C_m:=\{S_n^m\}\cup\{I_n\}_{n=N_m+1}^\infty$ is a sequence of covers for $A\cup\left(\bigcup\limits_{n=1}^\infty I_n\right)$ that satisfies
\begin{align*}
    \left|\sum\limits_{X\in C_{m+1}}l(X)-\sum\limits_{X\in C_m} l(X)\right|&=\left|\sum\limits_{n=N_{m+1}+1}^\infty l(I_n)+\sum\limits_{n=1}^\infty l(S^{m+1}_n)-\sum\limits_{n=N_{m}+1}^\infty l(I_n)-\sum\limits_{n=1}^\infty l(S^m_n)\right|\\
    &\leq \left|\sum\limits_{n=N_{m+1}+1}^\infty l(I_n)\right|+\left|\sum\limits_{n=N_{m}+1}^\infty l(I_n)\right|+\left| \sum\limits_{n=1}^\infty l(S^{m+1}_n)-\sum\limits_{n=1}^\infty l(S^m_n)\right|\\
    &<2d^m+\left| \sum\limits_{n=1}^\infty l(S^{m+1}_n)-\sum\limits_{n=1}^\infty l(S^m_n)\right|\\
    &\leq 2d^m+\left|\sum\limits_{n=1}^\infty l(S^{m+1}_n)-M_u(A_{m+1})\right|+\left|M_u(A_{m+1})-M_u(A_{m})\right|\\
    &\hspace{.1in}+\left|M_u(A_{m})-\sum\limits_{n=1}^\infty l(S^m_n)\right|\\
    &<4d^m+\left|M_u(A_{m+1})-M_u(A_{m})\right|\\
    &\leq 4d^m+\sum\limits_{n=N_{m}+1}^\infty l(I_n)\\
    &<5d^m.
\end{align*}
Thus, using Corollary \ref{corseqconv}, we infer that $x:=\lim\limits_{m\to\infty}
\sum\limits_{X\in C_m} l(X)$ exists in $\sR$.
Suppose now that $\{J_n\}$ is a cover of $A\cup\left(\bigcup\limits_{n=1}^\infty I_n\right)$ such that $\sum\limits_{n=1}^\infty l(S_n)<x$. We choose $k$ such that $d^k+\sum\limits_{n=1}^\infty l(S_n)<x$ and $m>k$ such that $d^k+\sum\limits_{n=1}^\infty l(S_n)<\sum\limits_{X\in C_m} l(X)$. It follows that
\begin{align*}
    M_u(A_m)&>\sum\limits_{n=1}^\infty l(S^m_n)-d^m=\sum\limits_{X\in C_m} l(X)-\sum\limits_{n=N_{m}+1}^\infty l(I_n)-d^m\\
    &>\sum\limits_{X\in C_m} l(X)-2d^m>\sum\limits_{X\in C_m} l(X)-d^k\\
     &>\sum\limits_{n=1}^\infty l(S_n),
\end{align*}
which is a contradiction, since $\{S_n\}$ covers $A_m$. We conclude that $A\cup\left(\bigcup\limits_{n=1}^\infty I_n\right)$ is outer measurable.
\end{proof}
\begin{lemma}\label{A-cupInisum}
   Let $A\subset\mathcal{R}$ be outer measurable and let  $\{I_n\}_{n=1}^\infty$ be a sequence of pairwise disjoint intervals in $\mathcal{R}$ such that $\lim\limits_{n\to\infty}l(I_n)=0$. Then the set $A\bkl \bigcup\limits_{n=1}^\infty I_n$
    is outer measurable, and
    $$M_u\left(A\bkl \bigcup\limits_{n=1}^\infty I_n\right)=\lim\limits_{k\to\infty}M_u\left(A\bkl \bigcup\limits_{n=1}^k I_n\right).$$
\end{lemma}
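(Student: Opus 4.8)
The plan is to reduce the statement about the infinite difference to statements about the finite truncations $B_k := A\setminus\bigcup_{n=1}^{k}I_n$, each of which is outer measurable by Corollary \ref{Acaporminusfiniteintervals}. Writing $B := A\setminus\bigcup_{n=1}^{\infty}I_n$, the sets $B_k$ form a decreasing chain $B_1\supseteq B_2\supseteq\cdots$ with $\bigcap_{k} B_k = B$, and since every cover of $B_k$ is automatically a cover of $B_{k+1}$, monotonicity of the infimum gives $M_u(B_{k+1})\le M_u(B_k)$. So $\{M_u(B_k)\}$ is a decreasing sequence in $\sR$, and the heart of the argument is to show first that it converges and then that its limit $L$ is exactly the infimum defining $M_u(B)$.

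First I would establish convergence of $\{M_u(B_k)\}$. Because the $I_n$ are pairwise disjoint, one checks the disjoint decomposition $B_k = B_{k+1}\cup(A\cap I_{k+1})$, where $A\cap I_{k+1}$ is outer measurable by Proposition \ref{AintersecI_isuppmeasurable}. Applying the earlier subadditivity lemma (that $X\subseteq Y\cup Z$ with all three outer measurable forces $M_u(X)\le M_u(Y)+M_u(Z)$), together with $M_u(A\cap I_{k+1})\le l(I_{k+1})$ since $I_{k+1}$ is itself a cover, yields
\[
0\le M_u(B_k)-M_u(B_{k+1})\le l(I_{k+1}).
\]
As $l(I_n)\to 0$ by hypothesis, Corollary \ref{corseqconv} guarantees that $L:=\lim_{k\to\infty}M_u(B_k)$ exists in $\sR$.

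It then remains to prove $M_u(B)=L$, which splits into two bounds. For the upper bound, given $\epsilon>0$ I would choose $k$ with $M_u(B_k)<L+\epsilon/2$ (possible since $M_u(B_k)\to L$) and then a cover $\{S_n\}$ of $B_k$ with $\sum_n l(S_n)<M_u(B_k)+\epsilon/2<L+\epsilon$; since $B\subseteq B_k$, this is also a cover of $B$, so covers of $B$ realize sums arbitrarily close to $L$ from above. The lower bound is the main obstacle, and it is where the hypothesis $l(I_n)\to 0$ is used a second time: given any cover $\{S_n\}$ of $B$, I would repair it into a cover of $B_k$ by adjoining the tail intervals, using the identity $B_k\setminus B = A\cap\bigcup_{n>k}I_n\subseteq\bigcup_{n>k}I_n$. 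Then $\{S_n\}\cup\{I_n\}_{n>k}$ covers $B_k$, whence
\[
\sum_{n}l(S_n)\ge M_u(B_k)-\sum_{n=k+1}^{\infty}l(I_n).
\]
As $k\to\infty$ the right-hand side tends to $L$, the tail sum vanishing by Corollary \ref{corinfinitesum}, and since the left-hand side is a fixed element of $\sR$ dominating a sequence converging to $L$, we conclude $\sum_n l(S_n)\ge L$. Thus $L$ is a lower bound for all covering sums of $B$ and is approached from above, so it is precisely their infimum; hence $B$ is outer measurable with $M_u(B)=L=\lim_{k\to\infty}M_u(B_k)$, as claimed.
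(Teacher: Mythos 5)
Your proposal is correct and follows essentially the same route as the paper: the same decomposition $B_k=B_{k+1}\cup(A\cap I_{k+1})$ giving $0\le M_u(B_k)-M_u(B_{k+1})\le l(I_{k+1})$, Corollary \ref{corseqconv} for convergence of $M_u(B_k)$, near-optimal covers of the $B_k$ for the upper bound, and adjoining the tail $\{I_n\}_{n>k}$ to a cover of $B$ for the lower bound. The only cosmetic differences are that you argue the lower bound directly by letting $k\to\infty$ where the paper argues by contradiction with a fixed $N$, and you use $\epsilon/2$ where the paper uses $d^k$.
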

\begin{proof}
   For each $k\in\setN$, let
    $A_k:=A\bkl \bigcup\limits_{n=1}^k I_n$ and let $A_\infty:=A\bkl \bigcup\limits_{n=1}^\infty I_n$. Then, clearly, $A_{k+1}\subseteq A_k$ and hence $M_u(A_{k+1})\leq M_u(A_k)$ for all $k\in\setN$. We note that $A_k=A_{k+1}\cup(A\cap I_{k+1})$, and hence
    $$M_u(A_k)-M_u(A_{k+1})\leq M_u(A\cap I_{k+1})\leq l(I_{k+1}).$$
    Since $0\le M_u(A_k)-M_u(A_{k+1})\leq l(I_{k+1})$ and since $\lim\limits_{k\to\infty} l(I_{k+1})=0$, we obtain that $\lim\limits_{k\to\infty}\left(M_u(A_k)-M_u(A_{k+1})\right)=0$. It follows then from Corollary \ref{corseqconv} that $\lim\limits_{k\to\infty}M_u\left(A_k\right)$ exists in $\sR$. We define
    $$x:=\lim\limits_{k\to\infty}M_u\left(A_k\right)=\lim\limits_{k\to\infty}M_u\left(A\bkl \bigcup\limits_{n=1}^k I_n\right).$$
    Since each $A_k$ is outer measurable, there exists a cover $\{S_n^k\}$ of $A_k$ such that $\sum\limits_{n=1}^\infty l(S_n^k)-M_u(A_k)<d^k$. Now, the sequence $\{S_n^k\}$ is a sequence of covers of $A_\infty$ that satisfies
    $$\lim\limits_{k\to\infty}\sum\limits_{n=1}^\infty l(S_n^k)=x.$$
   It remains to show that $x$ is a lower bound for the sum of lengths of any countable collection of intervals covering $A_\infty$. Assume to the contrary that $\{J_n\}$ is a cover of $A_\infty$ such that
    $\sum\limits_{n=1}^\infty l(J_n)<x$.
    We now take $N\in\N$ such that
    $$\sum\limits_{n=1}^\infty l(J_n)+\sum\limits_{n=N}^\infty l(I_n)<x\leq M_u(A_N)$$
    which yields a contradiction to the fact that $\{J_n\}\cup\{I_n\}_{n=N}^\infty$ covers $A_N$.
\end{proof}
\begin{propn}\label{AcupBisadittive}
    Let $A,B\subset\mathcal{R}$ be outer measurable such that $A \subseteq \bigcup_{n=1}^\infty I_n$ and $B \subseteq (\bigcup_{n=1}^\infty I_n)^c$, where the $I_n$'s are pairwise disjoint intervals in $\sR$ with $\lim\limits_{n\to\infty}l(I_n)=0$. Then $A \cup B$ is outer measurable, and
    $$M_u(A \cup B) = M_u(A) + M_u(B).$$
\end{propn}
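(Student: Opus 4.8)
The plan is to establish the outer measurability of $A\cup B$ and the equality $M_u(A\cup B)=M_u(A)+M_u(B)$ simultaneously, by proving that $M_u(A)+M_u(B)$ is the infimum of $\sum_{n=1}^\infty l(S_n)$ over all interval covers $\{S_n\}$ of $A\cup B$. Set $U:=\bigcup_{n=1}^\infty I_n$. Since $A\subseteq U$ we have $A=A\cap U$, and since $B\subseteq U^c$ we have $B\cap I_n=\emptyset$ for every $n$. Because the $I_n$ are pairwise disjoint with $l(I_n)\to 0$, applying the earlier proposition that $M_u\bigl(A\cap\bigcup_n J_n\bigr)=\sum_n M_u(A\cap J_n)$ to the decomposition $A=A\cap\bigcup_n I_n$ records the identity $M_u(A)=\sum_{n=1}^\infty M_u(A\cap I_n)$, which will be needed at the last step. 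The ``$\le$'' half is easy: given $\epsilon>0$, I would choose covers $\{P_n\}$ of $A$ and $\{Q_n\}$ of $B$ with $\sum_n l(P_n)<M_u(A)+\epsilon/2$ and $\sum_n l(Q_n)<M_u(B)+\epsilon/2$ (possible since $M_u(A)$ and $M_u(B)$ are infima); then $\{P_n\}\cup\{Q_n\}$ is an interval cover of $A\cup B$ of total length $<M_u(A)+M_u(B)+\epsilon$. Thus cover sums approach $M_u(A)+M_u(B)$ from above, and everything reduces to showing this quantity is a lower bound for every cover sum.

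For the lower bound, which is the crux, let $\{S_n\}$ be any interval cover of $A\cup B$ with $\sum_n l(S_n)$ convergent. The obstacle is that the natural way to extract a cover of $B$, namely intersecting with $U^c$, yields the sets $S_n\cap U^c=S_n\setminus\bigcup_m I_m$, which are \emph{not} intervals and to which $l$ does not apply. I would circumvent this by truncating $U$ to finitely many of the $I_m$. Fix $N\in\setN$ and split each interval as $S_n=\bigl(S_n\cap\bigcup_{m=1}^N I_m\bigr)\cup\bigl(S_n\setminus\bigcup_{m=1}^N I_m\bigr)$ (a disjoint union), so that by finite additivity of length $l(S_n)=\sum_{m=1}^N l(S_n\cap I_m)+l\bigl(S_n\setminus\bigcup_{m=1}^N I_m\bigr)$, where now $S_n\setminus\bigcup_{m=1}^N I_m$ is a finite union of intervals.

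Next I would read off two coverings. For each fixed $m\le N$, the intervals $\{S_n\cap I_m\}_n$ cover $A\cap I_m$ (because $A\cap I_m\subseteq A\subseteq\bigcup_n S_n$ and $A\cap I_m\subseteq I_m$), so $\sum_n l(S_n\cap I_m)\ge M_u(A\cap I_m)$. And since $B\subseteq U^c\subseteq\bigl(\bigcup_{m=1}^N I_m\bigr)^c$, the sets $S_n\setminus\bigcup_{m=1}^N I_m$, each a finite union of intervals, together form a countable interval cover of $B$ with convergent total length, so $\sum_n l\bigl(S_n\setminus\bigcup_{m=1}^N I_m\bigr)\ge M_u(B)$. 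Summing the length identity over $n$ and interchanging the finite $m$-sum with the $n$-sum gives $\sum_n l(S_n)\ge\sum_{m=1}^N M_u(A\cap I_m)+M_u(B)$ for every $N\in\setN$. Letting $N\to\infty$, the increasing partial sums $\sum_{m=1}^N M_u(A\cap I_m)$ converge to $M_u(A)$ by the identity from the first paragraph, and since weak inequalities pass to the limit in the order topology, $\sum_n l(S_n)\ge M_u(A)+M_u(B)$. Together with the ``$\le$'' half this shows the infimum exists and equals $M_u(A)+M_u(B)$, giving both claims. The single real obstacle is the non-interval nature of $S_n\cap U^c$; the finite truncation, reconciled with the full sum through $M_u(A)=\sum_m M_u(A\cap I_m)$, is exactly what resolves it.
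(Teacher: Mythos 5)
Your proof is correct and follows essentially the same route as the paper: both arguments hinge on truncating $\bigcup_n I_n$ to its first $N$ intervals so that $S_n\setminus\bigcup_{m=1}^N I_m$ is a finite union of intervals covering $B$, and then letting $N\to\infty$. The only cosmetic difference is that you absorb the limit via the earlier countable-additivity identity $M_u(A)=\sum_m M_u(A\cap I_m)$, whereas the paper keeps the full double sum $\sum_n\sum_m l(S_n\cap I_m)\ge M_u(A)$ and subtracts a tail term that vanishes as $N\to\infty$.
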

\begin{proof}
    First note that, for any $N \in \N$, we have that
\[
    \left(\bigcup_{n=1}^N I_n\right)^c = \bigcup_{n=1}^{N+1} J_n^N
\]
where $J_n^N$ are pairwise disjoint intervals (possibly infinite). The superscript $N$ means that the partition of $\left(\bigcup_{n=1}^N I_n\right)^c$ into intervals depends on $N$. Now, let $\{S_n\}$ be a cover of $A \cup B$. Then
\begin{align*}
    \sum_{n=1}^\infty l(S_n) &= \sum_{n=1}^\infty l\left(S_n \cap \bigcup_{m=1}^N I_m\right) + l\left(S_n \cap \left(\bigcup_{m=1}^N I_m\right)^c\right) \\
    &= \sum_{n=1}^\infty \left( \sum_{m=1}^N l(S_n \cap I_m) + \sum_{m=1}^{N+1} l(S_n \cap J_m^N) \right) \\
    &= \sum_{n=1}^\infty \left( \sum_{m=1}^\infty l(S_n \cap I_m) - \sum_{m=M+1}^\infty l(S_n \cap I_m) + \sum_{m=1}^{N+1} l(S_n \cap J_m^N) \right) \\
    &= \sum_{n=1}^\infty \sum_{m=1}^\infty l(S_n \cap I_m) - \sum_{n=1}^\infty \sum_{m=N+1}^\infty l(S_n \cap I_m) + \sum_{n=1}^\infty \sum_{m=1}^{N+1} l(S_n \cap J_m^N).
\end{align*}
Note that $\{S_n \cap I_m\ |\ n, m \in \N\}$ is a cover for $A$ (an appropriate indexing can be found easily). Also, $\{S_n \cap J_m^N\ |\ n \in \N,\ m = 1,2,\cdots, N+1\}$ is a cover for $B$ since
\[
    B \subseteq \left(\bigcup_{m=1}^\infty I_m \right)^c \subseteq \left(\bigcup_{m=1}^N I_m \right)^c = \bigcup_{m=1}^{N+1} J_m^N
\]
Thus,
\[
    \sum_{n=1}^\infty \sum_{m=1}^\infty l(S_n \cap I_m) \geq M_u(A)
\text{ and } \sum_{n=1}^\infty \sum_{m=1}^{N+1} l(S_n \cap J_m^N) \geq M_u(B),
\]
and hence
\[
    \sum_{n=1}^\infty l(S_n) \geq M_u(A) + M_u(B) - \sum_{n=1}^\infty \sum_{m=N+1}^\infty l(S_n \cap I_m).
\]
It follows that
\[
     \sum_{n=1}^\infty l(S_n) \geq M_u(A) + M_u(B) - \sum_{m=N+1}^\infty \sum_{n=1}^\infty l(S_n \cap I_m).
\]
Taking the limit as $N\to\infty$ yields
\[
     \sum_{n=1}^\infty l(S_n) \geq M_u(A) + M_u(B).
\]
Finally, we check that if $\{L_n^k\}$, $\{T_n^k\}$ converge to $A$ and $B$, respectively, then $\{L_n^k\}\cup\{T_n^k\}$ covers $A\cup B$ and
\[
    \lim_{k \to \infty} \left(\sum_{n=1}^\infty l(L_n^k) + \sum_{n=1}^\infty l(T_n^k)\right) = M_u(A) + M_u(B).
\]
We conclude that $A\cup B$ is outer measurable and $M_u(A\cup B)=M_u(A)+M_u(B)$.
\end{proof}
\begin{theorem}
 Let $A,B\subset\mathcal{R}$ be outer measurable. Then $A\cup B$ is outer measurable.
\end{theorem}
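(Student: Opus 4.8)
The plan is to produce the value of $M_u(A\cup B)$ explicitly and verify directly that it is the infimum of the cover sums, rather than trying to force $A\cup B$ into the separated form of Proposition~\ref{AcupBisadittive}. First I would fix a sequence of covers $\{T_n^k\}_{n=1}^\infty$ converging to $B$ in the sense of the convergent-cover proposition, so that the $T_n^k$ are pairwise disjoint intervals, $U_{k+1}\subseteq U_k$ where $U_k:=\bigcup_{n=1}^\infty T_n^k$, each row has $l(T_n^k)\to 0$ as $n\to\infty$, and $\sum_{n=1}^\infty l(T_n^k)\to M_u(B)$. Since each $U_k$ is a disjoint union of intervals whose lengths tend to $0$, Lemma~\ref{A-cupInisum} guarantees that $A\setminus U_k$ is outer measurable; write $a_k:=M_u(A\setminus U_k)$. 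Because $U_{k+1}\subseteq U_k$, the sets $A\setminus U_k$ increase with $k$, so $a_k$ is nondecreasing and bounded above by $M_u(A)$.

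Next I would show that $\{a_k\}$ converges. The key estimate is $A\setminus U_{k+1}=(A\setminus U_k)\cup\big(A\cap(U_k\setminus U_{k+1})\big)$, where $U_k\setminus U_{k+1}$ is a disjoint union of intervals of total length $\sum_n l(T_n^k)-\sum_m l(T_m^{k+1})$; this tends to $0$ because $\{\sum_n l(T_n^k)\}_k$ is convergent, hence Cauchy. Covering $A\setminus U_k$ by an almost-optimal cover and adjoining the interval components of $U_k\setminus U_{k+1}$ then gives $0\le a_{k+1}-a_k< d^k+\big(\sum_n l(T_n^k)-\sum_m l(T_m^{k+1})\big)$, and the right-hand side tends to $0$. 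By Corollary~\ref{corseqconv} the limit $L:=\lim_{k\to\infty}a_k$ exists in $\sR$, and I claim that $A\cup B$ is outer measurable with $M_u(A\cup B)=x$, where $x:=M_u(B)+L$.

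The heart of the argument is the lower bound: every interval cover $\{S_n\}$ of $A\cup B$ satisfies $\sum_n l(S_n)\ge x$. Fixing $k$, I would split each $S_n$ along $U_k$, so that $l(S_n)=l(S_n\cap U_k)+l(S_n\cap U_k^c)$. Since $U_k$ is the disjoint union of the intervals $T_m^k$, the family $\{S_n\cap T_m^k\}_{n,m}$ is an interval cover of $(A\cup B)\cap U_k\supseteq B$, whence its total length is at least $M_u(B)$; likewise $\{S_n\cap U_k^c\}$ is an interval cover of $(A\cup B)\setminus U_k=A\setminus U_k$ (using $B\subseteq U_k$), so its total length is at least $a_k$. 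Summing yields $\sum_n l(S_n)\ge M_u(B)+a_k$ for every $k$. Letting $k\to\infty$ and using the order-topology fact that a field element exceeding every term of a sequence converging to $L$ must exceed $L$, I obtain $\sum_n l(S_n)\ge M_u(B)+L=x$.

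Finally, to see that $x$ is the infimum and not merely a lower bound, I would exhibit covers whose lengths approach $x$ from above: for each $k$ choose a cover $\{S_n^k\}$ of $A\setminus U_k$ with $\sum_n l(S_n^k)<a_k+d^k$; then $\{T_n^k\}\cup\{S_n^k\}$ covers $A\cup B$ (since $U_k\supseteq B\cup(A\cap U_k)$) with total length less than $\sum_n l(T_n^k)+a_k+d^k$, which tends to $x$. Together with the lower bound, this shows the defining infimum exists in $\sR$ and equals $x$, so $A\cup B$ is outer measurable. The main obstacle is the interplay in the lower-bound step with the convergence of $a_k$: both hinge on controlling the nondecreasing sequence $\{a_k\}$ in the non-Archimedean setting, where boundedness of a monotone sequence gives nothing and one must instead extract genuine Cauchy estimates (through the $d^k$ terms and the vanishing tail lengths of the $U_k$) and invoke the order-topology limit characterization of Corollary~\ref{corseqconv}.
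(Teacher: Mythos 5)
Your strategy is sound and is, in essence, the paper's own argument with the roles of $A$ and $B$ interchanged: the paper fixes covers $\{I_n^k\}$ converging to $A$, peels off $B_k=B\cap\bigcap_n(I_n^k)^c$, shows $M_u(B_k)$ is Cauchy, and gets the lower bound by invoking Proposition~\ref{AcupBisadittive} for $A\cup B_k$; you fix covers $\{T_n^k\}$ converging to $B$, peel off $A\setminus U_k$ via Lemma~\ref{A-cupInisum}, and re-derive the additivity by splitting an arbitrary cover along $U_k$ by hand. The value you produce, $M_u(B)+\lim_k M_u(A\setminus U_k)$, is the same number the paper obtains, and your use of Corollary~\ref{corseqconv} to replace the useless monotonicity of $a_k$ by a genuine Cauchy estimate is exactly the right move in this setting.

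There is, however, a concrete gap that appears twice, both times for the same reason: the complement of a \emph{countable} union of intervals inside an interval need not be a countable union of intervals (a fat-Cantor-type residue can remain), and $M_u$ is defined only through covers by intervals. First, in your Cauchy estimate you assert that $U_k\setminus U_{k+1}$ is ``a disjoint union of intervals of total length $\sum_n l(T_n^k)-\sum_m l(T_m^{k+1})$''; it need not be a union of intervals at all. Second, in the lower bound you call $\{S_n\cap U_k^c\}$ an ``interval cover'' of $A\setminus U_k$; the sets $S_n\cap U_k^c=S_n\setminus\bigcup_m T_m^k$ are generally not intervals, so $l(S_n\cap U_k^c)$ is undefined and the inequality ``total length $\ge a_k$'' does not follow from the definition of $M_u$. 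Both steps can be repaired by the finite-truncation device the paper itself uses in the proof of Proposition~\ref{AcupBisadittive}: intersect with the complement of the \emph{finite} union $\bigcup_{m=1}^{M}T_m^k$, which does split each interval into finitely many intervals, obtain for every $M$ the inequalities
\[
a_{k+1}< a_k+d^k+\sum_{n=1}^\infty l(T_n^k)-\sum_{m=1}^{M}l(T_m^{k+1})
\quad\text{and}\quad
a_k\le\sum_{n=1}^\infty l(S_n)-\sum_{n=1}^\infty\sum_{m=1}^{M}l(S_n\cap T_m^k),
\]
and then let $M\to\infty$ (the double sums converge because their terms are dominated by $l(T_m^k)$ and $l(S_n)$, which tend to $0$). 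With those repairs your proof goes through and is a legitimate, symmetric variant of the one in the paper.
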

\begin{proof}
Let $\{I_n^k\}$ be a sequence that converges to $A$. We define
$$B_k:=B\cap\bigcap\limits_{n=1}^\infty \left(I_n^k\right)^c.$$
Since each $B_k$ is outer measurable, there exists a cover $\{J_n^k\}$ of $B_k$ such that $\sum\limits_{n=1}^\infty l(J_n^k)-M_u(B_k)<d^k$. We note that, since
$\bigcup\limits_{n=1}^\infty I_n^{k+1}\subseteq\bigcup\limits_{n=1}^\infty I_n^{k}$,
then
$\bigcap\limits_{n=1}^\infty \left(I_n^{k}\right)^c\subseteq\bigcap\limits_{n=1}^\infty \left(I_n^{k+1}\right)^c$
and hence $B_k\subseteq B_{k+1}$. Moreover
\begin{align*}
    B_{k+1}&= B_k\cup(B_{k+1}\bkl B_k)\\
    &=B_k\cup \left(B\cap \left(\bigcup\limits_{n=1}^\infty I_n^{k}\bkl\bigcup\limits_{n=1}^\infty I_n^{k+1}\right)\right)\\
     &=B_k\cup \left(\bigcup\limits_{n=1}^\infty I_n^{k}\cap \left(B\bkl\bigcup\limits_{n=1}^\infty I_n^{k+1}\right)\right)\\
     &\subseteq B_k\cup \left(\bigcup\limits_{n=1}^\infty I_n^{k}\bkl\bigcup\limits_{n=1}^\infty I_n^{k+1}\right).
\end{align*}
Hence $B_{k+1}\bkl B_k$ is outer measurable and $M_u(B_{k+1}\bkl B_k)\leq \sum\limits_{n=1}^\infty l(I_n^{k})-\sum\limits_{n=1}^\infty l(I_n^{k+1})\to 0$ as $k\to\infty$. It follows that the sequence $M_u(B_k)$ is Cauchy, and it converges in $\mathcal{R}$. That is,
$$\lim_{k\to\infty}\sum\limits_{n=1}^\infty l(J_n^{k})=\lim_{k\to\infty}M_u(B_k)\in \mathcal{R}.$$
We define $B_\infty:=\bigcup\limits_{n=1}^\infty B_k$. Let $\{S_n\}$ be a cover of $B_\infty$. It follows that $\{S_n\}$ covers $B_k$ for each $k\in\setN$; thus, $M_u(B_k)\leq\sum\limits_{n=1}^\infty l(S_n)$ for each $k\in\setN$ and hence $x:=\lim\limits_{k\to\infty} M_u(B_k)\leq\sum\limits_{n=1}^\infty l(S_n)$.

Since $B_{k+1}\bkl B_k$ is outer measurable for each $k\in\setN$ and $\lim\limits_{k\to\infty}M_u(B_{k+1}\bkl B_k)=0$, there exists $\{R_n^k\}$ cover of $B_{k+1}\bkl B_k$ such that $\sum\limits_{n=1}^\infty l(R_n^k)-M_u(B_{k+1}\bkl B_k)<d^k$. It follows that $\{J_n^k\}\cup\bigcup\limits_{m=k}^\infty\{R_n^m\}$ covers $B_\infty=B_k\cup\left(\bigcup\limits_{m=k}^\infty( B_{m+1}\bkl B_m)\right)$ and
$\lim\limits_{k\to\infty}\sum\limits_{n=1}^\infty l(J_n^k)+\sum\limits_{m=k}^\infty\sum\limits_{n=1}^\infty l(R_n^m)=x$.
We conclude that $B_\infty$ is outer measurable and has outer measure $x$.

Now, $\{J_n^k\}\cup\{I_n^k\}$ covers $A\cup B_k$ and
$$\lim_{k\to\infty}\left(\sum\limits_{n=1}^\infty l(J_n^k)+\sum\limits_{n=1}^\infty l(I_n^k)\right)=M_u(A)+x.$$
Finally, if $\{T_n\}$ covers $A\cup B$, then it does also cover $A\cup B_k$, which is outer measurable by Proposition $\ref{AcupBisadittive}$. Thus,
$$M_u(A)+M_u(B_k)=M_u(A\cup B_k)\leq \sum\limits_{n=1}^\infty l(S_n)$$
and hence
$$M_u(A)+x=M_u(A)+ \lim_{k\to \infty} M_u(B_k)\leq \sum\limits_{n=1}^\infty l(S_n).$$
We conclude that $A\cup B$ is outer measurable.
\end{proof}
\subsection{The problem with the outer measure.}
From the last result, one can see that the outer measure may be used to introduce a strong enough definition of measurability to serve as a replacement for the S-measure in the Levi-Civita field $\sR$. However, similar to the situation in classical real analysis, the new concept fails in extreme cases when sets are too fuzzy or too tangled together as to pass the measurability test but fail for intersections and additivity. We give an example (Example \ref{examplenoint} below) that illustrates both cases. But first we prove the following result which will be used in that example.
\begin{propn}
Let $I\subset\sR$ be an interval, $X\subseteq I$ a dense subset of $I$, and $\{S_n\}$ a cover of $X$. Then $\sum\limits_{n=1}^\infty l(S_n)\geq l(I)$.
\end{propn}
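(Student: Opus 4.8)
The plan is to argue by contradiction. If $\sum_{n=1}^\infty l(S_n)$ fails to converge in $\sR$ there is nothing to prove, so I would assume it converges and suppose, for contradiction, that $L:=\sum_{n=1}^\infty l(S_n)<l(I)$. Replacing each $S_n$ by $S_n\cap I$ only shrinks lengths and keeps $\{S_n\}$ a cover of $X$ (since $X\subseteq I$), so I may assume every $S_n\subseteq I$. Set $\epsilon:=l(I)-L>0$ and $\mu:=\lambda(\epsilon)$. The whole difficulty is that $\bigcup_n S_n$ could still be dense in $I$ (as happens in $\R$ when one covers the rationals by shrinking intervals); the non-Archimedean structure is what rules this out, and the proof must locate a genuine subinterval of $I$ meeting no $S_n$, contradicting the density of $X$.

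The first, and crucial, step is a truncation that reduces matters to a situation where every surviving covering interval is infinitesimally shorter than the piece of $I$ it must cover. Since the tails $\sum_{n>N}l(S_n)\to 0$ by Corollary \ref{corinfinitesum}, and since these are sums of positive terms (so the $\lambda$ of the sum equals the minimum of the $\lambda$'s of the summands), I can choose $N$ with $\lambda\!\left(\sum_{n>N}l(S_n)\right)>\mu$; then $\lambda(l(S_n))>\mu$ for every $n>N$. Next I delete the closures $\overline{S_1},\dots,\overline{S_N}$ from $I$. This leaves a finite union of at most $N+1$ gaps of total length at least $l(I)-\sum_{n\le N}l(S_n)\ge\epsilon$, so some gap $G$ is a nondegenerate interval with $l(G)\ge\epsilon/(N+1)$. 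Consequently $\lambda(l(G))\le\mu$, and $G$ is disjoint from $S_1,\dots,S_N$; hence $X\cap G$ is dense in $G$ and is covered entirely by the tail $\{S_n\}_{n>N}$, each of which now satisfies $\lambda(l(S_n))>\mu\ge\lambda(l(G))$.

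Finally I pass to the leading order of $G$ to reach a cardinality contradiction. Writing $\alpha$ for the left endpoint of $G$, $s_0:=\lambda(l(G))$, and $c_0:=l(G)[s_0]>0$, I consider the leading-order standard part $\phi\colon G\to[0,c_0]$, $\phi(x)=\mathrm{st}\!\big((x-\alpha)d^{-s_0}\big)$, where $\mathrm{st}(y)=y[0]$ is the real part of a finite number $y$. Density of $X$ in $G$ forces $\phi(X\cap G)\supseteq(0,c_0)$, an uncountable set: any prescribed leading coefficient $t\in(0,c_0)$ is realized by a point of $X$ lying within an infinitesimal neighbourhood of $\alpha+E(t)d^{s_0}$ of order finer than $s_0$. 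On the other hand, each tail interval satisfies $\lambda(l(S_n\cap G))\ge\lambda(l(S_n))>s_0$, so $\phi(S_n\cap G)$ collapses to a single real point; thus $\phi(X\cap G)\subseteq\bigcup_{n>N}\phi(S_n\cap G)$ is countable, a contradiction, and therefore $\sum_{n=1}^\infty l(S_n)\ge l(I)$. The step I expect to be the main obstacle is the truncation: one must pick $N$ so that the surviving gap $G$ has order of magnitude no finer than the slack $\mu$ while every remaining covering interval is strictly finer than $G$, since it is exactly this separation of orders of magnitude that degenerates the shadows $\phi(S_n\cap G)$ to points; checking that the extracted gap is genuinely an interval and that the density of $X$ descends to $G$ are the details requiring the most care.
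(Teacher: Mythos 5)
Your proof is correct and rests on the same mechanism as the paper's own argument: split the cover into finitely many intervals plus a tail whose members are infinitesimally short relative to the relevant scale, then play the uncountably many real leading coefficients that density of $X$ forces against the countably many values the tail intervals' shadows can produce. The only difference is packaging --- the paper runs the cardinality argument at every truncation level $k$ to show the truncated endpoints of $S_1,\dots,S_N$ tile $I$ with no gaps (yielding equality order by order), whereas you argue by contradiction at the single scale $\lambda\left(l(I)-\sum_{n=1}^\infty l(S_n)\right)$ inside one residual gap $G$ --- so the two proofs are essentially the same.
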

\begin{proof}
Let $k\in\Q$ be given. For every $t\in\sR$, we define the $k$-th truncation of $t$ as follows:
$$t^{(k)}=\sum\limits_{q\leq k}t[q]d^q.$$

Now, since $X$ is dense in $I$, then, for every $s\in I$, there exists $x\in X$ such that $x=_k s$. Let $N\in\N$ to be such that for every $n>N$, $l(S_n)\ll d^k$. It follows that if $s,t\in S_n$, then $s =_k t$. Consider now the finite sub-collection of intervals $\{S_n\}_{n=1}^N=\{(a_n,b_n)\}_{n=1}^N$ arranged such that
$$a^{(k)}\leq a_1^{(k)}\leq b_1^{(k)}\leq a_2^{(k)}\leq b_2^{(k)}\leq...\leq a_N^{(k)}\leq b_N^{(k)}\leq b^{(k)}.$$
Suppose now that $a^{(k)}< a_1^{(k)}$. Since the interval $A:=\left (a^{(k)}, a_1^{(k)}\right)$ is uncountable but the set $\{x_n^{(m)}\}_{n=N+1}^\infty$ where $x_n^{(k)}\in S_n$ is countable, there exists $t\in A$ such that $t^{(k)}\neq x_n^{(k)}$ for all $n\in N$. It follows that there exists $x\in X$ such that $x=_kt^{(k)}$. However, this is a contradiction, because $x\notin S_n$ for all $n\in \mathbb{N}$. We conclude that $a^{(k)}=a_1^{(k)}$. Using an identical argument one shows that $b_n^{(k)}=a_{n+1}^{(k)}$. It follows that
$$\sum\limits_{n=1}^\infty l(S_n)=_k\sum\limits_{n=1}^\infty \left(b_n^{(k)}-a_n^{(k)}\right)=_k b^{(k)}-a^{(k)}=_k b-a=_k l(I).$$
Since this is true for any arbitrary $k$, one has that
$\sum\limits_{n=1}^\infty l(S_n)=l(I)$.
\end{proof}
\begin{col}
Let $X$ be a dense subset of an interval $I$ in $\sR$. Then $X$ is outer measurable and $M_u(X)=l(I)$.
\end{col}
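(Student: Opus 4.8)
The plan is to read the corollary off directly from the proposition just proved, together with the trivial observation that $I$ itself covers $X$. Recall that outer measurability of $X$ means that
\[
\inf\left\{\sum_{n=1}^\infty l(S_n): S_n\text{'s are intervals and }X\subseteq\bigcup_{n=1}^\infty S_n\right\}
\]
exists in $\sR$, and, when it does, that infimum is by definition $M_u(X)$. Denote this defining set of sums by $\Sigma_X$. The whole argument consists of exhibiting two facts about $\Sigma_X$: that $l(I)$ is a lower bound for it, and that $l(I)$ actually belongs to it.

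First I would invoke the preceding proposition: for every cover $\{S_n\}$ of $X$ by intervals we have $\sum_{n=1}^\infty l(S_n)\geq l(I)$. This says precisely that $l(I)$ is a lower bound for $\Sigma_X$. Next I would produce a cover realizing this bound. Since $X\subseteq I$, the single-interval family consisting of $I$ alone covers $X$, and its total length is exactly $l(I)$; hence $l(I)\in\Sigma_X$.

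The only place where the non-Archimedean setting could in principle cause trouble is the existence of the infimum, since an arbitrary bounded subset of $\sR$ need not possess one. Here, though, that difficulty evaporates: a lower bound that itself belongs to the set is automatically the minimum, and hence the infimum. Therefore $\inf\Sigma_X$ exists in $\sR$ and equals $l(I)$, so $X$ is outer measurable with $M_u(X)=l(I)$. I do not anticipate any genuine obstacle in this corollary; the substantive content lives entirely in the preceding proposition, and the corollary simply packages the lower-bound inequality and the attainment of that bound into the single statement that the defining infimum exists and equals $l(I)$.
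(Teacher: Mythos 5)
Your argument is correct and is exactly the intended deduction: the paper states this corollary without proof precisely because it follows from the preceding proposition (lower bound) together with the trivial cover $\{I\}$ (attainment), so the infimum is achieved and hence exists. No issues.
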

\begin{eg}\label{examplenoint}
Consider the interval $I=[0,1]$ and the sets
\begin{align*}
    T&:=\left\{x\in[0,1]\mid \exists N\in \mathbb{N}; \forall q>N, x[q]=0\right\}\\
    S&:=\left\{x\in[0,1]\mid \forall N\in \mathbb{N}; \exists q>N, x[q]\neq 0\right\}
\end{align*}
Clearly $T,S$ are both dense in $[0,1]$ and $T\cap S=\emptyset$. Consider now the set
$$C:=\bigcup\limits_{n=2}^\infty \left(d^{(n-1)/n},2d^{(n-1)/n}\right)$$
It is not difficult to show that $C$ is not outer measurable; on the other hand, $T\cup C$ and $J\cup C$ are both dense in $[0,1]$, and hence they are outer measurable. Moreover,
$$(T\cup C)\cap (S\cup C)= C\cup (T\cap S)=C .$$
Thus, the intersection of two outer measurable sets is not necessarily outer measurable.
\end{eg}
\section{A Lebesgue-like measure.}
With the results from the previous section, we are ready to define a new measure and a new family of measurable sets in $\sR$, making use of the outer measure on $\sR$, similarly to how the Lebesgue measure of real analysis is defined in terms of the outer measure on $\R$.
\begin{defn}
   Let $A\subset\mathcal{R}$ be an outer measurable set. Then we say that $A$ is L-measurable if for every other outer measurable set $B\subset\mathcal{R}$ both $A\cap B$ and $A^c\cap B$ are outer measurable and
    $$M_u(B)=M_u(A\cap B)+M_u(A^c\cap B).$$
    In this case, we define the L-measure of $A$ to be $M(A):=M_u(A)$. The family of L-measurable sets in $\mathcal{R}$ will be denoted by $\lM$.
\end{defn}
\begin{propn}
   Let $A,B\in\lM$ be given. Then $A\cap B,A\cup B,A\cup B^c\in\lM$.
\end{propn}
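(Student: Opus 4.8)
The plan is to recognize the defining identity for $\lM$ as the Carathéodory splitting criterion and to prove the proposition by the classical algebra-of-measurable-sets argument, adapted so that every auxiliary set whose outer measure is written down is first certified to be outer measurable. It is convenient to isolate the predicate $\mathcal{C}(E)$: ``for every outer measurable $T\subset\sR$, the sets $E\cap T$ and $E^c\cap T$ are outer measurable and $M_u(T)=M_u(E\cap T)+M_u(E^c\cap T)$.'' Thus $E\in\lM$ exactly when $E$ is outer measurable and $\mathcal{C}(E)$ holds. The predicate $\mathcal{C}$ is manifestly symmetric under complementation, $\mathcal{C}(E)\Leftrightarrow\mathcal{C}(E^c)$, and a careful reading of the union/intersection computations below shows they use only that $A,B$ satisfy $\mathcal{C}$, not that $A,B$ are themselves outer measurable; this separation is what will drive the third assertion.

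First I would treat $A\cap B$. Given an outer measurable test set $T$, I would chain the splitting identities: apply $\mathcal{C}(A)$ to $T$, then apply $\mathcal{C}(B)$ to the outer measurable set $A\cap T$, obtaining $M_u(T)=M_u(A\cap B\cap T)+\left(M_u(A\cap B^c\cap T)+M_u(A^c\cap T)\right)$. The key manoeuvre is to collapse the parenthesised sum by re-applying $\mathcal{C}(A)$ to the test set $(A\cap B)^c\cap T$, since $A\cap(A\cap B)^c\cap T=A\cap B^c\cap T$ while $A^c\cap(A\cap B)^c\cap T=A^c\cap T$; this yields exactly $\mathcal{C}(A\cap B)$. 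Along the way each set appearing under $M_u$ is outer measurable: the pieces $A^c\cap T$, $B^c\cap T$ and $A\cap(B^c\cap T)$ are outer measurable directly from $A,B\in\lM$, and $(A\cap B)^c\cap T=(A^c\cap T)\cup(A\cap B^c\cap T)$ is outer measurable as a finite union by the preceding theorem. Finally $A\cap B$ itself is outer measurable because $B$ is outer measurable and $A\in\lM$, so $A\cap B\in\lM$. The case $A\cup B$ is the exact dual: apply $\mathcal{C}(A)$ to $T$, then $\mathcal{C}(B)$ to $A^c\cap T$, and collapse by re-applying $\mathcal{C}(A)$ to $(A\cup B)\cap T$; here outer measurability of $A\cup B$ comes for free from the theorem that the union of two outer measurable sets is outer measurable, so $A\cup B\in\lM$.

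For the third assertion I would first observe that $\mathcal{C}(A\cup B^c)$ is immediate: since $\mathcal{C}(B)\Leftrightarrow\mathcal{C}(B^c)$, the union computation just carried out, with $B$ replaced by $B^c$, shows that $A\cup B^c$ satisfies the splitting identity and that all the relevant pieces are outer measurable. The substantive point is therefore to certify that $A\cup B^c$ is itself outer measurable, and I expect this to be the main obstacle, precisely because outer measurability is \emph{not} preserved under arbitrary operations (Example \ref{examplenoint}) and the complement of a bounded set is unbounded. My plan is to pass to the complement $(A\cup B^c)^c=A^c\cap B=B\bkl A$, which is bounded and is outer measurable because applying $\mathcal{C}(A)$ to the test set $B$ gives $M_u(B)=M_u(A\cap B)+M_u(A^c\cap B)$ with both summands outer measurable; together with the intersection case this even yields $B\bkl A\in\lM$. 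The delicate step is then to transfer outer measurability from the bounded set $B\bkl A$ to its unbounded complement $A\cup B^c$, reconciling $A\cup B^c$ with the requirement that the infimum defining $M_u(A\cup B^c)$ exist in $\sR$; this reconciliation is where the assertion genuinely rests, and is where I would concentrate the work.
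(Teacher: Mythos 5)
Your arguments for $A\cap B$ and $A\cup B$ are correct and are essentially the paper's own: the paper likewise splits $M_u(X)$ first along $A$, then along $B$ inside one of the two pieces, and collapses the two leftover terms by re-applying the splitting identity for $A$ to the test sets $X\cap(A^c\cup B^c)$ and $X\cap(A\cup B)$ respectively, after first listing the outer measurability of every set that appears under $M_u$. Your certification that $A\cap B$ and $A\cup B$ are themselves outer measurable (via the definition applied to the test set $B$, and via the union theorem) is also the right bookkeeping, which the paper leaves implicit.

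For the third assertion you have correctly located the problem but left it unresolved, and it is worth being precise about why. The set the paper actually handles is $A\cap B^c$, not $A\cup B^c$: its third computation ends with $M_u(X)=M_u(X\cap(A\cap B^c))+M_u(X\cap(A\cap B^c)^c)$ and its closing sentence asserts that $A\cup B$, $A\cap B$ and $A\cap B^c$ are L-measurable. The statement of the proposition is evidently a misprint for $A\cap B^c$ (that is, $A\setminus B$). The ``delicate step'' on which you propose to concentrate --- transferring outer measurability from $A^c\cap B$ to its complement $A\cup B^c$ --- cannot be carried out, and the literal claim is false: take $A=B=[0,1]$, so that $A\cup B^c=\sR$; any countable cover of $\sR$ by intervals with convergent total length would, by the density proposition applied to $(1,H)$, have total length at least $H-1$ for every infinitely large $H$, and no element of $\sR$ majorizes all such quantities, so $\sR$ is not outer measurable. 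Your intermediate result --- that $A^c\cap B=B\setminus A$ satisfies the splitting identity and is outer measurable, hence lies in $\lM$ --- is, after interchanging the roles of $A$ and $B$, exactly the claim the paper proves; read against the corrected statement, your argument is complete and coincides with the paper's.
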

\begin{proof}
Let $X\subset\mathcal{R}$ be outer measurable. Then, by definition, the sets
$A\cap X$, $B\cap X$, $A^c\cap X$ and $B^c\cap X$
    are all outer measurable. It follows that the sets
$A\cap B\cap X$, $(A\cup B)\cap X$, $A^c\cap B^c\cap X$, $(A^c\cup B^c)\cap X$, $X\cap A\cap B^c$, $X\cap A^c\cap B$, $(A^c\cup B)\cap X$, and $(A\cup B^c)\cap X$
are all outer measurable; and so are the sets $X\cap (A^c\cup B^c)\cap A$, $X\cap (A^c\cup B^c)\cap A^c$, $X\cap (A\cup B)\cap A$, $X\cap (A\cup B)\cap A^c$, $X\cap (A^c\cup B)\cap A$, and $X\cap (A^c\cup B)\cap A^c$.

Now, we simply check that
    \begin{align*}
        M_u(X)&=M_u(X\cap A)+M_u(X\cap A^c)\\
        &=M_u(X\cap A\cap B)+M_u(X\cap A\cap B^c)+M_u(X\cap A^c)\\
        &=M_u(X\cap A\cap B)+M_u(X\cap (A^c\cup B^c)\cap A)+M_u(X\cap (A^c\cup B^c)\cap A^c)\\
        &=M_u(X\cap A\cap B)+M_u(X\cap (A^c\cup B^c))\\
        &=M_u(X\cap (A\cap B))+M_u(X\cap (A\cap B)^c);
    \end{align*}
    \begin{align*}
        M_u(X)&=M_u(X\cap A^c)+M_u(X\cap A)\\
        &=M_u(X\cap A^c\cap B^c)+M_u(X\cap A^c\cap B)+M_u(X\cap A)\\
        &=M_u(X\cap A^c\cap B^c)+M_u(X\cap (A\cup B)\cap A^c)+M_u(X\cap (A\cup B)\cap A)\\
        &=M_u(X\cap A^c\cap B^c)+M_u(X\cap (A\cup B))\\
        &=M_u(X\cap (A\cup B)^c)+M_u(X\cap (A\cup B));
    \end{align*}
    and
    \begin{align*}
        M_u(X)&=M_u(X\cap A)+M_u(X\cap A^c)\\
        &=M_u(X\cap A\cap B^c)+M_u(X\cap A\cap B)+M_u(X\cap A^c)\\
        &=M_u(X\cap A\cap B^c)+M_u(X\cap (A^c\cup B)\cap A)+M_u(X\cap (A^c\cup B)\cap A^c)\\
        &=M_u(X\cap A\cap B^c)+M_u(X\cap (A^c\cup B))\\
        &=M_u(X\cap (A\cap B^c))+M_u(X\cap (A\cap B^c)^c).
    \end{align*}
Thus, $A\cup B$, $A\cap B$ and $A\cap B^c$ are L-measurable.

\end{proof}
The family $\lM$ of L-measurable sets naturally inherits some of the key properties of the Lebesgue measure in $\R$.

\begin{propn}
    Let $A,B\in\lM$ be given. Then
    $$M(A\cup B)=M(A)+M(B)-M(A\cap B).$$
\end{propn}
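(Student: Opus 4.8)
The plan is to read the identity off directly from the Carath\'eodory-type splitting built into the definition of L-measurability, applied to two well-chosen outer measurable test sets. By the previous proposition, $A\cup B$ and $A\cap B$ both belong to $\lM$, so in particular they are outer measurable and $M$ agrees with $M_u$ on each of them (as it does on $A$ and $B$). The set $B\setminus A=A^c\cap B$ need not be assumed L-measurable; its outer measurability will come for free once we apply the defining identity for $A$ to the outer measurable test set $B$.

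First I would apply the defining identity for the L-measurable set $A$ to the test set $X=A\cup B$, which is outer measurable because $A\cup B\in\lM$. Using the set identities $(A\cup B)\cap A=A$ and $(A\cup B)\cap A^c=B\setminus A$, this gives
$$M(A\cup B)=M_u\big((A\cup B)\cap A\big)+M_u\big((A\cup B)\cap A^c\big)=M(A)+M_u(B\setminus A).$$
Next I would apply the same identity for $A$ to the test set $X=B$, which is outer measurable because $B\in\lM$; this both furnishes the outer measurability of $A^c\cap B=B\setminus A$ and yields
$$M(B)=M_u(A\cap B)+M_u(A^c\cap B)=M(A\cap B)+M_u(B\setminus A),$$
so that $M_u(B\setminus A)=M(B)-M(A\cap B)$. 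Substituting this into the first displayed equation gives the claimed formula.

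I expect no genuine analytic obstacle here; the entire argument rests on the additivity already encoded in the definition of L-measurability together with a handful of elementary set identities. The only points demanding any care are bookkeeping ones: checking that $(A\cup B)\cap A^c=B\setminus A$ and $B\cap A^c=B\setminus A$, and confirming that the two test sets $A\cup B$ and $B$ are genuinely outer measurable so that the defining identity for $A$ legitimately applies and $M_u$ is well defined on $B\setminus A$. Both facts are immediate from the preceding proposition and the hypothesis $A,B\in\lM$, so the resulting proof is short.
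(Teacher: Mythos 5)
Your proof is correct and follows essentially the same route as the paper's: apply the Carath\'eodory-type identity for $A$ to the test set $A\cup B$ to get $M(A\cup B)=M(A)+M_u(B\setminus A)$, then apply it to the test set $B$ to rewrite $M_u(B\setminus A)$ as $M(B)-M(A\cap B)$. The paper merely compresses these two applications into a single chain of equalities, so there is nothing to add.
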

\begin{proof}
    We already know that $A\cup B,A\cap B\in\lM$. It follows that
    \begin{align*}
        M(A\cup B)&=M_u(A\cup B)\\
        &=M_u((A\cup B)\cap A)+M_u((A\cup B)\cap A^c))\\
        &=M_u(A)+M_u(B\cap A^c)\\
        &=M_u(A)+M_u(B)-M_u(A\cap B)\\
        &=M(A)+M(B)-M(A\cap B).
    \end{align*}
\end{proof}
This family $\lM$ also proves to be a direct improvement over the S-measure by strictly expanding the family of S-measurable sets.
\begin{propn}
Let $\{J_n\}$ be a sequence of pairwise disjoint intervals in $\mathcal{R}$ such that $\lim\limits_{n\to\infty}l(J_n)=0$. Then $\bigcup\limits_{n=1}^{\infty}J_n$ is L-measurable, and
 $$M\left(\bigcup\limits_{n=1}^{\infty}J_n\right)=\sum\limits_{n=1}^{\infty}l(J_n).$$
\end{propn}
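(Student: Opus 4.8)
The plan is to verify the two requirements in the definition of L-measurability for the set $A:=\bigcup_{n=1}^\infty J_n$: first that $A$ is itself outer measurable, and then that it splits every outer measurable set additively. I would begin by recording the outer measure of $A$. A single interval $J_1$ is outer measurable with $M_u(J_1)=l(J_1)$, since any interval is dense in itself, so the corollary on dense subsets of an interval (applied with $X=J_1=I$) applies. Since the remaining intervals $\{J_n\}_{n\ge 2}$ are pairwise disjoint, each disjoint from $J_1$, and satisfy $\lim_{n\to\infty}l(J_n)=0$, the proposition on adjoining a vanishing-length family of pairwise disjoint intervals to an outer measurable set disjoint from them yields that $A=J_1\cup\bigcup_{n\ge 2}J_n$ is outer measurable with $M_u(A)=l(J_1)+\sum_{n\ge2}l(J_n)=\sum_{n=1}^\infty l(J_n)$, where the sum converges by Corollary \ref{corinfinitesum}.

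Next I would fix an arbitrary outer measurable set $B$ and consider the decomposition $B=(A\cap B)\cup(A^c\cap B)$. Each piece is outer measurable by the machinery already in place: $A\cap B=B\cap\bigcup_{n=1}^\infty J_n$ is outer measurable by the proposition on intersecting an outer measurable set with a pairwise disjoint family of intervals of vanishing length, while $A^c\cap B=B\setminus\bigcup_{n=1}^\infty J_n$ is outer measurable by Lemma \ref{A-cupInisum}. This is exactly the data needed to feed into the crucial additivity step.

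The additivity $M_u(B)=M_u(A\cap B)+M_u(A^c\cap B)$ is where Proposition \ref{AcupBisadittive} does the real work. Setting $A'=A\cap B\subseteq\bigcup_{n=1}^\infty J_n$ and $B'=A^c\cap B\subseteq\left(\bigcup_{n=1}^\infty J_n\right)^c$, both of which are outer measurable, and using that the $J_n$ are pairwise disjoint intervals with $l(J_n)\to 0$, Proposition \ref{AcupBisadittive} gives that $A'\cup B'=B$ is outer measurable with $M_u(B)=M_u(A')+M_u(B')$, which is precisely the required identity. Hence $A\in\lM$, and by definition $M(A)=M_u(A)=\sum_{n=1}^\infty l(J_n)$.

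The proof is thus largely a matter of assembling the earlier results in the correct order, and I expect no serious obstacle. The one point to get right is the recognition that the family $\{J_n\}$ \emph{itself} furnishes the separating intervals required to invoke Proposition \ref{AcupBisadittive}: once $A\cap B$ and $A^c\cap B$ are known to be outer measurable and to lie inside $\bigcup J_n$ and its complement respectively, the decisive additivity drops out with no further estimation. The only mildly delicate preliminary is confirming outer measurability of those two pieces before the proposition is applied.
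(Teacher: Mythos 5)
Your proposal is correct and follows essentially the same route as the paper: the paper's own (very terse) proof likewise decomposes an arbitrary outer measurable test set $B$ as $\left(B\cap\bigcup_{n=1}^{\infty}J_n\right)\cup\left(B\setminus\bigcup_{n=1}^{\infty}J_n\right)$ and invokes Lemma \ref{A-cupInisum} together with Proposition \ref{AcupBisadittive} to get the additivity. You merely spell out the supporting details (outer measurability of $\bigcup_{n=1}^{\infty}J_n$ and of the two pieces) that the paper leaves implicit.
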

\begin{proof}
    The result follows directly from Lemma \ref{A-cupInisum} and Proposition \ref{AcupBisadittive} and from the fact that
    $$A=\left(A\cap\bigcup\limits_{n=1}^{\infty}J_n \right)\cup\left(A\bkl\bigcup\limits_{n=1}^{\infty}J_n\right).$$
\end{proof}
\begin{propn}
    Let $C\subset\mathcal{R}$ be outer measurable with $M_u(C)=0$. Then $C$ is L-measurable with M(C)=0.
\end{propn}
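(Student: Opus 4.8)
The plan is to verify the definition of L-measurability directly. I would fix an arbitrary outer measurable set $B\subset\sR$ and show that $C\cap B$ and $C^c\cap B$ are both outer measurable and satisfy $M_u(B)=M_u(C\cap B)+M_u(C^c\cap B)$. Everything will follow from two applications of Proposition \ref{0measuresets}, always using $C$ itself as the set of outer measure zero.

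First I would handle $C\cap B$. Since $C\cap B\subseteq C$ and $M_u(C)=0$, Proposition \ref{0measuresets} gives at once that $C\cap B$ is outer measurable with $M_u(C\cap B)=0$.

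Next I would handle $C^c\cap B$, and this is where the only real care is needed. The idea is to recognize the set identity
$$C^c\cap B=B\setminus(C\cap B),$$
obtained from $B\setminus(C\cap B)=B\cap(C^c\cup B^c)=B\cap C^c$. Then I would apply Proposition \ref{0measuresets} a second time, with $B$ in the role of the ambient outer measurable set, $C$ in the role of the null set, and the subset $C\cap B\subseteq C$ playing the role of the deleted subset. The proposition then tells us both that $B\setminus(C\cap B)$ is outer measurable (a sequence of covers converging to $B$ also converges to $B\setminus(C\cap B)$) and that $M_u(B\setminus(C\cap B))=M_u(B)$. Hence $C^c\cap B$ is outer measurable with $M_u(C^c\cap B)=M_u(B)$.

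Combining the two computations yields $M_u(C\cap B)+M_u(C^c\cap B)=0+M_u(B)=M_u(B)$, which is precisely the additivity required by the definition. Since $B$ was arbitrary, this shows $C\in\lM$, and by definition $M(C)=M_u(C)=0$. The main (and essentially only) obstacle is the second step: one must make sure Proposition \ref{0measuresets} is delivering genuine outer measurability of $B\setminus(C\cap B)$ rather than merely assigning it a value, and one must not confuse the ambient set $B$ with the null set $C$ when matching the hypotheses; the set identity $C^c\cap B=B\setminus(C\cap B)$ is the small piece of bookkeeping that makes the whole argument go through.
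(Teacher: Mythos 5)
Your proof is correct and follows essentially the same route as the paper's: both arguments consist of two applications of Proposition \ref{0measuresets}, one giving $M_u(C\cap B)=0$ and the other giving $M_u(B\setminus C)=M_u(B)$ (the paper simply deletes $C$ itself from the ambient set rather than $C\cap B$, which yields the same set $B\cap C^c$). Your extra care in checking the set identity and in confirming that the proposition delivers genuine outer measurability of the difference set is sound but does not change the argument.
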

\begin{proof}
    Let $A\subset\mathcal{R}$ be outer measurable. Then, using Proposition \cref{0measuresets}, we have that
    $$M_u(A)=M_u(A\bkl C)=M_u(A\bkl C)+0=M_u(A\bkl C)+M_u(A\cap C)=M_u(A\cap C)+M_u(A\cap C^c). $$
    Thus, $C$ is L-measurable, with L-measure $M(C)=M_u(C)=0$.
\end{proof}
\begin{col}
     Let $A\subset\mathcal{R}$ be  S-measurable. Then $A$ is L-measurable and $M(A)=M_s(A)$.
\end{col}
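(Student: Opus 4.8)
The plan is to lean on the characterization of S-measurable sets in Theorem~\ref{thmcharac} and then assemble the pieces using the structural results already proved for the outer measure and for $\lM$. First I would apply Theorem~\ref{thmcharac} to write $A$ as a disjoint union $A = U \cup S$, where $U = \bigcup_{n=1}^\infty K_n$ is a countable union of pairwise disjoint intervals with $\sum_{n=1}^\infty l(K_n) = M_s(A)$, and where $M_s(S) = 0$. Since the series $\sum_{n=1}^\infty l(K_n)$ converges in $\mathcal{R}$, Corollary~\ref{corinfinitesum} gives $\lim_{n\to\infty} l(K_n) = 0$, so the proposition asserting that a countable union of pairwise disjoint intervals with a null length sequence is L-measurable applies directly to $U$; this yields $U \in \lM$ with $M(U) = \sum_{n=1}^\infty l(K_n) = M_s(A)$.

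Next I would show that $M_u(S) = 0$. The hypothesis $M_s(S) = 0$ means precisely that $S$ admits a nested family of interval covers $\{J_n^k\}$ with $\lim_{k\to\infty}\sum_{n=1}^\infty l(J_n^k) = 0$. Since every cover-sum is nonnegative, $0$ is a lower bound for the set $\left\{\sum_{n=1}^\infty l(S_n) : S_n \text{ intervals}, \ S \subseteq \bigcup_{n=1}^\infty S_n\right\}$, while the covers $\{J_n^k\}$ produce sums smaller than any prescribed positive element of $\mathcal{R}$. Hence the infimum exists and equals $0$, so $S$ is outer measurable with $M_u(S) = 0$. The proposition that an outer measurable set of outer measure $0$ is L-measurable then gives $S \in \lM$ with $M(S) = 0$.

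Finally, since $U, S \in \lM$ and $\lM$ is closed under finite unions, $A = U \cup S$ is L-measurable. To identify the value, I would note that $A$ is outer measurable (being L-measurable) and apply Proposition~\ref{0measuresets} with the null set $S$: because $A \setminus S = U$, we obtain $M_u(A) = M_u(A \setminus S) = M_u(U) = M_s(A)$. Therefore $M(A) = M_u(A) = M_s(A)$, as claimed.

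I expect the one genuinely substantive point — everything else being an application of already-proved propositions — to be the passage from $M_s(S) = 0$ to $M_u(S) = 0$. The two measures are built through different devices (nested inner and outer covers whose sums are forced together, versus an infimum over outer covers), so the argument must confirm that for a null set the inner covers are immaterial and that the infimum defining $M_u$ genuinely exists and equals $0$ in the non-Archimedean setting, where infima need not exist in general. Once that bridge is in place, the remaining steps are bookkeeping with the closure and additivity results for $\lM$ and the outer measure.
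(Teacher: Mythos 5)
Your proof is correct and follows exactly the route the paper intends: the corollary is stated without proof precisely because it is meant to be assembled from Theorem~\ref{thmcharac}, the L-measurability of countable disjoint unions of intervals with null length sequence, the L-measurability of outer-measure-zero sets, closure of $\lM$ under finite unions, and Proposition~\ref{0measuresets}. Your identification of the one substantive step (passing from $M_s(S)=0$ to the existence of $M_u(S)=0$ as a genuine infimum) and your handling of it are also sound.
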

One of the key results in probability theory is that of the continuity of the probability function. Despite not being able to get the full result due to the intrinsic characteristics of the set, we manage to get very close to it.
\begin{lemma}\label{lemmaprobunion0}
    For each $n\in\setN$ let $A_n\subset\mathcal{R}$ be L-measurable, with $\lim\limits_{n\to\infty}M(A_n)=0$, and let $X\subset\mathcal{R}$ be outer measurable. Then
\[
    \lim\limits_{N\to\infty}M_u\left(X\cap\bigcup\limits_{n=1}^NA_n\right)=M_u\left(X\cap\bigcup\limits_{n=1}^\infty A_n\right)
\]
\end{lemma}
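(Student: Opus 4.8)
The plan is to read this as a continuity-from-below statement for the increasing sequence $B_N := X\cap\bigcup_{n=1}^N A_n$, whose union is $B_\infty := X\cap\bigcup_{n=1}^\infty A_n$. First I would record that each $B_N$ is outer measurable: a finite union $\bigcup_{n=1}^N A_n$ of L-measurable sets is again L-measurable (by the proposition that $\lM$ is closed under finite unions, applied inductively), so intersecting it with the outer measurable set $X$ yields an outer measurable set directly from the definition of L-measurability. Monotonicity of $M_u$ — immediate from the definition, since any cover of a larger set covers a smaller one — then gives $M_u(B_N)\le M_u(B_{N+1})$, and likewise $M_u(X\cap A_{N+1})\le M_u(A_{N+1})=M(A_{N+1})$.

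The first substantive step is to show that $(M_u(B_N))_N$ converges in $\sR$. Writing $B_{N+1}=B_N\cup(X\cap A_{N+1})$ and invoking the subadditivity lemma (if $A\subseteq B\cup C$ then $M_u(A)\le M_u(B)+M_u(C)$), I get
\[
0\le M_u(B_{N+1})-M_u(B_N)\le M_u(X\cap A_{N+1})\le M(A_{N+1}).
\]
Since $M(A_{N+1})\to 0$ by hypothesis, Corollary \ref{corseqconv} guarantees that $x:=\lim_{N\to\infty}M_u(B_N)$ exists in $\sR$. Next I would set $D_N:=B_N\bkl B_{N-1}$ (with $B_0=\emptyset$); applying the L-measurability of $\bigcup_{n=1}^{N-1}A_n$ to the outer measurable set $B_N$ gives the additivity $M_u(B_N)=M_u(B_{N-1})+M_u(D_N)$, so the partial sums telescope: $\sum_{N=1}^K M_u(D_N)=M_u(B_K)\to x$, and $\sum_{N=1}^\infty M_u(D_N)=x$ converges by Corollary \ref{corinfinitesum}.

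It then remains to prove $M_u(B_\infty)=x$, which will simultaneously establish that $B_\infty$ is outer measurable. The lower bound is easy: any cover $\{S_n\}$ of $B_\infty$ covers every $B_N$, so $\sum_n l(S_n)\ge M_u(B_N)$ for all $N$, and passing to the limit gives $\sum_n l(S_n)\ge x$. For the matching upper approach I would exhibit covers of $B_\infty$ whose sums converge to $x$: for each fixed $k$, cover $B_k$ by a collection with sum within $d^k$ of $M_u(B_k)$ and cover each tail increment $D_m$, $m>k$, within $d^m$ of $M_u(D_m)$. Their union covers $B_\infty=B_k\cup\bigcup_{m>k}D_m$, and its total length lies between $x$ and $x+d^k+\sum_{m>k}d^m$, hence tends to $x$ as $k\to\infty$.

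Having a lower bound $x$ together with cover-sums converging to $x$ forces the infimum defining $M_u(B_\infty)$ to exist and equal $x$: a lower bound strictly exceeding $x$ would contradict the existence of cover-sums arbitrarily close to $x$, since limits preserve $\le$. This is the delicate point, because infima need not exist in $\sR$; the whole argument is arranged precisely so that $x$ is pinned from both sides. The main obstacle I anticipate is the bookkeeping of the tail double sum $\sum_{m>k}\sum_n l(\cdot)$ — one must verify it converges in the valuation topology and that the infinitesimal error $\sum_{m>k}d^m$ genuinely vanishes — but the non-Archimedean convergence criteria (Corollaries \ref{corseqconv} and \ref{corinfinitesum}) render this routine once the telescoping identity for the $M_u(D_N)$ is in place.
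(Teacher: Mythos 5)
Your proposal is correct and follows essentially the same route as the paper: both establish convergence of $M_u\left(X\cap\bigcup_{n=1}^N A_n\right)$ via the estimate $0\le M_u(B_{N+1})-M_u(B_N)\le M_u(A_{N+1})\to 0$ together with Corollary \ref{corseqconv}, obtain the lower bound from the observation that any cover of $X\cap\bigcup_{n=1}^\infty A_n$ covers each $B_N$, and then exhibit covers of the infinite intersection whose sums converge to the limit $x$. The only (harmless) difference is that you build the tail of your covers from near-optimal covers of the disjointified increments $D_m=B_m\setminus B_{m-1}$, which forces you to first prove the telescoping identity $M_u(B_N)=M_u(B_{N-1})+M_u(D_N)$, whereas the paper covers the tail directly by near-optimal covers of the sets $A_m$ themselves (using $X_\infty\subseteq X_k\cup\bigcup_{m>k}A_m$ and $\sum_{m>k}M(A_m)\to 0$), which avoids that extra step.
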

\begin{proof}
    For each $N\in\N$ let $X_N:=X\cap\bigcup\limits_{n=1}^NA_n$, and let $X_\infty:=X\cap\bigcup\limits_{n=1}^\infty A_n$. Then we have that
    \[
M_u(X_{N+1})\leq M_u(X_N)+M_u(A_{N+1})\text{ for all }N\in\N.
    \]
  Since the outer measures of $A_n$ form a null sequence, then the sequence $M_u(X_N)$ is Cauchy and therefore convergent in $\mathcal{R}$. We define
    \[
        t:=\lim_{N\to\infty}M_u(X_N).
    \]
    Given that $X_m$ and $A_k$ are outer measurable for each $m\in\N$ and for each $k\in\N$, we can find covers $\{J_n^m\}$ and $\{I_n^k\}$ of $X_m$ and $A_k$, respectively, such that
    \[
        \sum\limits_{n=1}^{\infty}l(J^m_n)-M_u(X_m)<d^m\text{ and }\sum\limits_{n=1}^{\infty}l(I^k_n)-M_u(A_k)<d^k.
    \]
    We define $\{S_n^k\}:=\{J_n^k\}\cup\bigcup\limits_{m=k+1}^\infty \{I_n^m\}$. The, clearly, $\{S_n^k\}$ is a covering of $X_\infty$ and
    \[
        \lim_{k\to\infty}\sum\limits_{n=1}^{\infty}l(S_n^k)=
        \lim_{k\to\infty}\left(\sum\limits_{n=1}^{\infty}l(J_n^k)+\sum\limits_{m=k+1}^{\infty}\sum\limits_{n=1}^{\infty}l(I_n^m)\right)=t.
    \]
    Finally, if $\{P_n\}$ covers $X_\infty$, then it covers $X_N$ for all $N\in\N$. Thus,
   $M_u(X_N)\leq \sum\limits_{n=1}^{\infty}l(P_n)$ for all $N\in\N$ and hence
    \[
        \lim_{N\to\infty}M_u(X_N)=t\leq \sum\limits_{n=1}^{\infty}l(P_n).
    \]
    We conclude that $X_\infty$ is outer measurable and $t=M_u(X_\infty)$; that is, $\lim_{N\to\infty}M_u(X_N)=M_u(X_\infty)$ or
    \[
    \lim\limits_{N\to\infty}M_u\left(X\cap\bigcup\limits_{n=1}^NA_n\right)=M_u\left(X\cap\bigcup\limits_{n=1}^\infty A_n\right).
\]
\end{proof}
\begin{col}\label{xcaplimunion}
 For each $n\in\setN$ let $A_n\subset\mathcal{R}$ be L-measurable such that $\lim\limits_{N\to\infty}M\left(\bigcup\limits_{n=1}^N A_n\right)$ exists in $\sR$, and let $X\subset\mathcal{R}$ be outer measurable. Then
\[
    \lim\limits_{N\to\infty}M_u\left(X\cap\bigcup\limits_{n=1}^NA_n\right)=M_u\left(X\cap\bigcup\limits_{n=1}^\infty A_n\right)
\]
\end{col}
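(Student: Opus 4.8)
The plan is to reduce Corollary \ref{xcaplimunion} to Lemma \ref{lemmaprobunion0} by disjointifying the sets $A_n$. First I would set $B_1 := A_1$ and, for $n\ge 2$,
$$B_n := A_n \bkl \bigcup_{m=1}^{n-1} A_m = A_n \cap \bigcap_{m=1}^{n-1} A_m^c.$$
Since L-measurability is symmetric under complementation (the defining identity treats $A$ and $A^c$ on the same footing, so $A\in\lM$ iff $A^c\in\lM$), each $A_m^c$ lies in $\lM$; and since $\lM$ is closed under finite intersections, each $B_n$ is L-measurable. The $B_n$ are pairwise disjoint by construction, and the telescoping identity $\bigcup_{n=1}^N B_n = \bigcup_{n=1}^N A_n$ holds for every $N\in\N$, whence also $\bigcup_{n=1}^\infty B_n = \bigcup_{n=1}^\infty A_n$.

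Next I would invoke finite additivity of $M$ on $\lM$. From the inclusion–exclusion proposition above, two disjoint L-measurable sets $U,V$ satisfy $M(U\cup V)=M(U)+M(V)$ (using $M(\emptyset)=0$), and a routine induction then gives
$$M\!\left(\bigcup_{n=1}^N A_n\right)=M\!\left(\bigcup_{n=1}^N B_n\right)=\sum_{n=1}^N M(B_n)$$
for every $N$. The hypothesis that $\lim_{N\to\infty} M\!\left(\bigcup_{n=1}^N A_n\right)$ exists in $\sR$ therefore says precisely that the series $\sum_{n=1}^\infty M(B_n)$ converges; by Corollary \ref{corinfinitesum} this forces $\lim_{n\to\infty}M(B_n)=0$.

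At this point the sequence $(B_n)$ satisfies exactly the hypotheses of Lemma \ref{lemmaprobunion0}: each $B_n$ is L-measurable and $\lim_{n\to\infty}M(B_n)=0$. Applying that lemma to $(B_n)$ and to the outer measurable set $X$ yields
$$\lim_{N\to\infty} M_u\!\left(X\cap\bigcup_{n=1}^N B_n\right)=M_u\!\left(X\cap\bigcup_{n=1}^\infty B_n\right),$$
and rewriting each union of $B_n$'s as the corresponding union of $A_n$'s via the identity from the first step gives the stated conclusion. The only genuine point to verify is the bookkeeping in the first two steps—that the disjointified pieces $B_n$ remain in $\lM$ and that $M$ is finitely additive on them, so that the hypothesis on the partial unions translates into summability of $(M(B_n))$. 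Once that is in place, the result is immediate from the lemma, so I do not expect a substantial obstacle beyond this routine verification.
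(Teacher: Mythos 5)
Your proposal is correct and follows essentially the same route as the paper: both proofs disjointify the sequence by setting $B_n=A_n\setminus\bigcup_{m=1}^{n-1}A_m$, deduce from the existence of $\lim_{N\to\infty}M\left(\bigcup_{n=1}^N A_n\right)$ that $M(B_n)\to 0$, and then apply Lemma \ref{lemmaprobunion0} to $(B_n)$ and $X$. The one flaw is your justification that each $B_n\in\lM$: the claim that $A\in\lM$ implies $A^c\in\lM$ is false in this setting, since $A^c$ is in general not even outer measurable (for instance $[0,1]^c$ contains the interval $(d^{-k},2d^{-k})$ for every $k\in\N$, so no countable cover of it by intervals can have a convergent sum of lengths); instead write $B_n=(\cdots(A_n\cap A_1^c)\cap\cdots)\cap A_{n-1}^c$ and iterate the closure of $\lM$ under the operation $(U,V)\mapsto U\cap V^c$ established in the proposition on finite unions and intersections, after which your argument goes through unchanged.
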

\begin{proof}
    Set $A_0=\emptyset$. Then, since $\lim\limits_{N\to\infty}M\left(\bigcup\limits_{n=0}^N A_n\right)=\lim\limits_{N\to\infty}M\left(\bigcup\limits_{n=1}^N A_n\right)$ exists in $\sR$, we have that
    \[
        M\left(\bigcup\limits_{n=0}^{N+1} A_n\right)-M\left(\bigcup\limits_{n=0}^N A_n\right)=M\left(\bigcup\limits_{n=0}^{N+1} A_n\bkl \bigcup\limits_{n=0}^N A_n\right)\underset{N\to\infty}{\to} 0.
    \]
    For each $N\in\N$, let $B_N=\bigcup\limits_{n=0}^{N} A_n\bkl \bigcup\limits_{n=0}^{N-1} A_n$. Then the new sequence satisfies the following
    \[
    \bigcup\limits_{n=1}^NB_n=\bigcup\limits_{n=1}^N A_n, \bigcup\limits_{n=1}^\infty B_n=\bigcup\limits_{n=1}^\infty  A_n\text{ and } \lim\limits_{n\to\infty}M(B_n)=0.
    \]
    The result follows from Lemma \ref{lemmaprobunion0}.
\end{proof}
\begin{col}
    For each $n\in\setN$ let $A_n\subset\mathcal{R}$ be L-measurable such that $\lim\limits_{N\to\infty}M\left(\bigcup\limits_{n=1}^N A_n\right)$ exists in $\sR$. Then
\[
    \lim\limits_{N\to\infty}M_u\left(\bigcup\limits_{n=1}^NA_n\right)=M_u\left(\bigcup\limits_{n=1}^\infty A_n\right).
\]
\end{col}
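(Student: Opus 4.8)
The plan is to deduce this from the machinery already built for Lemma \ref{lemmaprobunion0} and Corollary \ref{xcaplimunion}; indeed the statement is exactly the conclusion of Corollary \ref{xcaplimunion} with the ambient set $X$ dropped, and the whole point of the proof will be that no enclosing outer-measurable set is actually needed here.

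First I would reduce to a disjoint sequence exactly as in the proof of Corollary \ref{xcaplimunion}. Setting $A_0:=\emptyset$ and $B_N:=\bigcup_{n=0}^N A_n\bkl\bigcup_{n=0}^{N-1}A_n$, each $B_N$ lies in $\lM$ because $\lM$ is closed under finite unions, intersections and relative complements; the $B_N$ are pairwise disjoint; and $\bigcup_{n=1}^N B_n=\bigcup_{n=1}^N A_n=:U_N$ while $\bigcup_{n=1}^\infty B_n=\bigcup_{n=1}^\infty A_n=:U_\infty$. Since the $B_n$ are disjoint and L-measurable, additivity of $M$ telescopes to give $M(B_N)=M(U_N)-M(U_{N-1})$, which tends to $0$ because $\lim_N M(U_N)$ is assumed to exist. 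Note also that each $U_N$ is a finite union of L-measurable sets, hence itself L-measurable and in particular outer measurable, so $M_u(U_N)=M(U_N)$ and the left-hand limit $t:=\lim_N M_u(U_N)$ exists in $\sR$ by hypothesis.

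The core of the argument would then copy the proof of Lemma \ref{lemmaprobunion0}, with $X\cap\bigcup_{n\le N}A_n$ replaced by $U_N$ and the sets $A_n$ replaced by the $B_n$. Concretely, for each $m$ I would pick a cover $\{J_n^m\}$ of $U_m$ and a cover $\{I_n^m\}$ of $B_m$ whose total lengths exceed $M(U_m)$ and $M(B_m)$ by less than $d^m$, and set $\{S_n^k\}:=\{J_n^k\}\cup\bigcup_{m>k}\{I_n^m\}$. This is a cover of $U_\infty$ because $U_\infty=U_k\cup\bigcup_{m>k}B_m$. The crucial non-Archimedean input is Corollary \ref{corinfinitesum}: since $M(B_m)\to 0$ and $d^m\to 0$, the series $\sum_m M(B_m)$ and $\sum_m d^m$ converge, so the tail $\sum_{m>k}\bigl(M(B_m)+d^m\bigr)$ vanishes as $k\to\infty$; combined with $\sum_n l(J_n^k)\to t$ (which follows from $M(U_k)\le\sum_n l(J_n^k)<M(U_k)+d^k$ and $M(U_k)\to t$), this forces $\sum_n l(S_n^k)\to t$. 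For the reverse inequality, any cover $\{P_n\}$ of $U_\infty$ also covers each $U_N$, and since $U_N$ is outer measurable this yields $M_u(U_N)\le\sum_n l(P_n)$ and hence $t\le\sum_n l(P_n)$ in the limit. Together these show $U_\infty$ is outer measurable with $M_u(U_\infty)=t$, which is the claim.

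The step I expect to be the genuine obstacle is establishing outer measurability of $U_\infty$, that is, simultaneously producing covers whose lengths descend to $t$ and showing that $t$ bounds every cover from below; this is precisely the place where, in Corollary \ref{xcaplimunion}, one leaned on the outer measurability of $X$. Here I would emphasize that the replacement ingredient is the fact that each finite union $U_N$ is outer measurable for free, so no enclosing outer-measurable set is required, and the completeness results (Corollaries \ref{corseqconv} and \ref{corinfinitesum}) then do the analytic work of making the tail sums disappear.
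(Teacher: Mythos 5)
Your proof is correct, but it takes a different route from the paper's. The paper's proof is a three-line reduction: after the same disjointification (hidden in a ``without loss of generality, $M(A_m)\to 0$''), it picks covers $\{J_n^m\}$ of the $A_m$ with $\sum_n l(J_n^m)\to 0$ and sets $X:=\bigcup_{m,n}J_n^m$, which is outer measurable as a countable union of intervals with lengths tending to zero; since $X\supseteq\bigcup_n A_n$, one has $X\cap\bigcup_{n\le N}A_n=\bigcup_{n\le N}A_n$ and Corollary \ref{xcaplimunion} applies verbatim. So where you argue that no enclosing outer-measurable set is needed and re-run the argument of Lemma \ref{lemmaprobunion0} with $X$ deleted, the paper instead manufactures an enclosing outer-measurable set out of covers of the pieces and cites the corollary. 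Your version is self-contained and makes explicit which ingredients (outer measurability of each finite union $U_N$, plus Corollaries \ref{corseqconv} and \ref{corinfinitesum} to kill the tails) are actually doing the work, at the cost of essentially duplicating the proof of Lemma \ref{lemmaprobunion0}; the paper's version is shorter and reuses the existing machinery, at the cost of one extra observation (that the union of all the chosen covering intervals is itself outer measurable). Both arguments are sound, and your verification of the two halves --- covers descending to $t$, and $t$ as a lower bound for every cover of $U_\infty$ via the outer measurability of each $U_N$ --- matches what the cited lemma delivers.
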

\begin{proof}
    Without loss of generality, we may assume that $\lim\limits_{m\to\infty}M(A_m)=0$. Then, for each $m\in\N$ we can find a cover $\{J_n^m\}$ of $A_m$ such that $\lim\limits_{m\to\infty}\left(\sum\limits_{n=1}^\infty l(J_n^m)\right)=0$. Thus, $X:=\bigcup\limits_{m,n=1}^\infty J_n^m$ is outer measurable and the result follows from Corollary \ref{xcaplimunion}.
\end{proof}
\begin{lemma}\label{limintersection}
     For each $n\in\setN$ let $A_n\subset\mathcal{R}$ be L-measurable such that $\lim\limits_{N\to\infty}M\left(\bigcap\limits_{n=1}^N A_n\right)$ exists in $\sR$, and let $X\subset\mathcal{R}$ be outer measurable. Then $X\cap\bigcap\limits_{n=1}^\infty A_n$ is outer measurable, and
\[
    \lim\limits_{N\to\infty}M_u\left(X\cap\bigcap\limits_{n=1}^NA_n\right)=M_u\left(X\cap\bigcap\limits_{n=1}^\infty A_n\right).
\]
\end{lemma}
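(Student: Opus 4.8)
The plan is to reduce this decreasing-intersection statement to the already-established increasing-union result, Corollary~\ref{xcaplimunion}, by complementing everything relative to the first set. Write $E_N := \bigcap_{n=1}^N A_n$, so that each $E_N$ lies in $\lM$ (finite intersections of L-measurable sets are L-measurable), the sequence decreases, and $\bigcap_{n=1}^\infty A_n = \bigcap_N E_N$. Set $F_N := E_1 \cap E_N^c$, which lies in $\lM$ by the closure of $\lM$ under the operation $A \cap B^c$; the $F_N$ increase, and $\bigcup_N F_N = E_1 \bkl \bigcap_m A_m$. First I would record the additivity identity coming from testing the L-measurable set $E_N$ against the outer measurable set $E_1$, namely $M(E_1) = M(E_N) + M(F_N)$, so that $\lim_N M(F_N) = M(E_1) - \lim_N M(E_N)$ exists by hypothesis.

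With $\lim_N M\!\left(\bigcup_{n=1}^N F_n\right) = \lim_N M(F_N)$ in hand, Corollary~\ref{xcaplimunion} applies to the L-measurable sequence $\{F_n\}$ and the outer measurable set $X$: the set $Z := X \cap \bigcup_{n=1}^\infty F_n$ is outer measurable and $M_u(Z) = \lim_N M_u(X \cap F_N)$. In parallel, testing $E_N \in \lM$ against the outer measurable set $X \cap E_1$ gives $M_u(X \cap E_1) = M_u(X \cap E_N) + M_u(X \cap F_N)$, whence $t := \lim_N M_u(X \cap E_N)$ exists and equals $M_u(X \cap E_1) - M_u(Z)$. Writing $Y := X \cap E_1$ and $Y_\infty := X \cap \bigcap_m A_m$, a short set computation shows $Y = Y_\infty \cup Z$ is a disjoint union of the outer measurable sets $Y_\infty$ (to be proved) and $Z$.

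It then remains to show that $Y_\infty$ is outer measurable with $M_u(Y_\infty) = t$. For the upper estimate, for each $N$ I would take a cover of the outer measurable set $X \cap E_N$ whose length-sum lies within $d^N$ of $M_u(X \cap E_N)$; since $Y_\infty \subseteq X \cap E_N$, each such cover also covers $Y_\infty$, and the sums tend to $t$. The delicate point, and the main obstacle, is the lower bound: in contrast to the union case, a cover of the intersection $Y_\infty$ need not cover the larger sets $X \cap E_N$, so monotonicity alone yields nothing. Here I would exploit the partition $Y = Y_\infty \cup Z$: given any cover $\{S_n\}$ of $Y_\infty$ and any cover $\{P_n\}$ of $Z$, the combined collection $\{S_n\} \cup \{P_n\}$ covers $Y$, so $\sum_n l(S_n) + \sum_n l(P_n) \ge M_u(Y)$. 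Taking the infimum over all covers of $Z$ gives $M_u(Y) - \sum_n l(S_n) \le M_u(Z)$, that is, $\sum_n l(S_n) \ge M_u(Y) - M_u(Z) = t$.

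Thus $t$ is a lower bound for the length-sums of every cover of $Y_\infty$ and is approached from above by the covers built from the $X \cap E_N$, so the defining infimum exists and equals $t$. This simultaneously establishes the outer measurability of $Y_\infty = X \cap \bigcap_{n=1}^\infty A_n$ and the identity $\lim_{N\to\infty} M_u\!\left(X \cap \bigcap_{n=1}^N A_n\right) = M_u\!\left(X \cap \bigcap_{n=1}^\infty A_n\right)$, completing the proof.
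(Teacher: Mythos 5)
Your argument is correct, but it is organized quite differently from the paper's. The paper works with the decreasing sets $B_N=\bigcap_{n=1}^N A_n$ and $X_N=X\cap B_N$ directly: it gets the Cauchy property of $M_u(X_N)$ from the sandwich $X_{N+1}\subseteq X_N\subseteq X_{N+1}\cup(B_N\setminus B_{N+1})$ together with $M_u(B_N\setminus B_{N+1})=M(B_N)-M(B_{N+1})\to 0$, produces covers of $X_\infty$ from near-optimal covers of each $X_N$, and establishes the lower bound by contradiction, augmenting a hypothetical too-small cover of $X_\infty$ with covers of the tail annuli $B_m\setminus B_{m+1}$, $m\geq N$, to obtain an impossible cover of $X_N$. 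You instead complement inside $E_1=A_1$, turning the decreasing intersection into the increasing union of $F_N=E_1\cap E_N^c$, invoke Corollary \ref{xcaplimunion} to get outer measurability of $Z=X\cap\bigcup_N F_N$, and then use the Carath\'eodory identity $M_u(X\cap E_1)=M_u(X\cap E_N)+M_u(X\cap F_N)$ plus the two-set subadditivity trick on the disjoint decomposition $X\cap E_1=Y_\infty\cup Z$ to pin down $M_u(Y_\infty)=M_u(X\cap E_1)-M_u(Z)=t$. The two lower-bound arguments are morally the same --- both control covers of the difference between $X\cap E_N$ (or $X\cap E_1$) and $Y_\infty$ --- but you package that difference as a single outer measurable set handled by the union corollary, whereas the paper covers it by hand with the tails of the annuli. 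Your route buys economy by reusing the increasing-union machinery already proved (Lemma \ref{lemmaprobunion0} and Corollary \ref{xcaplimunion}, so there is no circularity); the paper's proof is more self-contained and exhibits the approximating covers of $X_\infty$ explicitly. One small point worth making explicit in your write-up: the closure of $\mathcal{M}_L$ under $A\cap B^c$, which you need for $F_N\in\mathcal{M}_L$, is what the paper's proposition actually proves (its statement's ``$A\cup B^c$'' is a typo), so your appeal to it is legitimate.
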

\begin{proof}
    For each $N\in\N$, define $B_N:=\bigcap\limits_{n=1}^NA_n$ and $X_N:=X\cap B_N$, and define $B_\infty :=\bigcap\limits_{n=1}^\infty A_n$ and $X_\infty :=X\cap B_\infty$. Since $\lim\limits_{N\to\infty}M(B_N)=0$, it follows that
    \[
        M_u(B_N\bkl B_{N+1})=M_u(B_N)-M_u(B_{N+1})\to 0\text{ as }N\to\infty.
    \]
    We have that $X_{N+1}\subseteq X_N\subseteq X_{N+1}\cup(B_N\bkl B_{N+1})$, and hence
    \[
        M_u(X_N)\leq M_u(X_{N+1})+M_u(B_N\bkl B_{N+1}).
    \]
   It follows that the sequence $(X_n)_{n\in\N}$ is Cauchy and therefore convergent in $\sR$. Let $t=\lim\limits_{N\to\infty}X_N$.

    For each $k\in\N$, let $\{J_n^k\}$ be a cover by intervals of $X_k$ such that $\sum\limits_{n=1}^\infty l(J_n^k)-M_u(X_k)<d^k$. Thus, $\{J_n^k\}$ is a sequence of covers of $X_\infty$ satisfying $\lim\limits_{k\to\infty}\sum\limits_{n=1}^\infty l(J_n^k)=t$.

    It remains to show that $t\le\sum\limits_{n=1}^\infty l(I_n)$ for any cover $\{I_n\}$ of $X_\infty$. Suppose, to the contrary, that there exists a cover $\{I_n\}$ of $X_\infty$ such that $\sum\limits_{n=1}^\infty l(I_n)<t$. Since $B_m\bkl B_{m+1}\in\lM$ for each $m\in\N$ and since $\lim\limits_{m\to\infty}M(B_m\bkl B_{m+1})=0$, then there exists some $N\in\N$ such that $\sum\limits_{n=1}^\infty l(I_n)+\sum\limits_{m=N}^\infty M(B_m\bkl B_{m+1})<t$. Thus, we can find covers $\{J_n^m\}$ of $B_m\bkl B_{m+1}$ for each $m\ge N$ so that $\sum\limits_{n=1}^\infty l(I_n)+\sum\limits_{m=N}^\infty \sum\limits_{n=1}^\infty l(J_n^m)<t$, which is a contradiction to the fact that $\{I_n\}\cup\bigcup_{m=N}^\infty\{J_n^m\}$ covers $X_N$.

       We conclude that $X_\infty=X\cap\bigcap\limits_{n=1}^\infty A_n$ is outer measurable and $t:=\lim\limits_{N\to\infty}M_u\left(X_N\right)=M_u\left(X_\infty\right)$. That is,
    \[
    \lim\limits_{N\to\infty}M_u\left(X\cap\bigcap\limits_{n=1}^NA_n\right)=M_u\left(X\cap\bigcap\limits_{n=1}^\infty A_n\right).
\]
\end{proof}
\begin{theorem}
     For each $n\in\setN$ let $A_n\subset\mathcal{R}$ be L-measurable and such that $\lim\limits_{N\to\infty}M\left(\bigcup\limits_{n=1}^N A_n\right)$ exists in $\sR$. Then, $\bigcup\limits_{n=1}^\infty A_n$ is L-measurable.
\end{theorem}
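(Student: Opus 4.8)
The plan is to verify the Carath\'eodory-type condition in the definition of $\lM$ directly. Writing $U:=\bigcup_{n=1}^\infty A_n$ and $U_N:=\bigcup_{n=1}^N A_n$, I must show that for every outer measurable $B\subset\sR$ both $U\cap B$ and $U^c\cap B$ are outer measurable and that $M_u(B)=M_u(U\cap B)+M_u(U^c\cap B)$. First I would record the easy preliminaries: each $U_N$ is a finite union of L-measurable sets and hence lies in $\lM$, and by the preceding corollary $U$ itself is outer measurable with $M_u(U)=\lim_{N\to\infty}M_u(U_N)$. Passing to the disjoint sets $B_n:=U_n\bkl U_{n-1}$ (with $U_0:=\emptyset$), which are again L-measurable and have the same finite and infinite unions, I may also assume the $A_n$ are pairwise disjoint; in any case $M(U_{N+1})-M(U_N)\to 0$ because $\lim_N M(U_N)$ exists.

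The ``positive'' half is immediate from the machinery already in place: applying Corollary \ref{xcaplimunion} with $X=B$ shows that $U\cap B=B\cap\bigcup_{n=1}^\infty A_n$ is outer measurable and that $\lim_{N\to\infty}M_u(U_N\cap B)=M_u(U\cap B)$.

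The crux of the argument---and the step I expect to be the main obstacle---is the complementary half, namely that $U^c\cap B=\bigcap_{N=1}^\infty(B\bkl U_N)$ is outer measurable. One cannot simply feed the sets $U_N^c$ into Lemma \ref{limintersection}, because that lemma requires $\lim_N M\big(\bigcap_{n=1}^N U_n^c\big)=\lim_N M(U_N^c)$ to exist in $\sR$, whereas $U_N^c$ is unbounded and carries no outer measure at all. Instead I would run the continuity-from-above argument of Lemma \ref{limintersection} \emph{relative to} $B$. Set $D_N:=B\bkl U_N=B\cap U_N^c$, which is outer measurable since $U_N\in\lM$ and $B$ is outer measurable, and note that the $D_N$ decrease to $D_\infty:=U^c\cap B$. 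The increments satisfy $D_N\bkl D_{N+1}=B\cap(U_{N+1}\bkl U_N)$, so $M_u(D_N\bkl D_{N+1})\le M(U_{N+1})-M(U_N)\to 0$; hence $M_u(D_N)-M_u(D_{N+1})\to 0$ and, by Corollary \ref{corseqconv}, $t:=\lim_{N\to\infty}M_u(D_N)$ exists. Exactly as in Lemma \ref{limintersection}, choosing for each $k$ a cover $\{J_n^k\}$ of $D_k$ with $\sum_n l(J_n^k)-M_u(D_k)<d^k$ produces covers of $D_\infty$ whose total lengths tend to $t$; and if some cover $\{I_n\}$ of $D_\infty$ had $\sum_n l(I_n)<t$, then refining it by covers of the outer-measurable increments $D_m\bkl D_{m+1}$ (whose outer measures form a null, hence summable, sequence) would yield a cover of some $D_N=D_\infty\cup\bigcup_{m\ge N}(D_m\bkl D_{m+1})$ of total length $<t\le M_u(D_N)$, a contradiction. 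Thus $U^c\cap B$ is outer measurable with $M_u(U^c\cap B)=t$.

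Finally I would assemble the additivity. For each $N$, L-measurability of $U_N$ tested against $B$ gives $M_u(B)=M_u(U_N\cap B)+M_u(D_N)$; letting $N\to\infty$ and using the two limits established above yields $M_u(B)=M_u(U\cap B)+M_u(U^c\cap B)$. Since $B$ was an arbitrary outer measurable set, $U\in\lM$. As an alternative to re-running the argument of Lemma \ref{limintersection}, one may fix an outer measurable---indeed L-measurable---superset $X\supseteq U$ built from covers of the $A_n$, apply Lemma \ref{limintersection} to the L-measurable sets $X\bkl A_n$ to obtain that $B\cap X\cap U^c$ is outer measurable, and then write $U^c\cap B=(B\cap X\cap U^c)\cup(B\cap X^c)$ as a union of two outer measurable sets, which is outer measurable by the theorem that the union of two outer measurable sets is outer measurable; the required additivity then drops out of the L-measurability of $X$.
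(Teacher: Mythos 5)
Your proof is correct, and it reaches the conclusion by a genuinely different (though closely related) route on the one step that matters. Both you and the paper dispose of $U\cap B$ via Corollary \ref{xcaplimunion} and get additivity by passing to the limit in $M_u(B)=M_u(U_N\cap B)+M_u(U_N^c\cap B)$; the divergence is in how each handles $U^c\cap B$, and you correctly identify why Lemma \ref{limintersection} cannot be applied to the sets $U_N^c$ directly. The paper's device is to fix a cover $\{I_n\}$ of the test set and replace $U_N^c$ by the \emph{L-measurable} sets $\bigcup_{n}I_n\bkl U_N$, whose measures converge; Lemma \ref{limintersection} then applies verbatim and yields outer measurability of $X\cap\bigl(\bigcup_n I_n\bkl U\bigr)=X\cap U^c$. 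You instead relativize the lemma: you run its continuity-from-above argument directly on the decreasing sets $D_N=B\bkl U_N$, using $M_u(D_N\bkl D_{N+1})\le M(U_{N+1})-M(U_N)\to 0$ to get convergence of $M_u(D_N)$ via Corollary \ref{corseqconv}, and then the standard two-sided cover argument. This costs you a page of re-proved lemma but avoids the envelope construction and keeps every set explicitly inside $B$; the paper's version is shorter on the page because it reuses Lemma \ref{limintersection} as a black box, at the price of the slightly artificial detour through $\bigcup_n I_n$. Your closing ``alternative'' (an L-measurable superset $X\supseteq U$ built from covers of the increments of the $A_n$) is essentially the paper's trick with the roles of the envelope reversed, and also works. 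Both of your routes are sound at the paper's level of rigor.
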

\begin{proof}
    We already know that $\bigcup\limits_{n=1}^\infty A_n$ is outer measurable. Now let $X\subset\mathcal{R}$ be outer measurable and let $\{I_n\}$ be a cover of $X$. Since $\bigcup\limits_{n=1}^N A_n$ and $\bigcup\limits_{n=1}^\infty I_n\bkl \bigcup\limits_{n=1}^N A_n$ are L-measurable for each $N\in\N$ and since $\lim\limits_{N\to\infty}M\left(\bigcup\limits_{n=1}^N A_n\right)$ and $\lim\limits_{N\to\infty}M\left(\bigcup\limits_{n=1}^\infty I_n\setminus \bigcup\limits_{n=1}^N A_n\right)$ both exist in $\sR$, we obtain that
    \begin{align*}
        M_u(X)&=\lim\limits_{N\to\infty}M_u(X)\\
        &=\lim\limits_{N\to\infty}\left[M_u\left(X\cap\bigcup\limits_{n=1}^N A_n\right)+M_u\left(X\bkl\bigcup\limits_{n=1}^N A_n\right)\right]\\
        &=\lim\limits_{N\to\infty}\left[M_u\left(X\cap\bigcup\limits_{n=1}^N A_n\right)+M_u\left(X\cap\left(\bigcup\limits_{n=1}^\infty I_n\bkl\bigcup\limits_{n=1}^N A_n\right)\right)\right]\\
       &=\lim\limits_{N\to\infty}M_u\left(X\cap\bigcup\limits_{n=1}^N A_n\right)+\lim\limits_{N\to\infty}M_u\left(X\cap\left(\bigcup\limits_{n=1}^\infty I_n\bkl\bigcup\limits_{n=1}^N A_n\right)\right)\\
        &=M_u\left(X\cap\bigcup\limits_{n=1}^\infty A_n\right)+M_u\left(X\cap\left(\bigcup\limits_{n=1}^\infty I_n\bkl\bigcup\limits_{n=1}^\infty A_n\right)\right)\\
  &=M_u\left(X\cap\bigcup\limits_{n=1}^\infty A_n\right)+M_u\left(X\cap\left(\bigcup\limits_{n=1}^\infty A_n\right)^c\right).
    \end{align*}
    Thus, $\bigcup\limits_{n=1}^\infty A_n$ is L-measurable.
\end{proof}
\begin{theorem}
     For each $n\in\setN$ let $A_n\subset\mathcal{R}$ be L-measurable and such that $\lim\limits_{N\to\infty}M\left(\bigcap\limits_{n=1}^N A_n\right)$ exists in $\sR$. Then, $\bigcap\limits_{n=1}^\infty A_n$ is L-measurable.
\end{theorem}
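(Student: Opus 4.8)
The plan is to mirror the proof of the preceding theorem (the union case), interchanging the roles of unions and intersections and correspondingly swapping the two continuity results, Lemma \ref{limintersection} and Corollary \ref{xcaplimunion}. Write $B_N:=\bigcap_{n=1}^N A_n$ and $B_\infty:=\bigcap_{n=1}^\infty A_n$. Each $B_N$ is L-measurable, being a finite intersection of L-measurable sets, and $B_{N+1}\subseteq B_N$ with $\lim_{N\to\infty}M(B_N)$ existing by hypothesis. First I would establish that $B_\infty$ is outer measurable: applying Lemma \ref{limintersection} with $X=A_1$ (which is outer measurable, being L-measurable) gives that $A_1\cap B_\infty=B_\infty$ is outer measurable.

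Now fix an arbitrary outer measurable set $X\subset\sR$; the goal is to verify $M_u(X)=M_u(X\cap B_\infty)+M_u(X\cap B_\infty^c)$. Since each $B_N\in\lM$, we have $M_u(X)=M_u(X\cap B_N)+M_u(X\cap B_N^c)$ for every $N$, and it suffices to pass to the limit in $N$ on each summand. The first summand is immediate: by Lemma \ref{limintersection}, $\lim_{N\to\infty}M_u(X\cap B_N)=M_u(X\cap B_\infty)$.

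The main obstacle is the second summand, $M_u(X\cap B_N^c)$, since $B_N^c=\bigcup_{n=1}^N A_n^c$ involves the complements $A_n^c$, which are typically neither bounded nor outer measurable, so I cannot feed them directly into Corollary \ref{xcaplimunion}. The remedy, exactly as in the union case, is to confine everything to a cover. Using the Proposition following the definition of the outer measure, choose a sequence of pairwise disjoint interval covers converging to $X$ and let $W$ be the union of the intervals in one such cover; then $W\supseteq X$, the interval lengths form a null sequence, and $W\in\lM$ by the proposition on countable disjoint unions of intervals with lengths tending to $0$. Because $X\subseteq W$, I may rewrite $X\cap B_N^c=X\cap(W\setminus B_N)=X\cap\bigcup_{n=1}^N(W\cap A_n^c)$, and each $D_n:=W\cap A_n^c$ lies in $\lM$ since $W,A_n\in\lM$. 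The sets $\bigcup_{n=1}^N D_n=W\setminus B_N$ increase with $N$, and by finite additivity their L-measures satisfy $M\big(\bigcup_{n=1}^N D_n\big)=M(W)-M(W\cap B_N)$; since $\lim_{N\to\infty}M(W\cap B_N)$ exists by Lemma \ref{limintersection} applied with $X=W$, the limit $\lim_{N\to\infty}M\big(\bigcup_{n=1}^N D_n\big)$ exists. Corollary \ref{xcaplimunion} then yields $\lim_{N\to\infty}M_u\big(X\cap\bigcup_{n=1}^N D_n\big)=M_u\big(X\cap\bigcup_{n=1}^\infty D_n\big)$, and since $X\subseteq W$ and $\bigcup_{n=1}^\infty D_n=W\cap B_\infty^c$ this says precisely $\lim_{N\to\infty}M_u(X\cap B_N^c)=M_u(X\cap B_\infty^c)$.

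Combining the two limits gives $M_u(X)=M_u(X\cap B_\infty)+M_u(X\cap B_\infty^c)$ for every outer measurable $X$, so $B_\infty=\bigcap_{n=1}^\infty A_n$ is L-measurable. The only genuinely delicate point is ensuring that the complementary part stays inside the outer-measurable world; this is handled by the substitution $X\setminus B_N=X\cap(W\setminus B_N)$, after which the convergence is governed by the union continuity result rather than by any direct appeal to the non-outer-measurable complements $A_n^c$.
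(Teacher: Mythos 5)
Your proposal is correct and follows essentially the same route as the paper: both replace the troublesome complement $X\cap B_N^c$ by $X\cap(W\setminus B_N)$ where $W$ is the union of an interval cover of $X$, and then pass to the limit using Lemma \ref{limintersection} for the intersection term and Corollary \ref{xcaplimunion} for the increasing union $\bigcup_{n=1}^N(W\cap A_n^c)$. Your only addition is that you explicitly justify, via finite additivity and Lemma \ref{limintersection} applied with $X=W$, why $\lim_{N\to\infty}M\bigl(W\setminus B_N\bigr)$ exists --- a hypothesis the paper's proof asserts without comment.
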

\begin{proof}
  We already know that $\bigcap\limits_{n=1}^\infty A_n$ is outer measurable. Now let $X\subset\mathcal{R}$ be outer measurable and let $\{I_n\}$ be a cover of $X$. Since $\bigcap\limits_{n=1}^N A_n$ and $\bigcup\limits_{n=1}^\infty I_n\bkl \bigcap\limits_{n=1}^N A_n$ are L-measurable for each $N\in\N$ and since $\lim\limits_{N\to\infty}M\left(\bigcap\limits_{n=1}^N A_n\right)$ and $\lim\limits_{N\to\infty}M\left(\bigcup\limits_{n=1}^\infty I_n\setminus \bigcap\limits_{n=1}^N A_n\right)$ both exist in $\sR$, we obtain that
    \begin{align*}
        M_u(X)&=\lim\limits_{N\to\infty}M_u(X)\\
        &=\lim\limits_{N\to\infty}\left[M_u\left(X\cap\bigcap\limits_{n=1}^N A_n\right)+M_u\left(X\bkl\bigcap\limits_{n=1}^N A_n\right)\right]\\
        &=\lim\limits_{N\to\infty}\left[M_u\left(X\cap\bigcap\limits_{n=1}^N A_n\right)+M_u\left(X\cap\left(\bigcup\limits_{n=1}^\infty I_n\bkl\bigcap\limits_{n=1}^N A_n\right)\right)\right]\\
       &=\lim\limits_{N\to\infty}M_u\left(X\cap\bigcap\limits_{n=1}^N A_n\right)+\lim\limits_{N\to\infty}M_u\left(X\cap\left(\bigcup\limits_{n=1}^\infty I_n\bkl\bigcap\limits_{n=1}^N A_n\right)\right)\\
        &=M_u\left(X\cap\bigcap\limits_{n=1}^\infty A_n\right)+M_u\left(X\cap\left(\bigcup\limits_{n=1}^\infty I_n\bkl\bigcap\limits_{n=1}^\infty A_n\right)\right)\\
  &=M_u\left(X\cap\bigcap\limits_{n=1}^\infty A_n\right)+M_u\left(X\cap\left(\bigcap\limits_{n=1}^\infty A_n\right)^c\right).
    \end{align*}
    Thus, $\bigcap\limits_{n=1}^\infty A_n$ is L-measurable.
\end{proof}
The converse of the results above
do not hold in general due to the extremely strong criteria for convergence in $\sR$.
\begin{eg}
    For each $n\in\N$, let $A_n=(d^{1/n},1/n)$. One can easily check that
$\bigcap\limits_{n=1}^\infty A_n=\emptyset$
    which is L-measurable and has L-measure zero. However,
    \[
    \lim\limits_{N\to\infty}M\left(\bigcap\limits_{n=1}^N A_n\right)=\lim\limits_{N\to\infty}M\left(\bigcap\limits_{n=1}^N \left(d^{1/n},1/n\right)\right)=\lim\limits_{N\to\infty}M\left(d^{1/N},1/N\right
    )=\lim\limits_{N\to\infty}\left(\dfrac{1}{N}-d^{1/N}\right)
    \]
     does not exist in $\sR$.

    Similarly, if, for each $n\in\N$, we let $B_n=[0,1-1/n]\cup[1-d^{1/n},1]$ then it's easy to check that
$\bigcup\limits_{n=1}^\infty B_n=[0,1]$ which is L-measurable and of L-measure 1. However,
    \begin{align*}
    \lim\limits_{N\to\infty}M\left(\bigcup\limits_{n=1}^N B_n\right)&=\lim\limits_{N\to\infty}M\left(\bigcup\limits_{n=1}^N \left([0,1-1/n]\cup[1-d^{1/n},1]\right)\right)\\
    &=\lim\limits_{N\to\infty}M\left(\left([0,1-1/N]\cup[1-d^{1/N},1]\right)\right)\\
    & =\lim\limits_{N\to\infty}\left(1-\dfrac{1}{N}+d^{1/N}\right)
    \end{align*}
     does not exist in $\sR$.
\end{eg}


\begin{thebibliography}{10}

\bibitem{rscrptsf}
M.~Berz.
\newblock Calculus and numerics on {L}evi-{C}ivita fields.
\newblock In M.~Berz, C.~Bischof, G.~Corliss, and A.~Griewank, editors, {\em
  Computational Differentiation: Techniques, Applications, and Tools}, pages
  19--35, Philadelphia, 1996. SIAM.

\bibitem{barria-sham-18}
A.~Barr\'{i}a Comicheo and K.~Shamseddine.
\newblock Summary on non-archimedean valued fields.
\newblock {\em Contemp. Math.}, 704:1--36, 2018.

\bibitem{krull32}
W.~Krull.
\newblock Allgemeine {B}ewertungstheorie.
\newblock {\em J. Reine Angew. Math.}, 167:160--196, 1932.

\bibitem{priessbook}
S.~Priess-Crampe.
\newblock {\em Angeordnete {S}trukturen: {G}ruppen, {K\"orper}, projektive
  {E}benen}.
\newblock Springer, Berlin, 1983.

\bibitem{ribenboim92}
P.~Ribenboim.
\newblock Fields: {A}lgebraically {C}losed and {O}thers.
\newblock {\em Manuscripta Mathematica}, 75:115--150, 1992.

\bibitem{schikhofbook}
W.~H. Schikhof.
\newblock {\em Ultrametric {C}alculus: {A}n {I}ntroduction to {p}-{A}dic
  {A}nalysis}.
\newblock Cambridge University Press, 1985.

\bibitem{shamseddinephd}
K.~Shamseddine.
\newblock {\em New Elements of Analysis on the {L}evi-{C}ivita Field}.
\newblock PhD thesis, {M}ichigan {S}tate {U}niversity, East Lansing,
  {M}ichigan, {USA}, 1999.
\newblock also Michigan State University report MSUCL-1147.

\bibitem{rspssurv13}
K.~Shamseddine.
\newblock A brief survey of the study of power series and analytic functions on
  the {L}evi-{C}ivita fields.
\newblock {\em Contemp. Math.}, 596:269--280, 2013.

\bibitem{rsint2}
K.~Shamseddine.
\newblock New results on integration on the {L}evi-{C}ivita field.
\newblock {\em Indag. Math. (N.S.)}, 24(1):199--211, 2013.

\bibitem{rsrevitaly13}
K.~Shamseddine.
\newblock Analysis on the {L}evi-{C}ivita field and computational applications.
\newblock {\em Journal of Applied Mathematics and Computation}, 255:44--57,
  2015.

\bibitem{rsdiffsf}
K.~Shamseddine and M.~Berz.
\newblock Exception handling in derivative computation with non-{A}rchimedean
  calculus.
\newblock In {\em Computational Differentiation: Techniques, Applications, and
  Tools}, pages 37--51, Philadelphia, 1996. SIAM.

\bibitem{rspsio00}
K.~Shamseddine and M.~Berz.
\newblock Convergence on the {L}evi-{C}ivita field and study of power series.
\newblock In {\em Proc. Sixth International Conference on $p$-adic Functional
  Analysis}, pages 283--299, New York, NY, 2000. Marcel Dekker.

\bibitem{RSMNi02}
K.~Shamseddine and M.~Berz.
\newblock Measure theory and integration on the {L}evi-{C}ivita field.
\newblock {\em Contemp. Math.}, 319:369--387, 2003.

\bibitem{rspascal05}
K.~Shamseddine and M.~Berz.
\newblock Analytical properties of power series on {L}evi-{C}ivita fields.
\newblock {\em Ann. Math. Blaise Pascal}, 12(2):309--329, 2005.

\end{thebibliography}

\end{document}